\documentclass[letterpaper, 11pt]{article}
\usepackage[utf8]{inputenc}
\usepackage[T1]{fontenc}
\usepackage{lmodern}
\usepackage{graphicx,amsmath,amsfonts,latexsym,amsthm,amssymb,mathrsfs,mathtools,stmaryrd}
\usepackage[abs]{overpic}
\usepackage[dvipsnames]{xcolor}
\usepackage[linktocpage=true]{hyperref}
\usepackage{caption}
\usepackage{dsfont}
\usepackage{tikz-cd}
\usepackage{comment}
\usepackage{enumitem}
\usepackage{microtype}
\numberwithin{equation}{section}

\usepackage{graphpap}
\usepackage{svg}
\setlength{\unitlength}{1mm}
\tikzset{
  symbol/.style={
    draw=none,
    every to/.append style={
      edge node={node [sloped, allow upside down, auto=false]{$#1$}}}
  }
}

\definecolor{green1}{rgb}{0.,0.4,0.}
\definecolor{blue1}{rgb}{0.,0.,0.8}
\definecolor{red1}{rgb}{0.8,0.,0.}

\usepackage[margin=1.25in]{geometry}


\usepackage[backend=biber, maxnames=4, style=alphabetic, sorting=nyt]{biblatex}
\addbibresource{bibli.bib}


\hypersetup{
  colorlinks,
  citecolor=teal,
  linkcolor=purple,
  urlcolor=teal}
  

\newtheorem{thm}{Theorem}[section]
\newtheorem{cor}[thm]{Corollary}
\newtheorem{lem}[thm]{Lemma}
\newtheorem{question}[thm]{Question}
\newtheorem{prop}[thm]{Proposition}
\newtheorem{conj}[thm]{Conjecture}

\newtheorem{defn}[thm]{Definition}
\newtheorem{ex}[thm]{Example}
\newtheorem{rem}[thm]{Remark}

\newtheorem*{thm*}{Theorem}
\newtheorem*{cor*}{Corollary}


\newcommand{\Z}{\mathbb{Z}}
\newcommand{\R}{\mathbb{R}}


\title{A symplectic viewpoint on Anosov flows}

\author{Thomas Massoni\thanks{Department of Mathematics, MIT, Cambridge, USA. Email address: \href{mailto:thomas10@mit.edu}{thomas10@mit.edu}}}

\date{\today}


\begin{document}

\maketitle

\begin{abstract}
This survey explores the geometry of three-dimensional Anosov flows from the perspective of contact and symplectic geometry, following the work of Mitsumatsu, Eliashberg--Thurston, Hozoori, and the author. We also present a few original results and discuss various open questions and conjectures.
\end{abstract}

\tableofcontents

\section{Introduction}

Anosov flows are fundamental objects in smooth hyperbolic dynamics. They were originally introduced by Anosov~\cite{A63, A67} as a generalization of geodesic flows on hyperbolic manifolds. The study of Anosov flows in dimension three is particularly rich, as their dynamical and topological properties exhibit deep and intricate interactions. For an overview of classical results and references on Anosov flows, see the nice survey of Barthelm\'{e}~\cite{B17}. A more comprehensive exposition can be found in the monograph of Fisher and Hasselblatt~\cite{FH19}. 

A significant connection to contact and symplectic geometry was obtained independently by Mitsumatsu~\cite{M95} and Eliashberg and Thurston~\cite{ET}, who introduced the concept of \emph{projectively/conformally Anosov flow} on $3$-manifolds. They established a correspondence between such flows and \emph{bicontact structures}, defined as transverse pairs of contact structures with opposite orientations. More recently, Hozoori~\cite{Hoz24} strengthened this connection by showing that (oriented) Anosov flows can be completely characterized in terms of bicontact geometry. Building on these results, the author refined the correspondence in~\cite{Mas23}, proving that the space of Anosov flows is \emph{homotopy equivalent} to a space of suitable bicontact structures. Some aspects of this correspondence can be generalized to the study of foliations~\cite{Mas24}. 

A remarkable feature of Mitsumatsu’s construction is that it naturally gives rise to a \emph{Liouville structure} on the `thickening' $[-1,1] \times M$ of the $3$-manifold $M$ on which the Anosov flow lives.  This Liouville structure is \emph{not Weinstein} but possesses a rich geometry mirroring the dynamics of the Anosov flow. The symplectic invariants of such non-Weinstein Liouville domains were recently investigated by the author in joint work with Cieliebak, Lazarev, and Moreno~\cite{CLMM}. In work in preparation with Bowden~\cite{BM}, we further show that these Liouville structures are preserved under \emph{orbit equivalence} of Anosov flows, and so are their symplectic and Floer-theoretical invariants.

\medskip

This article surveys the geometric interactions between (projectively) Anosov flows and bicontact structures, focusing on the work of Mitsumatsu, Eliashberg--Thurston, Hozoori, and the author. This article also contains two original contributions: Theorem~\ref{thm:volumepres} which provides a symplectic characterization of volume preserving Anosov flows generalizing~\cite[Theorem 3.15]{Mas23}, and Section~\ref{sec:gAL} on a natural generalization of the notion of Anosov Liouville structure. We also discuss various open questions and conjectures.

\medskip

\begin{center}
\emph{Throughout this article, $M$ denotes a connected, closed, oriented, smooth $3$-manifold.}
\end{center}

\medskip

\paragraph{Acknowledgments.} I would like to warmly thank the organizers and participants of the workshop \emph{Symplectic Geometry and Anosov Flows} held in Heidelberg in July 2024, for creating such a stimulating and inspiring atmosphere. I am especially grateful to Surena Hozoori, Jonathan Bowden, and Jonathan Zung, for teaching me most of what I know about Anosov flows. I am grateful to the anonymous referee for their comments and suggestions.

\section{Anosov and projectively Anosov flows}

In this section, we recall some basic definitions for Anosov flows and their associated foliations. We also discuss some weaker notions (projectively Anosov, semi-Anosov). For the purpose of constructing contact and Liouville structures, it will be convenient to rephrase those definitions in terms of suitable $1$-forms.

        \subsection{Definitions}

Let $\Phi = \{\phi^t\}_{t \in \R}$ be a flow on $M$ generated by a nonsingular $\mathcal{C}^1$ vector field $X$.

\begin{defn} \label{def:anosov}
$\Phi$ is \textbf{Anosov} if there exists a continuous invariant \textbf{hyperbolic splitting}
\begin{align} \label{anosovsplit}
TM = \langle X \rangle \oplus E^s \oplus E^u
\end{align}
where $E^s, E^u$ are $1$-dimensional bundles such that for some (any) Riemannian metric $g$ on $M$, there exist constants $C, a >0$ such that for all $v \in E^s$ and $t \geq 0$, $$\Vert d\phi^t(v)\Vert \leq C e^{-at} \Vert v \Vert,$$
and for all $v \in E^u$ and $t \geq 0$, $$\Vert d\phi^t(v)\Vert \geq C e^{at} \Vert v \Vert.$$
The line bundles $E^s$ and $E^u$ are called the (strong) stable and unstable bundles of $\Phi$, respectively.
\end{defn}

A weaker notion was introduced by Mitsumatsu~\cite{M95} under the name of \emph{projectively} Anosov flow, and independently by Eliashberg--Thurston~\cite{ET} under the name of \emph{conformally} Anosov flow:

\begin{defn} \label{def:projanosov} 
$\Phi$ is \textbf{projectively Anosov} if there exists a continuous invariant splitting
\begin{align} \label{projanosovsplit}
TM \slash \langle X \rangle = N_X = \overline{E}^s \oplus \overline{E}^u
\end{align}
where $\overline{E}^s, \overline{E}^u$ are $1$-dimensional bundles such that for some (any) Riemannian metric $\overline{g}$ on $N_X$, there exist constants $C, a >0$ such that for all unit vectors $v_s \in \overline{E}^s, v_u \in \overline{E}^u$, and all $t \geq 0$, $$ \Vert d\phi^t(v_u)\Vert \geq C e^{at} \, \Vert d\phi^t(v_s) \Vert.$$
Such a splitting is called a \textbf{dominated splitting}. We denote by $E^{ws} \coloneqq \pi^{-1}(\overline{E}^s)$ and $E^{wu} \coloneqq \pi^{-1}(\overline{E}^u)$ the weak stable and weak unstable bundles of $\Phi$, respectively.
\end{defn}

It is easy to see that Anosov flows are projectively Anosov, with dominated splitting $N_X = \pi(E^s) \oplus \pi(E^u)$. From the perspective of symplectic geometry, it will be relevant to introduce another type of flows whose behavior interpolates between projectively Anosov and Anosov flows. Those were already mentioned in~\cite{Mas24}, and in~\cite{Hoz24-reg} under the name of \emph{partially hyperbolic} flows.

\begin{defn}
    $\Phi$ is \textbf{semi-Anosov} if it is projectively Anosov, and there exists a continuous invariant splitting
    \begin{align}
    TM = E^{ws} \oplus E^u \label{seminanosovsplit}
    \end{align}
    where $E^{ws}$ is as in Definition~\ref{def:projanosov}, and $E^u$ is a $1$-dimensional bundle, such that for some (any) Riemannian metric $g$ on $M$, there exist constants $C,a > 0$ such that for all $v \in E^u$ and $t \geq 0$,
    $$\Vert d\phi^t(v)\Vert \geq C e^{at} \Vert v \Vert.$$
    We call $E^u$ the (strong) unstable bundle of $\Phi$.
\end{defn}

In other words, a semi-Anosov flow is a projectively Anosov flows that admits a \emph{strong} unstable bundle, but not necessarily a strong stable one.

In all those definitions, if the constant $C$ can be chosen to be $1$, the corresponding metrics $g$ and $\overline{g}$ are called \textbf{adapted} to $\Phi$. Is is well-known that every three-dimensional (projectively) Anosov flow admits a smooth adapted metric, and the same holds for semi-Anosov flows (see~\cite[Proposition 5.1.5]{FH19} for a proof).

An Anosov (resp.~projectively Anosov) flow $\Phi$ is \textbf{oriented} if $E^s$ and $E^u$ are oriented (resp.~$\overline{E}^s$ and $\overline{E}^u$ are oriented) and their orientations are compatible with the splitting~\eqref{anosovsplit} (resp.~the splitting~\eqref{projanosovsplit}). A semi-Anosov flow $\Phi$ is oriented if it is oriented as a projectively Anosov flow; a natural orientation on $E^u$ is induced by~\eqref{seminanosovsplit}.

The following lemma is an easy consequence of the definition and the independence on the choice of metrics. The proof is left as an exercise to the reader.

\begin{lem} \label{lem:semianosov}
    A projectively Anosov flow $\Phi$ is Anosov if and only if $\Phi$ and $\Phi^{-1}$ are semi-Anosov.
\end{lem}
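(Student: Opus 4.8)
The plan is to prove both implications by unwinding the definitions, the only substantive content lying in the reverse implication, where the strong unstable bundles of $\Phi$ and of $\Phi^{-1}$ must be assembled into a genuine hyperbolic splitting \eqref{anosovsplit}. Throughout I write $\pi\colon TM\to N_X$ for the canonical projection, and I use freely that a line bundle is flow-invariant for $\Phi$ if and only if it is flow-invariant for $\Phi^{-1}$, since $d\phi^t$ and $d\phi^{-t}$ are mutually inverse.

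For the forward implication, suppose $\Phi$ is Anosov. Then $\Phi$ is projectively Anosov with $\overline{E}^s=\pi(E^s)$ and $\overline{E}^u=\pi(E^u)$, so $E^{ws}=\langle X\rangle\oplus E^s$ and the Anosov splitting \eqref{anosovsplit} rewrites as $TM=E^{ws}\oplus E^u$ with $E^u$ expanding; hence $\Phi$ is semi-Anosov. Reversing time exchanges contraction and expansion, so $\Phi^{-1}$ is again Anosov, now with strong stable bundle $E^u$ and strong unstable bundle $E^s$; applying the previous observation to $\Phi^{-1}$ shows that $\Phi^{-1}$ is semi-Anosov as well.

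For the reverse implication, assume $\Phi$ and $\Phi^{-1}$ are semi-Anosov. Let $E^u$ be the strong unstable bundle of $\Phi$ and let $F$ be the strong unstable bundle of $\Phi^{-1}$. First I would reinterpret $F$ dynamically for $\Phi$: applying the expansion estimate for $\Phi^{-1}$ to the vector $d\phi^t v\in F$ and using $d\phi^{-t}\circ d\phi^t=\mathrm{id}$, one gets $\Vert d\phi^t v\Vert\le C^{-1}e^{-at}\Vert v\Vert$ for $v\in F$ and $t\ge 0$, so that $F$ is a contracting, invariant line bundle for $\Phi$. The goal is then to verify that $TM=\langle X\rangle\oplus F\oplus E^u$, after which setting $E^s:=F$ exhibits the hyperbolic splitting \eqref{anosovsplit} and proves that $\Phi$ is Anosov. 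Since all three line bundles are invariant, continuous, and satisfy the required contraction/expansion estimates by construction, the only remaining point is that the sum is direct.

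I expect the transversality of the summands to be the main (and essentially only) obstacle. The restrictions $\pi|_F$ and $\pi|_{E^u}$ are injective, since neither $F$ nor $E^u$ can equal $\langle X\rangle=\ker\pi$: the norm $\Vert d\phi^t X\Vert=\Vert X_{\phi^t(\cdot)}\Vert$ stays bounded above and below on the compact manifold $M$ (as $X$ is nonsingular), while vectors in $F$ contract and vectors in $E^u$ expand. The crux is then that $\pi(F)\neq\pi(E^u)$ in $N_X$, which I would prove by contradiction: if $\pi(F)=\pi(E^u)=:L$, then $F\subset\pi^{-1}(L)=\langle X\rangle\oplus E^u$, so any unit $w\in F$ writes as $w=bX+u$ with $u\in E^u$, whence $\Vert d\phi^t w\Vert\ge \Vert d\phi^t u\Vert-|b|\,\Vert X_{\phi^t(\cdot)}\Vert$; because $\Vert X\Vert$ is bounded while $\Vert d\phi^t u\Vert\to\infty$ when $u\neq 0$, the norm $\Vert d\phi^t w\Vert$ cannot decay, contradicting the contraction of $F$ (the case $u=0$ would force $F=\langle X\rangle$, already excluded). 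With $\pi|_F$ and $\pi|_{E^u}$ injective and $\pi(F),\pi(E^u)$ distinct lines spanning $N_X$, applying $\pi$ to a relation $aX+f+u=0$ with $f\in F$, $u\in E^u$ forces $\pi(f)=\pi(u)=0$, hence $f=u=0$ and then $a=0$. This yields the direct sum and completes the proof.
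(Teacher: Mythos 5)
Your proof is correct; the paper leaves this lemma as an exercise ("an easy consequence of the definition"), and your argument is exactly the intended elementary one: time reversal turns the strong unstable bundle $F$ of $\Phi^{-1}$ into a contracting invariant line bundle for $\Phi$, and the contraction/expansion estimates force $\langle X\rangle$, $F$, and $E^u$ to be pointwise transverse, yielding the hyperbolic splitting. The only step that genuinely requires an argument---that $\pi(F)\neq\pi(E^u)$---is the one you isolate, and your treatment of it via the boundedness of $\Vert X\Vert$ on the compact manifold is sound.
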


\begin{center}
\noindent\fbox{%
\parbox{\linewidth - 3\fboxsep}{%
\textbf{From now on, we assume that all of the Anosov and projectively Anosov flows under consideration are \underline{smooth} and \underline{oriented}.}}}
\end{center}

Not every (projectively) Anosov flow is oriented (or rather orientable), but it becomes orientable after passing to a suitable double cover.

        \subsection{Basic properties}

\paragraph{First examples.} Le us briefly recall the most elementary examples of Anosov flows in dimension three.
\begin{itemize}[leftmargin=*]
    \item \textit{Suspension of linear hyperbolic diffeomorphism of the torus.} Let $A \in \mathrm{SL}(2, \Z)$ be hyperbolic, i.e., $\vert\mathrm{tr}(A) \vert> 2$. It naturally induces a diffeomorphism of $\mathbb{T}^2 \coloneqq \R^2 \slash \Z^2$, still denoted by $A$, which has two invariant linear foliations parallel to the eigenvectors of $A$. Note that the two (real) eigenvalues of $A$ are of the form $e^\nu$ and $e^{-\nu}$ if $\mathrm{tr}(A) > 0$, and $-e^\nu$ and $-e^{-\nu}$ if $\mathrm{tr}(A) < 0$, for some $\nu \in \R_{> 0}$. On the suspension 
    $$M_A \coloneqq [0,1]_t \times \mathbb{T}^2 \slash_{(1,x) \sim (0,Ax)}$$
    of $A$, the suspension flow $\Phi$ generated by $\partial_t$ has first return map $A$. It is easy to check that it is Anosov, and it is orientable exactly when $\mathrm{tr}(A) > 0$.
    
    \item \textit{Geodesic flows on hyperbolic surfaces.} Let $(\Sigma, g)$ be a closed oriented surface endowed with a hyperbolic metric $g$. Then, the geodesic flow of $g$, viewed as a flow on the unit tangent bundle $UT\Sigma$ of $\Sigma$, is Anosov and orientable. This can be seen by realizing $UT\Sigma$ as a quotient of $U \mathbb{H}$, the unit tangent bundle of the hyperbolic half-plane, by a subgroup of $\mathrm{SL}(2, \R)$. There is a well-known equivariant diffeomorphism $U \mathbb{H} \cong \mathrm{PSL}(2, \R)$ under which the geodesic flow is generated by the matrix
    $$\begin{pmatrix}
        \frac{1}{2} & 0 \\
        0 & -\frac{1}{2}
    \end{pmatrix} \in \mathfrak{sl}(2, \R),$$
    viewed as an invariant vector field on $\mathrm{PSL}(2, \R)$. The strong stable and unstable bundles can also be described explicitly in terms of matrices in $\mathfrak{sl}(2, \R)$, see~\cite{Gei}.
\end{itemize}

Note that in both examples, the Anosov splitting is \emph{smooth}. This is a special feature of \textbf{algebraic} Anosov flows, which exactly correspond to (manifold quotients of) the two previous examples in dimension three.

\paragraph{Structural stability.} 

Recall the natural equivalence relation on flows.

\begin{defn} Two flows $\Phi = \{ \phi^t\}$ and $\Psi= \{ \psi^t\}$ on $M$ are \textbf{topologically equivalent}, or \textbf{orbit equivalent}, if there exist a homeomorphism $h: M \rightarrow M$ and a continuous map $\tau : \R \times M \rightarrow \R$ such that $\tau(t,p) \geq 0$ for $t \geq 0$, and $$\psi^{\tau(t,p)} = h \circ \phi^t \circ h^{-1}(p)$$ for all $t \in \R$ and $p \in M$.
\end{defn}

In other words, the topological equivalence $h$ sends the oriented trajectories of $\Phi$ onto the oriented trajectories of $\Psi$, but does not necessarily preserve their parametrization. 

A key feature of Anosov flows is their \textbf{structural stability}:

\begin{thm*}
    Let $\Phi$ be an Anosov flow generated by a $\mathcal{C}^1$ vector field $X$. There exists a $\mathcal{C}^1$-neighborhood $\mathcal{U}$ of $X$ in the space of $\mathcal{C}^1$ vector field such that every $X' \in \mathcal{U}$ generates an Anosov flow which is orbit equivalent to $\Phi$.
\end{thm*}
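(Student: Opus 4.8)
The plan is to prove structural stability via the standard contraction-mapping argument on the space of perturbed invariant bundles, following the Hadamard--Perron / graph transform approach. The key objects are the stable and unstable subbundles, viewed as sections of a Grassmannian bundle over $M$; the goal is to show that a $\mathcal{C}^1$-small perturbation $X'$ of $X$ still admits an invariant hyperbolic splitting close to the original one, and then to upgrade the existence of this splitting to an orbit equivalence. I would organize the argument in two main stages: first establishing persistence of the hyperbolic structure, and second constructing the conjugating homeomorphism.

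First I would set up the persistence of the splitting. Fix an adapted metric $g$ as guaranteed after the definitions, so that along $\Phi$ the constant $C$ equals $1$ and the contraction/expansion is strict at every time. The hyperbolic splitting $TM = \langle X \rangle \oplus E^s \oplus E^u$ endows the time-$t$ linearized flow with a uniform cone-field structure: there are stable and unstable cones $\mathcal{C}^s, \mathcal{C}^u$ around $E^s, E^u$ that are strictly invariant and eventually contracted/expanded under $d\phi^t$. Since hyperbolicity and the cone conditions are open in the $\mathcal{C}^1$ topology on vector fields (they are finitely many strict inequalities on derivatives evaluated over the compact manifold $M$), any $X'$ in a small enough $\mathcal{C}^1$-neighborhood $\mathcal{U}$ preserves the same cone fields with slightly weakened but still positive rates. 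The classical cone criterion then yields a continuous invariant hyperbolic splitting $TM = \langle X' \rangle \oplus E'^s \oplus E'^u$ for the flow of $X'$, so $X'$ is again Anosov. Concretely, one realizes $E'^u$ as the intersection $\bigcap_{t \geq 0} d\phi'^t(\mathcal{C}^u)$ of forward-iterated cones and $E'^s$ symmetrically with backward iterates; the contraction in the Grassmannian fibers makes this intersection a single line depending continuously on the base point.

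Next I would construct the orbit equivalence. The standard route is the \emph{shadowing / structural stability lemma}: because $\Phi$ is Anosov, it has the shadowing property with uniform constants, and these constants are stable under $\mathcal{C}^1$-perturbation. One then shows that every orbit of $\Phi'$ is uniformly shadowed by a unique orbit of $\Phi$, and conversely, which defines a map $h$ sending trajectories to trajectories. The precise implementation is to fix a point $p$ and look for the unique point $h(p)$ whose $\Phi$-orbit stays uniformly close to the $\Phi'$-orbit of $p$; uniqueness comes from expansivity of Anosov flows, and continuity from the uniform estimates. To produce a genuine homeomorphism respecting orientations of orbits one reparametrizes time via the function $\tau(t,p)$, absorbing the fact that $h$ need not preserve the flow parameter --- this is exactly the data in the definition of orbit equivalence. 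Running the symmetric construction with the roles of $\Phi$ and $\Phi'$ exchanged produces a continuous inverse, so $h$ is a homeomorphism.

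The main obstacle, and the part requiring the most care, is the second stage: establishing the existence and uniqueness of the shadowing orbit and verifying that the resulting $h$ is a homeomorphism rather than merely a continuous surjection. The subtlety is that Anosov \emph{flows} (as opposed to diffeomorphisms) shadow only up to time reparametrization, so one must work with the flow's orbit foliation and handle the neutral direction $\langle X \rangle$ separately from the hyperbolic directions; naive shadowing in the strong sense fails precisely because of this one-dimensional center. The standard fix is to set up the fixed-point problem in a space of reparametrized orbits, or equivalently to apply the implicit function theorem to the section-valued operator whose zeros correspond to shadowing orbits, using the hyperbolic splitting to guarantee invertibility of the linearization transverse to the flow. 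I would not grind through these estimates here but would cite the uniform hyperbolicity from the first stage as the input that makes the relevant linear operator boundedly invertible with norms controlled uniformly over $\mathcal{U}$, which is what makes $h$ and $h^{-1}$ continuous.
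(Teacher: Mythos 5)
The paper does not prove this statement at all: it is quoted as a classical theorem of Anosov, with the reader referred to \cite{A63, A67} and to Fisher--Hasselblatt \cite{FH19} for a modern treatment. So there is no in-paper argument to compare against; what you have written is an outline of the standard textbook proof, and as an outline it is sound. Stage one (openness of the cone conditions in the $\mathcal{C}^1$ topology over a compact manifold, followed by the cone criterion to recover a continuous invariant hyperbolic splitting for $X'$) is correct and complete in spirit. Stage two is where essentially all of the real work in the classical proof lives, and your sketch correctly identifies the central difficulty --- that shadowing for flows only holds up to time reparametrization because of the neutral direction $\langle X \rangle$ --- but defers it. To be a proof rather than a roadmap you would need to actually set up and solve the fixed-point problem transverse to the flow (e.g.\ the Moser-type functional equation $h \circ \phi^t = \psi^{\tau(t,\cdot)} \circ h$, or the flow shadowing lemma with expansivity), verify that the resulting $h$ is single-valued and continuous, and check that composing the two symmetric constructions yields the identity up to an orbit-preserving correction (this last point uses expansivity of each flow and is a known trap: the composition is a priori only a small time-shift along orbits, not the identity on the nose). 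None of these steps fails, but they are precisely the content of the cited references, so your proposal should be read as a correct high-level reduction to the flow shadowing/expansivity machinery rather than a self-contained argument.
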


This was originally proved by Anosov~\cite{A63, A67}; see also~\cite{FH19} for a modern treatment. This result allows us to restrict our attention to smooth Anosov flows, since any Anosov flow is orbit equivalent to a nearby smooth one. We note that there is no similar statement for projectively Anosov or semi-Anosov flows.

\paragraph{Invariant foliations.}

Although the invariant splitting in Definition~\ref{def:anosov} is only continuous, the hyperbolicity guarantees that $E^s$ and $E^u$ are uniquely integrable and tangent to $1$-dimensional foliations---a continuous vector field is not necessarily tangent to a foliation! As a result, the weak stable and unstable bundles are also uniquely integrable and tangent to $2$-dimensional foliations, denoted by $\mathcal{F}^{ws}$ and $\mathcal{F}^{wu}$, respectively.

This can be extended to semi-Anosov flows to show that $E^u$ and $E^{wu}$ are uniquely integrable (it follows from Hadamard--Perron theorem and the Unstable Manifold theorem as in the Anosov case, see~\cite{FH19}). However, it is \emph{not true} that the weak bundles of a \emph{projectively} Anosov flow are uniquely integrable; in~\cite[Example 2.2.9]{ET}, Eliashberg and Thurston construct an example of projectively Anosov flow without invariant foliations. However, using the generalization of Burago and Ivanov's construction of \emph{branching} foliations~\cite{BI08} established in~\cite{Mas24}, one can show that the weak bundles of an oriented projectively Anosov flow are tangent to (somewhat canonical) invariant branching foliations.

An important feature of the weak foliations of a (semi-)Anosov flow is that they are \textbf{taut}. Recall that a foliation is (everywhere) taut if through every point in $M$ passes a closed loop transverse to the foliation. This is equivalent to the existence of a (smooth) closed $2$-form positive on the leaves of the foliation; such `dominating' $2$-forms will be constructed later. An important feature of taut foliations is that transverse loops are homotopically essential (non-contractible) by Novikov's theorem (there exist more straightforward arguments for Anosov foliations).

Finally, let us note that the \emph{weak} foliations of an Anosov flow are preserved under orbit equivalence, in the following sense: if $\Phi$ and $\Psi$ are orbit equivalent via an orbit equivalence $h$, then $h$ sends the weak (un)stable leaves of $\Phi$ onto the weak (un)stable leaves of $\Psi$. A similar statement holds for semi-Anosov flows. However, the orbit equivalence does not preserve the \emph{strong} foliations in general.

\paragraph{Regularity of Anosov bundles.} In general, the strong bundles of an Anosov flow do not integrate to $\mathcal{C}^1$-foliations but to topological foliations with smooth leaves (the leaves themselves only vary continuously). However, if the flow is smooth, then its \emph{weak} foliations are known to be $\mathcal{C}^1$. In dimension three, more is true: the weak bundles themselves are $\mathcal{C}^1$ (see~\cite{H94}). In the semi-Anosov case, $E^{wu}$ is also $\mathcal{C}^1$ (see~\cite[Section 2.3]{Hoz24-reg}). This will allow us to represent these plane fields as kernels of $\mathcal{C}^1$ $1$-forms in the next section. Unfortunately, this does not hold for projectively Anosov flows; see~\cite[Example 2.2.9]{ET}.

In general, the Anosov splitting \emph{cannot} be too regular. For instance, classical results of Ghys imply that if an Anosov flow has a $\mathcal{C}^2$ invariant splitting, then it is orbit equivalent to an algebraic Anosov flow~\cite{Gh92, Gh93}.

        \subsection{Defining pairs}

One can encode the existence of a hyperbolic or dominated splitting for a flow $\Phi$ in terms of the expansion rates of the flow in the (weak) stable and unstable directions. Let us recall~\cite[Definition 3.11]{Hoz24} (see also~\cite[Proposition 3.2]{Mas23}). Let $\Phi$ be a projectively Anosov flow on $M$ generated by a vector field $X$ and $\overline{g}$ be a Riemannian metric on $N_X$. The \textbf{expansion rates} in the stable and unstable directions for $\overline{g}$ are continuous functions $r_s, r_u : M \rightarrow \R$ defined by $$ r_s \coloneqq \left. \frac{\partial}{\partial t}\right\rvert_{t=0} \ln \Vert d\phi^t (\overline{e}_s)\Vert, \qquad r_u \coloneqq \left. \frac{\partial}{\partial t}\right\rvert_{t=0} \ln \Vert d\phi^t (\overline{e}_u)\Vert,$$
where $\overline{e}_s$ and $\overline{e}_u$ are unit sections of $\overline{E}^s$ and $\overline{E}^u$, respectively, which are continuous and continuously differentiable along the flow $\Phi$. It follows from the definition that $$\mathcal{L}_X \overline{e}_s = -r_s \overline{e}_s, \qquad \mathcal{L}_X \overline{e}_u = -r_u \overline{e}_u.$$
The sections $\overline{e}_s$ and $\overline{e}_u$ can be dualized to obtain $1$-forms $\alpha_s$ and $\alpha_u$ on $M$ satisfying:
\begin{align*}
    \overline{\alpha}_s(\overline{e}_s) &= 1, & \ker \alpha_s &= E^{wu}, & \mathcal{L}_X \alpha_s &= r_s \, \alpha_s,\\
    \overline{\alpha}_u(\overline{e}_u) &= 1, & \ker \alpha_u &= E^{ws}, & \mathcal{L}_X \alpha_u &= r_u \, \alpha_u.
\end{align*}
Here, $\overline{\alpha}_s$ and $ \overline{\alpha}_u$ denote the $1$-forms on $N_X$ induced by $\alpha_s$ and $\alpha_u$, respectively. These $1$-forms are not $\mathcal{C}^1$ a priori, but they are continuous and continuously differentiable along $X$.
Then, the existence of a dominated splitting with an adapted metric readily implies:
$$r_s < r_u.$$
If $\Phi$ is semi-Anosov, we further get:
$$ 0 < r_u.$$
Finally, if $\Phi$ is Anosov then the hyperbolicity of the invariant splitting implies:
$$r_s < 0 < r_u.$$

Importantly, this procedure can be reverse, and one can \emph{characterize} (projectively, semi-) Anosov flows in term of suitable such pairs of $1$-forms $(\alpha_s, \alpha_u)$. The following result from~\cite{Mas23} (and~\cite{Mas24} in the semi-Anosov case) was obtained by combining results from~\cite{M95} and~\cite{Hoz24, Hoz24-reg}; see also~\cite[Proposition 2.15]{Hoz24-reg} for a slightly different formulation:

\begin{prop} \label{prop:defpair}
Let $\Phi$ be a smooth (or $\mathcal{C}^1$), nonsingular flow on $M$ generated by a vector field $X$, then
\begin{enumerate}[label=(\arabic*)]
\item $\Phi$ is oriented projectively Anosov if and only if there exist a pair of $1$-forms $(\alpha_s, \alpha_u)$ and continuous functions $r_u, r_s : M \rightarrow \R$ such that
	\begin{align*}
	 \alpha_s(X) &= \alpha_u(X) = 0,  & \overline{\alpha}_s \wedge \overline{\alpha}_u > 0, & & \mathcal{L}_X \alpha_s = r_s \, \alpha_s, & &\mathcal{L}_X \alpha_u = r_u \, \alpha_u,
	\end{align*}
	and $r_s < r_u$.
\item $\Phi$ is oriented semi-Anosov if and only if there exist $\alpha_s$, $\alpha_u$, $r_s$, $r_u$ as above such that $r_s < r_u$ and $0 < r_u$.
\item $\Phi$ is oriented Anosov if and only if there exist $\alpha_s$, $\alpha_u$, $r_s$, $r_u$ as above such that $r_s < 0 < r_u$.
\end{enumerate}
Moreover, $\ker \alpha_u = E^{ws}$ and $\ker \alpha_s = E^{wu}$.
\end{prop}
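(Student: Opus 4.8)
The two implications in each item run in opposite directions, and the forward (``only if'') implications are essentially the content of the computation preceding the statement: starting from an adapted metric $\overline{g}$ on $N_X$ one forms the unit sections $\overline{e}_s,\overline{e}_u$, dualizes them to $\alpha_s,\alpha_u$ (extended by $\alpha_\bullet(X)=0$), and reads off $\alpha_s(X)=\alpha_u(X)=0$, the positivity $\overline{\alpha}_s\wedge\overline{\alpha}_u>0$ (encoding that $(\overline{e}_s,\overline{e}_u)$ is a positively oriented frame), and $\mathcal{L}_X\alpha_\bullet=r_\bullet\,\alpha_\bullet$; the inequalities $r_s<r_u$, then $0<r_u$, then $r_s<0<r_u$ are precisely the domination, semi-Anosov, and hyperbolicity conditions recorded there. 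The identities $\ker\alpha_u=E^{ws}$ and $\ker\alpha_s=E^{wu}$ hold by this very construction, so all the work lies in the converse directions.

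For the converse of (1), I would take the data $(\alpha_s,\alpha_u,r_s,r_u)$ and \emph{define} $E^{wu}\coloneqq\ker\alpha_s$ and $E^{ws}\coloneqq\ker\alpha_u$; each is a plane field containing $X$ since $\alpha_\bullet(X)=0$, and the pointwise independence guaranteed by $\overline{\alpha}_s\wedge\overline{\alpha}_u>0$ yields a splitting $N_X=\overline{E}^s\oplus\overline{E}^u$ with the correct orientation. Invariance is immediate, as $\mathcal{L}_X\alpha_\bullet=r_\bullet\,\alpha_\bullet$ forces $\phi^t$ to rescale $\alpha_\bullet$ and hence to preserve its kernel. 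To obtain domination, choose the metric making the sections $\overline{e}_\bullet$ with $\overline{\alpha}_\bullet(\overline{e}_\bullet)=1$ orthonormal; differentiating $\overline{\alpha}_\bullet(\overline{e}_\bullet)\equiv1$ along $X$ and using $\mathcal{L}_X\overline{\alpha}_\bullet=r_\bullet\overline{\alpha}_\bullet$ forces $\mathcal{L}_X\overline{e}_\bullet=-r_\bullet\overline{e}_\bullet$. The usual cocycle computation then gives $\Vert d\phi^t(\overline{e}_\bullet)\Vert=\exp\!\int_0^t r_\bullet\circ\phi^\tau\,d\tau$, so
\[
\frac{\Vert d\phi^t(\overline{e}_u)\Vert}{\Vert d\phi^t(\overline{e}_s)\Vert}=\exp\!\left(\int_0^t (r_u-r_s)\circ\phi^\tau\,d\tau\right)\ \ge\ e^{\delta t},\qquad \delta\coloneqq\min_M(r_u-r_s)>0,
\]
using compactness of $M$. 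This is a dominated splitting (with $C=1$ for this metric), and metric-independence of the definition gives it for any metric, so $\Phi$ is projectively Anosov.

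The crux is the converse of (2): I must upgrade the weak decomposition to a genuine strong unstable line $E^u\subset TM$ with $\pi(E^u)=\overline{E}^u$ that is exponentially expanded. I would seek $E^u$ as a graph inside the invariant plane $E^{wu}=\ker\alpha_s$: fixing a lift $\hat{e}_u\in E^{wu}$ of $\overline{e}_u$ (differentiable along $X$) and writing $e_u=\hat{e}_u+g\,X$ for an unknown continuous $g:M\to\R$. Since $\mathcal{L}_X\hat{e}_u=-r_u\hat{e}_u+h\,X$ for some continuous $h$ (its $N_X$-part being $-r_u\overline{e}_u$) and $\mathcal{L}_X X=0$, the invariance condition $\mathcal{L}_X e_u\parallel e_u$ becomes the linear transport equation $Xg+r_u\,g=-h$ along the orbits, and moreover forces $\mathcal{L}_X e_u=-r_u e_u$. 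The \textbf{main obstacle} is producing a bounded, continuous solution $g$, and this is exactly where $0<r_u$ enters: since $r_u\ge\delta'>0$ on the compact $M$, the integrating-factor solution obtained by integrating backward along the flow,
\[
g(p)=-\int_{-\infty}^{0}\exp\!\left(\int_{0}^{\sigma}r_u(\phi^{\tau}p)\,d\tau\right)h(\phi^{\sigma}p)\,d\sigma,
\]
converges absolutely, because for $\sigma\le0$ the exponent equals $-\int_{\sigma}^{0}r_u\circ\phi^{\tau}\,d\tau\le-\delta'|\sigma|$; one then checks continuity in $p$. The resulting $E^u=\langle e_u\rangle$ is flow-invariant with $\mathcal{L}_X e_u=-r_u e_u$, so as in the previous paragraph it expands at rate $r_u\ge\delta'$, and $\alpha_u(e_u)=\overline{\alpha}_u(\overline{e}_u)\neq0$ makes it transverse to $E^{ws}=\ker\alpha_u$, giving $TM=E^{ws}\oplus E^u$. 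Together with the dominated splitting this exhibits $\Phi$ as semi-Anosov.

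Finally, (3) follows from (2) together with Lemma~\ref{lem:semianosov}. Under time reversal $X\mapsto-X$ the same forms satisfy $\mathcal{L}_{-X}\alpha_\bullet=(-r_\bullet)\alpha_\bullet$ with the stable and unstable roles interchanged, so $\Phi^{-1}$ is projectively Anosov with rates $-r_u<-r_s$ (using $r_s<r_u$) and is semi-Anosov precisely when $-r_s>0$, i.e.\ $r_s<0$. Thus $r_s<0<r_u$ makes both $\Phi$ and $\Phi^{-1}$ semi-Anosov, and Lemma~\ref{lem:semianosov} identifies $\Phi$ as Anosov; the reverse implication is again the preceding computation. The only genuinely analytic input is the solvability of the transport equation in the semi-Anosov step, which is the technical heart shared with the Hadamard--Perron and unstable-manifold arguments of~\cite{FH19,Hoz24-reg}.
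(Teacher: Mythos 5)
Your proposal is correct, and it is worth noting that the survey itself does not prove this proposition: it carries out only the forward-direction computation (expansion rates of a defining pair) and defers the converses to \cite{Mas23, Mas24, M95, Hoz24, Hoz24-reg}. Your forward direction and your converse of (1) coincide with the standard cocycle argument in those references: the identity $\Vert d\phi^t(\overline{e}_\bullet)\Vert = \exp\int_0^t r_\bullet\circ\phi^\tau\,d\tau$ turns $r_s<r_u$ into domination with $C=1$ on the compact $M$. Where you genuinely diverge is the converse of (2): the cited sources obtain the strong unstable bundle from the general theory of dominated splittings (Hadamard--Perron, cone fields, graph transform), whereas you exploit the fact that the problem is \emph{linear} --- a line field inside the invariant plane $E^{wu}$ transverse to $X$ is a graph $e_u=\hat e_u+gX$, the graph transform is affine, and its fixed point is the explicit backward integral for $g$, convergent precisely because $r_u\geq\delta'>0$. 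This is a legitimate and more elementary substitute for the black-boxed invariant-manifold theory; what it buys is self-containedness and an explicit formula, at the cost of only producing the bundle (not the strong unstable foliation, which the proposition does not require anyway). Two small points you should make explicit: in the forward direction of (2) and (3) the pointwise inequalities $0<r_u$ and $r_s<0$ require an \emph{adapted} metric (otherwise one only gets positivity of Birkhoff averages), which is where \cite[Proposition 5.1.5]{FH19} enters; and in (3), when passing to $\Phi^{-1}$ the naive swap $(\alpha_s',\alpha_u')=(\alpha_u,\alpha_s)$ reverses the sign of $\overline{\alpha}_s\wedge\overline{\alpha}_u$, so one must insert a sign (e.g.\ $\alpha_s'=-\alpha_u$) or reverse the ambient orientation to land back in the oriented setting of item (2). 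Neither issue affects the substance of the argument.
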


This motivates the following
\begin{defn} \label{def:defpair}
A \textbf{defining pair} for a projectively Anosov (resp.~semi-Anosov, Anosov) flow $\Phi$ is a pair of $1$-forms $(\alpha_s, \alpha_u)$ as in item (1) (resp.~(2), (3)) of Proposition~\ref{prop:defpair}. In the Anosov case, we further require the pair to be $\mathcal{C}^1$. We call $\alpha_s$ (resp.~$\alpha_u$) a \textbf{$s$-defining $1$-form} (resp.~\textbf{$u$-defining $1$-form}) for the flow.
\end{defn}

Note that for a semi-Anosov flow, the condition $r_u > 0$ is equivalent to ${d\alpha_u}_{\vert E^{wu}} > 0$. In other terms, $d\alpha_u$ is an \emph{exact} $2$-form positive on the leaves of $\mathcal{F}^{wu}$, implying that $\mathcal{F}^{wu}$ is taut. In the Anosov case, both $\mathcal{F}^{ws}$ and $\mathcal{F}^{wu}$ are taut.

\section{Contact and symplectic geometry}

In this section, we recall some basic notions from symplectic and contact geometry. We refer to~\cite{McDS} and~\cite{CE12} for extensive references.

        \subsection{Contact structures}

Let us first discuss contact structures with a focus on dimension three.

\paragraph{Basic features.}

A \textbf{contact structure} on a $3$-manifold $M$ is a `maximally non-integrable' plane field $\xi$. Concretely, the non-integrability can be (locally) phrased as the existence of a $1$-form $\alpha$ such that $\xi = \ker \alpha$ and $\alpha \wedge d\alpha \neq 0$. We will only consider \emph{cooriented} contact structures on \emph{oriented} $3$-manifold, so that such a \textbf{contact form} $\alpha$ is defined globally and evaluates positively on vectors positively transverse to $\xi$. Contact structures on $M$ come in two flavors: positive ($\alpha \wedge d\alpha >0$) and negative ($\alpha \wedge d\alpha < 0$) with respect of the orientation on $M$. Changing the coorientation does not change the sign of the contact structure in dimension three.

Two contact manifolds $(M, \xi)$ and $(M', \xi')$ are \textbf{contactomorphic} if there exists a diffeomorphism $\phi : M \rightarrow M'$ satisfying $\phi^*\xi' = \xi$.

By \textbf{Darboux theorem}, contact structures are locally standard: near every point $p \in M$, there exist local coordinates $(x,y,z)$ such that
$$\xi = \ker\big(dz \pm x dy\big).$$
The sign in front of $x dy$ determines the sign of $\xi$.

An object (vector field, curve) tangent to a contact structure $\xi$ is called \textbf{Legendrian}. If $X$ is a Legendrian vector field for $\xi$, then the Darboux coordinates from the last paragraph can be chosen such that $X$ becomes $\partial_x$. Then, the contact condition is equivalent to the (positive or negative) \emph{twisting} of $\xi$ about $X$.

A contact form for $\xi$ is not unique; any two such forms differ exactly by multiplication by a positive function. Given a contact form $\alpha$ for $\xi$, one defines its \textbf{Reeb vector field} $R_\alpha$ by
\begin{align*}
\alpha(R_\alpha) &= 1, \\
d\alpha(R_\alpha, \cdot \, ) &= 0.
\end{align*}
Note that the contact condition also means that $d\alpha$ is a nondegenerate $2$-form on $\xi$. From those equations, it is easy to see that the flow of $R_\alpha$ preserves $\xi$; reciprocally, any vector field $R$ transverse to $\xi$ and preserving $\xi$ is the Reeb vector field of some contact form for $\xi$ (namely, the unique contact form satisfying $\alpha(R)=1$).

An important feature of contact structures is that they are \emph{stable} objects, in the sense of \textbf{Gray stability}: if $(\xi_t)_{t \in [0,1]}$ is a path of contact structures, then there exists an isotopy $(\phi_t)_{t \in [0,1]}$ of $M$ such that $\phi_t^*\xi_t = \xi_0$ for every $t \in [0,1]$.

\paragraph{Tight vs.~overtwisted.} There exist two types of contact structures depending on their \emph{flexibility} or \emph{rigidity}. The standard overtwisted disk in $\R^3$ is the disk $D_\mathrm{OT} = \{r \leq \pi, \ z=0\}$ in cylindrical coordinates, with the contact structure $\xi_\mathrm{OT} = \ker \big( \cos(r)dz + r \sin(r) d\theta\big)$. Note that $\xi_\mathrm{OT}$ is tangent to $D_\mathrm{OT}$ along $\partial D_\mathrm{OT}$ and at the origin, and transverse to it elsewhere. An overtwisted disk in $(M, \xi)$ is a contact embedding of a neighborhood of $(D_\mathrm{OT}, \xi_\mathrm{OT})$ in $M$. A contact structure without overtwisted disks is \textbf{tight}. A fundamental result of Eliashberg~\cite{E89} is that overtwisted contact structures are completely classified in terms of their homotopy class as plane fields. On the other hand, tight contact structures are more geometric and are much harder to construct and classify.

A contact structure that admits a Reeb vector field without contractible closed orbits is called \textbf{hypertight} (not all of its Reeb vector fields need to satisfy this condition). Hofer~\cite{H93} showed that $3$-dimensional hypertight contact structures are tight.

        \subsection{Symplectic and Liouville structures}
We now discuss symplectic structures with a particular focus on $4$-dimensional manifolds.

\paragraph{Symplectic manifold.} A \textbf{symplectic form} on a manifold $V$ is a closed nondegenerate $2$-form $\omega$. If $\dim(V) =4$, the nondegeneracy of $\omega$ is equivalent to requiring that $\omega \wedge \omega$ is a volume form; in particular, $V$ is orientable and $\omega$ determines an orientation of $V$.

Darboux theorem for symplectic manifolds implies that symplectic forms are locally standard. On a $4$-manifold $V$, this means that every point $p \in V$ has a neighborhood with coordinates $(x_1, y_1, x_2, y_2)$ in which $\omega$ is of the form
$$\omega = dx_1 \wedge dy_1 + dx_2 \wedge dy_2.$$

Since a symplectic form $\omega$ is closed, it defines a cohomology class $[\omega] \in H^2(V; \R)$. By \textbf{Moser stability}, if $(\omega_t)_{t \in [0,1]}$ is a path of cohomologous symplectic forms on $V$ (supported on a compact subset away from $\partial V$), then there exists an isotopy $(\phi_t)_{t \in [0,1]}$ of $V$ such that $\phi_t^*\omega_t = \omega_0$ for every $t \in [0,1]$.  

\paragraph{Liouville structure.} We now focus on \emph{exact} symplectic manifolds, i.e., symplectic manifolds $(V, \omega)$ such that $\omega$ is exact. In that case, $V$ cannot be closed. If $V$ is compact with boundary $\partial V$, we say that a primitive $\lambda$ of $\omega$ is a \textbf{Liouville form} if its restriction on $\partial V$ is a contact form. We call the pair $(\omega, \lambda)$ a \textbf{Liouville structure} on $V$, and the triple $(V, \omega, \lambda)$ (or simply the pair $(V, \lambda)$) a \textbf{Liouville domain}.

If $(M, \xi)$ is a contact manifold, a \textbf{Liouville (or exact) filling} of $(M, \xi)$ is a compact Liouville manifold $(V, \lambda)$ such that $(\partial V, \ker \lambda_{\vert \partial V})$ is contactomorphism to $(M, \xi)$.

Since $\omega$ is nondegenerate, a primitive $\lambda$ of $\omega$ is equivalent to a vector field $Z$, defined by
$$\omega(Z, \cdot \ ) = \lambda.$$
This vector field $Z$ is called the \textbf{Liouville vector field} of $\lambda$. Alternatively, it is characterized by the relation
$$\mathcal{L}_Z \omega = \omega.$$
The condition that the restriction of $\lambda$ on $\partial V$ is contact is equivalent to $Z$ being positively transverse to $\partial V$. Writing $\alpha = \lambda_{\vert \partial V}$, there exists a tubular neighborhood of $\partial V$ in $V$ of the form $(-\epsilon, 0]_s \times \partial V$ in which $\lambda$ becomes $e^s \lambda$, and $Z$ becomes $\partial_s$. We can then attach a cylinder of the form $[0,+\infty) \times \partial V$ to $V$ along $\partial V$ and extend $\lambda$ by $e^s \alpha$ on $[0,+\infty) \times \partial V$. We call the resulting symplectic manifold $\big(\widehat{V}, \widehat{\omega}, \widehat{\lambda}\big)$ the \textbf{completion} of $(V, \omega, \lambda)$.

Two exact symplectic manifolds $(V, \omega, \lambda)$ and $(V', \omega', \lambda')$ are \textbf{exact symplectomorphic} is there exists a diffeomorphism $\phi : V \rightarrow V'$ satisfying that $\phi^*\lambda' = \lambda + df$ for some function $f : V \rightarrow \R$ with compact support disjoint from $\partial V$. We say that $(V, \omega, \lambda)$ is a \textbf{(finite type) Liouville manifold} is it is exact symplectomorphic to the completion of a Liouville domain. There is an obvious truncation procedure to obtain a Liouville domain from a Liouville manifold, such that its completion is exact symplectomorphic to the original Liouville manifold.

Combining Gray stability for contact structures and Moser stability for symplectic structures, one obtains the following stability result for Liouville domains. If $V$ is a compact manifold with boundary and $(\omega_t, \lambda_t)_{t \in [0,1]}$ is a path of Liouville structures on $V$, then there exists an isotopy $(\phi_t)_{t \in [0,1]}$ such that $\phi_t : (V, \lambda_0) \rightarrow (V, \lambda_t)$ is an exact symplectomorphism for all $t \in [0,1]$. In particular, $\phi_t$ restricts to a contactomorphism on $\partial V$ between the contact structures induced by $\lambda_0$ and $\lambda_t$. There is a similar statement for Liouville manifolds, under suitable taming hypothesis `at infinity'.

If $(V, \omega, \lambda)$ is a Liouville domain or manifold, with Liouville vector field $Z$, the $\textbf{skeleton}$ of $Z$ (or $\lambda$) is the compact set 
\begin{align} \label{eq:skel}
\mathfrak{skel}(V, \lambda) \coloneqq \bigcup_{\substack{K \subset V \\\mathrm{compact}}} \bigcap_{t > 0} \phi^{-t}_Z(K)
\end{align}
where $\phi_Z^t$ denotes the time-$t$ flow of $Z$. In other words, $\mathfrak{skel}(V, \lambda)$ is the (compact) subset of $V$ onto which $V$ retracts under the backward flow of $Z$. It heavily depends on the choice of the Liouville structure $\lambda$ (and not only on $\omega$).

In general, the skeleton of a Liouville domain can be extremely complicated, since the dynamics of $Z$ can be quite chaotic. There is a particular class of Liouville domains, called \textbf{Weinstein domains}, for which the Liouville dynamics is relatively tame. A Liouville domain is Weinstein if there exists a Morse function on $V$ for which $Z$ is gradient-like. Alternatively, Weinstein domains can be defined by attaching specific handles called \emph{Weinstein handles}. The skeleton is exactly the closure of the union of the stable manifolds of the handles. It turns out that these handles have index at most half the dimension of $V$; in dimension $4$, the index is at most $2$ implying that $\partial V$ is connected and $H_3(V; \Z)$ vanishes. We define a \textbf{(finite type) Weinstein manifold} as a Liouville manifold which is exact symplectomorphic to the completion of a Weinstein domain.

Not every Liouville domain is Weinstein. The first example of a Liouville manifold which doesn't admit Weinstein structures was constructed by McDuff~\cite{McD}. Geiges constructed more examples~\cite{Gei}, and Mitsumatsu established a general construction from $3$-dimensional Anosov flows in~\cite{M95}. Mitsumatsu's construction will be reviewed in Section~\ref{sec:Mit}.

\paragraph{Liouville pairs.} We now consider special Liouville structures on $4$-manifolds of the form $[-1, 1] \times M$ or $\R \times M$, where $M$ is a closed, oriented and connected $3$-manifold.

A pair of contact forms $(\alpha_-, \alpha_+)$ on $M$ is an \textbf{exponential Liouville pair} if the $1$-form $\lambda$ on $V =\R_s \times M$ defined by 
\begin{align} \label{eq:expliouv}
    \lambda = \lambda_\mathrm{exp} \coloneqq e^{-s} \alpha_- + e^s \alpha_+
\end{align}
is Liouville. Because of the specific form of $\lambda$, it is a Liouville form if and only if $\alpha_-$ is a negative contact form, $\alpha_+$ is a positive contact form, and $d\lambda$ is a nondegenerate $2$-form. In that case, $(V, \lambda)$ is a \emph{non-Weinstein} Liouville manifold. For $A > 0$ large enough, $[-A, A] \times M \subset V$ is a Liouville domain whose completion is exact symplectomorphic to $(V, \lambda)$.

An alternative definition exists in the literature. A pair of contact forms $(\alpha_-, \alpha_+)$ on $M$ is an \textbf{linear Liouville pair} if the $1$-form $\lambda$ on $V =[-1,1]_t \times M$ defined by
\begin{align} \label{eq:linliouv}
   \lambda = \lambda_\mathrm{lin} \coloneqq (t-1) \alpha_- + (t+1) \alpha_+ 
\end{align}
is Liouville. In that case, $(V, \lambda)$ is a non-Weinstein Liouville \emph{domain}.

We warn the reader that these two notions are \textbf{not} equivalent: there exists pairs of contact forms which are exponential Liouville pairs but are not linear Liouville pairs. However, a pair of contact forms which is both a linear and an exponential Liouville pair induce the same Liouville structures. We refer to~\cite[Section 5.2]{Mas23} for a more thorough discussion.

Not every $3$-manifold admits a Liouville pair. For instance, we showed in~\cite{Mas24} that the existence of a (linear or exponential) Liouville pair implies the existence of a taut foliation without closed leaves on $M$. However, we will see below that Anosov flows constitute an important source of Liouville pairs.

\section{The Mitsumatsu construction} \label{sec:Mit}

We now present a construction due to Mitsumatsu~\cite{M95} and independently by Eliashberg--Thurston~\cite{ET}, and later extended and streamlined by Hozoori~\cite{H23}.

        \subsection{Projectively Anosov case}

Let $\Phi$ be a projectively Anosov flow on $M$ generated by the smooth vector field $X$, and $(\alpha_s, \alpha_u)$ be a defining pair as in Definition~\ref{def:defpair}. We define:
\begin{align}
    \alpha_- &\coloneqq \alpha_u + \alpha_s, \label{eq:alpha-}\\
    \alpha_+ &\coloneqq \alpha_u - \alpha_s. \label{eq:alpha+}
\end{align}
Those $1$-forms are continuous and continuously differentiable along $X$. Then, by the definition of a defining pair, it is easy to compute:
\begin{align}
    \alpha_- \wedge \mathcal{L}_X \alpha_- &= (r_u - r_s) \, \alpha_s \wedge \alpha_u,\\
    \alpha_+ \wedge \mathcal{L}_X \alpha_+ &= -(r_u - r_s) \, \alpha_s \wedge \alpha_u.
\end{align}
If $\alpha_-$ and $\alpha_+$ were $\mathcal{C}^1$, this would imply that they are negative and positive contact forms, respectively, since $r_u - r_s > 0$. While they are not assumed to be $\mathcal{C}^1$, they can be approximated by smooth $1$-forms, denoted $\widetilde{\alpha}_-$ and $\widetilde{\alpha}_+$, still satisfying $\widetilde{\alpha}_-(X) = \widetilde{\alpha}_+(X)=0$, and which are contact; see~\cite{Hoz24, Mas23} for more details. As a result, we obtain two contact structures with opposite signs $\xi_-$ and $\xi_+$ satisfying $\xi_- \pitchfork \xi_+ = \langle X \rangle$. Such a pair is called a \textbf{bicontact structure supporting $\Phi$}.

\begin{defn}
    A \textbf{bicontact structure} on $M$ is a pair $(\xi_-, \xi_+)$ of transverse negative and positive contact structures.
\end{defn}

Reciprocally, Mitsumatsu and Eliashberg--Thurston showed that any bicontact structure $(\xi_-, \xi_+)$ on $M$ supports an projectively Anosov flow, in the sense that any nonsingular $\mathcal{C}^1$ vector field $X \in \xi_- \cap \xi_+$ generates a projectively Anosov flow. In conclusion:

\begin{thm} \label{thm:projanosov}
    A nonsingular flow $\Phi$ generated by a smooth vector field $X$ is projectively Anosov if and only if it is supported by a bicontact structure.
\end{thm}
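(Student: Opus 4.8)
The plan is to prove both implications, leaning on the constructions already carried out in the excerpt for the forward direction and on the defining-pair characterization (Proposition~\ref{prop:defpair}) for the converse.

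\medskip

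\noindent\textbf{Forward direction.} Suppose $\Phi$ is projectively Anosov, generated by a smooth nonsingular $X$. By Proposition~\ref{prop:defpair}(1) there is a defining pair $(\alpha_s, \alpha_u)$ with $r_s < r_u$. I would simply invoke the discussion preceding the theorem: set $\alpha_\pm = \alpha_u \mp \alpha_s$ as in~\eqref{eq:alpha-}--\eqref{eq:alpha+} and record the two identities $\alpha_\mp \wedge \mathcal{L}_X \alpha_\mp = \pm(r_u - r_s)\,\alpha_s \wedge \alpha_u$. Since $r_u - r_s > 0$ and $\overline{\alpha}_s \wedge \overline{\alpha}_u > 0$, these would be negative and positive contact forms if they were $\mathcal{C}^1$. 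The only subtlety is the regularity: $\alpha_\pm$ are merely continuous (though differentiable along $X$), so I would cite the smoothing results of~\cite{Hoz24, Mas23} to replace them by smooth $\widetilde\alpha_\pm$ with $\widetilde\alpha_\pm(X) = 0$ that remain contact. Setting $\xi_\pm = \ker\widetilde\alpha_\pm$ gives a negative and a positive contact structure, both containing $X$ in their kernel, hence transverse along $\langle X\rangle$; this is exactly a bicontact structure supporting $\Phi$.

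\medskip

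\noindent\textbf{Converse direction.} Now suppose $\Phi$ is generated by a smooth nonsingular $X \in \xi_- \cap \xi_+$, where $(\xi_-,\xi_+)$ is a bicontact structure. Write $\xi_\pm = \ker\beta_\pm$ with $\beta_-$ a negative and $\beta_+$ a positive contact form; since $X$ is tangent to both, $\beta_\pm(X) = 0$. The strategy is to recover a defining pair by inverting the change of variables~\eqref{eq:alpha-}--\eqref{eq:alpha+}: define $\alpha_u \coloneqq \tfrac12(\beta_+ + \beta_-)$ and $\alpha_s \coloneqq \tfrac12(\beta_- - \beta_+)$, so that $\beta_\pm = \alpha_u \mp \alpha_s$ and $\alpha_s(X) = \alpha_u(X) = 0$. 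Transversality $\xi_- \pitchfork \xi_+ = \langle X\rangle$ forces $\overline\beta_- \wedge \overline\beta_+ \neq 0$ on $N_X$, and after fixing coorientations one arranges $\overline\alpha_s \wedge \overline\alpha_u > 0$. The two contact conditions $\beta_\mp \wedge d\beta_\mp \lessgtr 0$ then translate, via the same bilinear computation as above, into a single positivity statement for the induced quadratic form; the remaining task is to produce the functions $r_s, r_u$ with $\mathcal{L}_X\alpha_{s} = r_s\alpha_s$, $\mathcal{L}_X\alpha_u = r_u\alpha_u$ and $r_s < r_u$. Here I would note that $\mathcal{L}_X\beta_\pm = \iota_X d\beta_\pm$ (as $\beta_\pm(X)=0$) annihilates $X$ and lies in the span of $\beta_-,\beta_+$ on $N_X$, so $\mathcal{L}_X$ acts as a $2\times 2$ matrix on $\langle\alpha_s,\alpha_u\rangle$; the contact inequalities pin down the sign structure of this matrix. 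Invoking Proposition~\ref{prop:defpair}(1) in the reverse direction then yields that $\Phi$ is projectively Anosov.

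\medskip

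\noindent\textbf{Main obstacle.} The genuine difficulty is not the linear algebra but the fact that in the converse the off-diagonal terms of the $\mathcal{L}_X$-action need not vanish: $\mathcal{L}_X\alpha_s$ and $\mathcal{L}_X\alpha_u$ are a priori arbitrary combinations of $\alpha_s$ and $\alpha_u$, whereas a defining pair demands that each be a pure multiple of itself. One must therefore diagonalize the dominated action, i.e.\ find the two $X$-invariant line fields $\overline E^s, \overline E^u$ inside $N_X$ as the dominated splitting, and only in that adapted frame do the eigenvalue functions $r_s < r_u$ appear. Establishing the existence of this invariant dominated splitting from the pointwise contact inequalities is precisely the content of Mitsumatsu's and Eliashberg--Thurston's theorem; I would cite~\cite{M95, ET} (and the streamlined treatment in~\cite{H23}) for this step rather than reprove it, since it rests on a cone-field argument showing that the positivity of $\beta_-\wedge d\beta_-$ and $\beta_+\wedge d\beta_+$ forces exponential domination of one invariant direction over the other.
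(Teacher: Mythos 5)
Your proposal is correct and follows essentially the same route as the paper: the forward direction is exactly the Mitsumatsu construction $\alpha_\pm = \alpha_u \mp \alpha_s$ with the identity $\alpha_\pm \wedge \mathcal{L}_X\alpha_\pm = \mp(r_u - r_s)\,\alpha_s\wedge\alpha_u$ and the smoothing step from~\cite{Hoz24, Mas23}, while the converse is deferred to the cone-field argument of Mitsumatsu and Eliashberg--Thurston, which is precisely what the paper does (it states the converse with a citation to~\cite{M95, ET} rather than reproving it). Your identification of the genuine obstacle in the converse---that $\frac12(\beta_+\pm\beta_-)$ need not diagonalize the $\mathcal{L}_X$-action, so the invariant dominated splitting must be produced separately---is accurate and appropriately handled by citation.
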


\begin{figure}[h!]
    \begin{center}
        \begin{picture}(90, 75)(0,0)
        \put(0,0){\includegraphics[width=90mm]{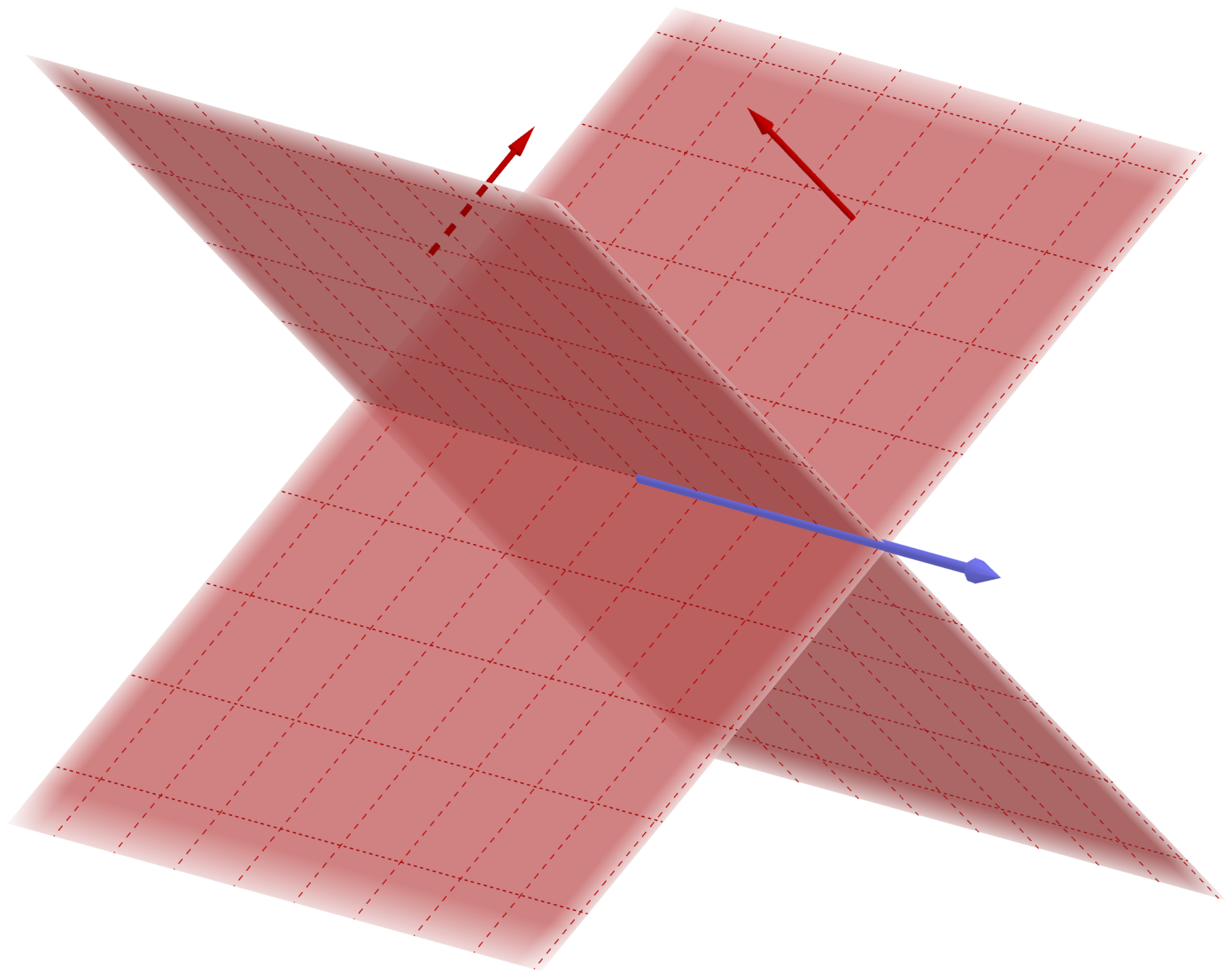}}
        \color{red1}
        \put(12,48){$\xi_-$}
        \put(83,48){$\xi_+$}
        \color{blue1}
        \put(75,28){$X$}
        \end{picture}
        \captionsetup{width=100mm}
        \caption{Bicontact structures supporting a vector field/flow.}
    \label{fig:bicontact}
    \end{center}
\end{figure}

\begin{figure}[h!]
    \begin{center}
        \begin{picture}(60, 60)(0,0)
        \put(0,0){\includegraphics[width=60mm]{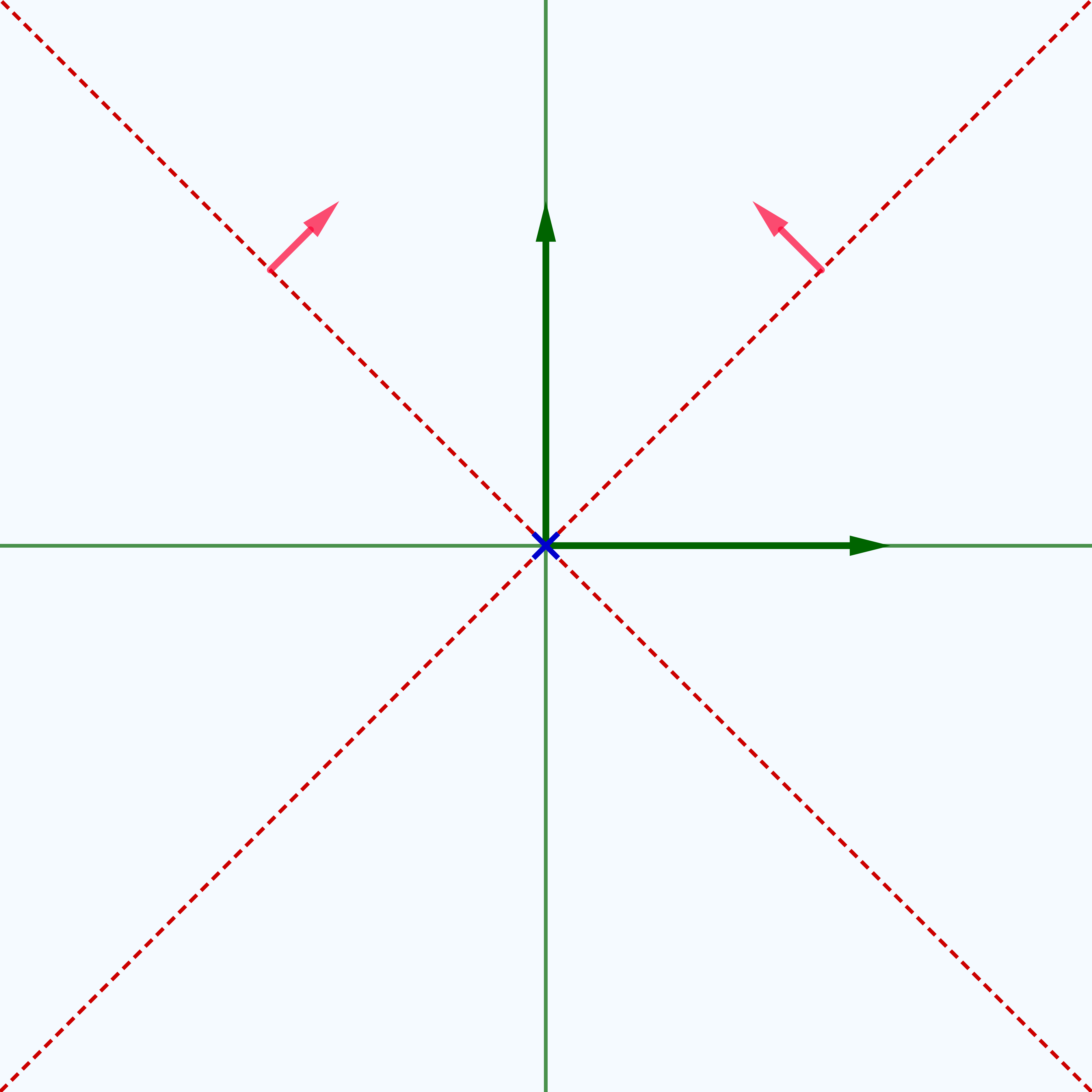}}
        \color{blue1}
        \put(26.5,24){$X$}
        \color{green1}
        \put(40,27){$e_s$}
        \put(52,26){$E^s$}
        \put(31,53){$E^u$}
        \put(31,41){$e_u$}
        \color{red1}
        \put(5,47){$\xi_-$}
        \put(52,47){$\xi_+$}
        \end{picture}
        \captionsetup{width=90mm}
        \caption{Anosov/dominated splitting and supporting bicontact structure.}
        \label{fig:Anosovorientation}
    \end{center}
\end{figure}

        \subsection{Semi-Anosov case}

Let us now assume that $\Phi$ is semi-Anosov and $(\alpha_s, \alpha_u)$ is a defining pair for $\Phi$. We define $\alpha_\pm$ as in equations~\eqref{eq:alpha-} and~\eqref{eq:alpha+}, and we define a $1$-form $\lambda$ on $[-1,1]_t \times M$ as in~\eqref{eq:linliouv}
\begin{align*}
    \lambda &\coloneqq (1-t) \alpha_- + (1+t) \alpha_+\\
        &= 2(\alpha_u - t \alpha_s). \nonumber
\end{align*}
As before, since $\alpha_s$ and $\alpha_u$ are not $\mathcal{C}^1$ in general, $\lambda$ is not $\mathcal{C}^1$. For simplicity, let us pretend that all these forms are $\mathcal{C}^1$ (the general case can be treated using the approximation methods of~\cite{Hoz24, Mas23}). Then a straightforward computation shows:
\begin{align*}
    \iota_X \iota_{\partial_t} (d\lambda \wedge d\lambda) &= 2\iota_X(\mathcal{L}_{\partial_t} \lambda \wedge d\lambda)\\
    &= 4 \iota_X(-\alpha_s \wedge d_\lambda) \\
    &= 4 \alpha_s \wedge \mathcal{L}_X \lambda \\
    &= 8 \alpha_s \wedge (r_u \alpha_u - t r_s \alpha_s) \\
    &= 8r_u \alpha_s \wedge \alpha_u,
\end{align*}
hence $\omega = d\lambda$ is symplectic since $r_u > 0$. Moreover, along the boundary components of $[-1,1] \times M$, $\lambda$ is a contact form for $\xi_-$ and $\xi_+$, respectively, hence $\lambda$ is a \emph{Liouville filling} of $(M, \xi_+) \sqcup (-M, \xi_-)$. With the terminology of the previous section, $(\alpha_-, \alpha_+)$ is a linear Liouville pair on $[-1,1] \times M$. We will call this construction the \textbf{standard construction} of a linear Liouville pair supporting a semi-Anosov flow.

As we already mentioned, there is a slightly different notion of Liouville pair in the literature which is also relevant. Instead of a linear interpolation as in~\eqref{eq:linliouv}, one can also consider an exponential interpolation on the noncompact manifold $\R_s \times M$ as in~\eqref{eq:expliouv}.

Similar computations as before show that the standard construction also yields an exponential Liouville pair (see~\cite{Mas23} for a detailed proof). We warn the reader that although similar, the notions of linear and exponential Liouville pairs are \underline{different}, see~\cite[Lemma 5.6]{Mas23}. However, the linear and exponential Liouville structures defined by the standard construction are equivalent, see~\cite[Section 5.6]{Mas23}.

\begin{defn}
    A \textbf{transverse (linear or exponential) Liouville pair} is a (linear or exponential) Liouville pair $(\alpha_-, \alpha_+)$ such that $\xi_- = \ker \alpha_-$ and $\xi_+ = \ker \alpha_+$ are transverse contact structures.
\end{defn}

It follows that a transverse Liouville pair $(\alpha_-, \alpha_+)$ defines a bicontact structure $(\xi_-, \xi_+)$, and therefore the intersection $\xi_- \cap \xi_+$ is generated by a projectively Anosov flow. Moreover, the Liouville condition implies that this flow (suitably oriented as in~\cite[Definition 2.1]{Mas23}, see also Figure~\ref{fig:bicontact}) is semi-Anosov; see~\cite{Hoz24-reg, Mas24}. Therefore, we get:

\begin{thm}
    A nonsingular flow $\Phi$ generated by a smooth vector field $X$ is semi-Anosov if and only if it is supported by a transverse Liouville pair.
\end{thm}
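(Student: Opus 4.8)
The plan is to read this statement as a refinement of Theorem~\ref{thm:projanosov}: being supported by a bicontact structure characterizes projectively Anosov flows, and the \emph{Liouville} upgrade of the bicontact pair should correspond exactly to the extra semi-Anosov condition $r_u > 0$ in Proposition~\ref{prop:defpair}(2). I would prove the two implications separately, using the standard construction in one direction and a sign analysis of the Liouville condition in the other.

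For the forward implication, suppose $\Phi$ is semi-Anosov and choose a defining pair $(\alpha_s, \alpha_u)$ as in Proposition~\ref{prop:defpair}(2), so that $r_s < r_u$ and $r_u > 0$. Setting $\alpha_- = \alpha_u + \alpha_s$ and $\alpha_+ = \alpha_u - \alpha_s$, the kernels $\xi_\pm = \ker\alpha_\pm$ are transverse by construction, with $\xi_- \cap \xi_+ = \langle X\rangle$. The computation already carried out in this section gives $\iota_X\iota_{\partial_t}(d\lambda \wedge d\lambda) = 8\,r_u\,\alpha_s \wedge \alpha_u$ for $\lambda = (1-t)\alpha_- + (1+t)\alpha_+ = 2(\alpha_u - t\alpha_s)$; since $r_u > 0$ and $\alpha_s \wedge \alpha_u$ is nonvanishing on $N_X$, the form $\omega = d\lambda$ is nondegenerate, and $\lambda$ restricts to contact forms on the two boundary components, so $(\alpha_-, \alpha_+)$ is a transverse linear Liouville pair supporting $\Phi$ (and, by the parallel computation, an exponential one on $\R_s \times M$). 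The only genuine subtlety here is regularity: for a semi-Anosov flow $E^{ws} = \ker\alpha_u$ need not be $\mathcal{C}^1$, so I would replace $\alpha_\pm$ by the smooth approximations of~\cite{Hoz24, Mas23}, which remain annihilated by $X$, contact, and transverse, hence still form a transverse Liouville pair.

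For the backward implication, suppose $(\alpha_-, \alpha_+)$ is a transverse Liouville pair supporting $\Phi$. Transversality makes $(\xi_-, \xi_+) = (\ker\alpha_-, \ker\alpha_+)$ a bicontact structure, so Theorem~\ref{thm:projanosov} already gives that $\Phi$ is projectively Anosov, with a defining pair $(\alpha_s, \alpha_u)$ and rates $r_s < r_u$. It remains to upgrade this to the semi-Anosov inequality $r_u > 0$, and this is exactly where the Liouville (as opposed to merely bicontact) hypothesis enters. I would run the computation of this section in reverse: the nondegeneracy of $\omega = d\lambda$ forces $\iota_X\iota_{\partial_t}(d\lambda \wedge d\lambda)$, which equals a positive multiple of $r_u\,\alpha_s \wedge \alpha_u$, to be a positive multiple of the ambient volume form; once $X$ is oriented as in~\cite[Definition 2.1]{Mas23} so that $\overline{\alpha}_s \wedge \overline{\alpha}_u > 0$, this pins down $r_u > 0$. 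Proposition~\ref{prop:defpair}(2) then concludes that $\Phi$ is semi-Anosov, which is the content cited from~\cite{Hoz24-reg, Mas24}.

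The main obstacle is the orientation bookkeeping in the backward direction. A bicontact structure determines the line $\xi_- \cap \xi_+$ but not a direction along it, and there are two compatible ways to label the contact forms and to assign the dominated splitting to stable versus unstable directions; the time-reversed flow would satisfy $r_u < 0$ instead, so nondegeneracy of $\omega$ alone only yields $r_u \neq 0$. What singles out the semi-Anosov orientation is precisely that the orientation induced by the symplectic form $d\lambda$ agrees with the flow orientation of~\cite[Definition 2.1]{Mas23}, so the crux is to verify carefully that symplectic nondegeneracy \emph{together with} this orientation convention forces the sign of $r_u$ to be positive. A clean way to organize this is to observe that the standard construction of the forward direction is a bijection on appropriately oriented data, so that the orientation making $\omega$ symplectic is the unique one for which the supported flow is semi-Anosov; otherwise one reproduces the explicit sign computation of~\cite{Hoz24-reg, Mas24}. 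The regularity caveat in the forward direction, by contrast, is routine given the cited approximation results and does not affect the logical structure.
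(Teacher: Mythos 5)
Your forward direction is exactly the paper's argument: the standard construction from a defining pair, the computation $\iota_X\iota_{\partial_t}(d\lambda\wedge d\lambda)=8r_u\,\alpha_s\wedge\alpha_u$, and the smoothing caveat all match Section~\ref{sec:Mit}. The gap is in the backward direction. The identity you invoke there only holds when $\lambda$ is built from a \emph{defining pair} $(\alpha_s,\alpha_u)$ in the sense of Definition~\ref{def:defpair}, i.e., when $\ker\alpha_u=E^{ws}$, $\ker\alpha_s=E^{wu}$, and $\mathcal{L}_X\alpha_u=r_u\alpha_u$, $\mathcal{L}_X\alpha_s=r_s\alpha_s$. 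Given an arbitrary transverse Liouville pair $(\alpha_-,\alpha_+)$, Theorem~\ref{thm:projanosov} does give a projectively Anosov flow, but the forms $\tfrac12(\alpha_-+\alpha_+)$ and $\tfrac12(\alpha_--\alpha_+)$ are \emph{not} a defining pair in general: their kernels are plane fields containing $X$ lying in the quadrants cut out by $(\xi_-,\xi_+)$, but they need not coincide with the invariant weak bundles and they are not eigenvectors of $\mathcal{L}_X$. So ``running the computation in reverse'' does not produce the quantity $r_u$ at all, and the nondegeneracy of $d\lambda$ cannot be read off as a sign condition on $r_u$ in one line. This is the actual content of the implication the paper defers to~\cite{Hoz24-reg, Mas24}, and it is not merely orientation bookkeeping.

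What those references exploit is the $t$-direction: for each $p\in M$ there is a unique $\sigma(p)\in(-1,1)$ at which the interpolated form $(1-t)\alpha_-+(1+t)\alpha_+$ has kernel $E^{ws}_p$ (equivalently, is a positive multiple of a $u$-defining form), and one evaluates the nondegeneracy of $d\lambda$ along this graph --- the skeleton of the Liouville structure --- to extract the expansion $r_u>0$; alternatively one runs a cone-field argument to manufacture the strong unstable bundle directly. Either route must also contend with the fact that $\sigma$ and $E^{ws}$ are a priori only continuous. Your orientation discussion (that the symplectic orientation singles out the semi-Anosov direction of the flow rather than its inverse) is correct and matches the paper's ``suitably oriented'' caveat, but it addresses a secondary issue; the primary missing step is relating the given Liouville pair to a genuine defining pair before any sign can be assigned.
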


In view of Lemma~\ref{lem:semianosov}, we immediately obtain:

\begin{thm} \label{thm:2ALpairs}
    A nonsingular flow $\Phi$ generated by a smooth vector field $X$ is Anosov if and only if both $\Phi$ and $\Phi^{-1}$ are supported by transverse Liouville pairs.
\end{thm}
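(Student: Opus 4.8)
The plan is to deduce the statement directly from the two biconditional results already in hand: Lemma~\ref{lem:semianosov}, which characterizes Anosov flows among projectively Anosov ones via their time-reversals, and the preceding theorem, which characterizes semi-Anosov flows as exactly those supported by a transverse Liouville pair. The only genuine work is to interface these two statements correctly across the operation of reversing the flow, so that both are applied to the right objects.

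First I would record that passing from $\Phi = \{\phi^t\}$ to its reverse $\Phi^{-1} = \{\phi^{-t}\}$ amounts to replacing the generating vector field $X$ by $-X$, which is again smooth and nonsingular; hence the hypotheses of the preceding theorem apply verbatim to $\Phi^{-1}$ as well as to $\Phi$. Applying that theorem separately to each flow, the condition that both $\Phi$ and $\Phi^{-1}$ be supported by transverse Liouville pairs is equivalent to both $\Phi$ and $\Phi^{-1}$ being semi-Anosov. Next I would invoke Lemma~\ref{lem:semianosov}: since a semi-Anosov flow is by definition projectively Anosov, as soon as $\Phi$ is semi-Anosov the lemma is applicable, and it asserts precisely that $\Phi$ is Anosov if and only if both $\Phi$ and $\Phi^{-1}$ are semi-Anosov. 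Concatenating the two equivalences yields the chain
$$\Phi \text{ Anosov} \iff \Phi,\ \Phi^{-1} \text{ semi-Anosov} \iff \Phi,\ \Phi^{-1} \text{ supported by transverse Liouville pairs},$$
which is the assertion. For the forward implication in isolation, one additionally uses that the reverse of an Anosov flow is again Anosov (the hyperbolic splitting persists with $E^s$ and $E^u$ exchanged, and the expansion rates satisfy $r_s < 0 < r_u$ for both flows), hence in particular semi-Anosov.

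The main subtlety is bookkeeping rather than mathematics: one must track the orientation and stable/unstable conventions consistently when the flow is reversed. Concretely, the transverse Liouville pair supporting $\Phi^{-1}$ is taken with the orientation induced on the reversed flow, under which the roles of $\overline{E}^s$ and $\overline{E}^u$—and correspondingly of $\alpha_-$ and $\alpha_+$—are interchanged and the expansion rates are negated, matching the convention of \cite[Definition 2.1]{Mas23} used in the preceding theorem. Once this is pinned down so that the biconditional semi-Anosov characterization genuinely applies to $\Phi^{-1}$ and not merely to $\Phi$, the result is immediate from Lemma~\ref{lem:semianosov}.
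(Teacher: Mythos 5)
Your proposal is correct and is essentially the paper's own argument: the theorem is deduced immediately by combining the preceding theorem (semi-Anosov $\iff$ supported by a transverse Liouville pair, applied to both $\Phi$ and $\Phi^{-1}$) with Lemma~\ref{lem:semianosov}. Your additional remarks on the orientation bookkeeping under time reversal are sensible but not spelled out in the paper, which treats the deduction as immediate.
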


This result characterizes Anosov flows purely in terms of contact and symplectic geometry. We will encounter many more such characterizations below.

        \subsection{Reeb vector fields}

A special feature of the standard construction of contact forms from a semi-Anosov flow is that the induced Reeb vector fields satisfy specific transversality properties with respect to the (semi-)Anosov foliations. More specifically, if $(\alpha_s, \alpha_u)$ is a defining pair supporting the semi-Anosov flow $\Phi$, then the Reeb vector fields $R_\pm$ of the contact forms $\alpha_\pm$ obtained from the standard construction (with appropriate smoothing) satisfy
\begin{align*}
    \alpha_s(R_-) &> 0, \\
    \alpha_s(R_+) &< 0,
\end{align*}
hence $R_-$ (resp.~$R_+$) is positively (resp.~negatively) transverse to $\mathcal{F}^{wu}$. Since $\mathcal{F}^{wu}$ is taut, $R_\pm$ cannot have contractible closed orbits, and the contact structures $\xi_\pm$ are \emph{hypertight}. If moreover $\Phi$ is Anosov and $(\alpha_s, \alpha_u)$ is a defining pair for it, then we further have
\begin{align*}
    \alpha_u(R_-) &> 0, \\
    \alpha_u(R_+) &> 0,
\end{align*}
and $R_\pm$ are both positively transverse to $\mathcal{F}^{ws}$. The details of these computations can be found in~\cite[Section 3.2]{Mas23}. Following~\cite{Hoz24}, we say that a vector field $R$ is \textbf{dynamically negative} if it is positively transverse to $E^{ws}$ and negatively transverse to $E^{wu}$. The existence of dynamically negative/positive Reeb flows for the standard construction are crucially used in~\cite{CLMM}.

Anosov and semi-Anosov flows can also be characterized by the transversality properties of certain Reeb vector fields for bicontact structures supporting them. More precisely, Hozoori showed in~\cite{Hoz24, Hoz24-adapt}:

\begin{thm} \label{thm:reebanosov}
    Let $\Phi$ be a projectively Anosov flow. 
    \begin{enumerate}
        \item $\Phi$ is Anosov if and only if it is supported by a bicontact structure $(\xi_-, \xi_+)$ such that $\xi_+$ admits a dynamically negative Reeb vector field.
        \item Moreover, if $\Phi$ is Anosov then there exists a supporting bicontact structure $(\xi_-, \xi_+)$ such that $\xi_-$ contains a Reeb vector field for $\xi_+$.
    \end{enumerate}
\end{thm}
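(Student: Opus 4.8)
The plan is to treat the two implications of (1) separately and then bootstrap to (2). The forward direction of (1) is essentially contained in the computations recalled just above: if $\Phi$ is Anosov and $(\alpha_s,\alpha_u)$ is a defining pair, the standard construction produces contact forms $\alpha_\pm$ whose Reeb fields satisfy $\alpha_s(R_+)<0$ and $\alpha_u(R_+)>0$. Since $E^{ws}=\ker\alpha_u$ and $E^{wu}=\ker\alpha_s$, this says exactly that $R_+$ is positively transverse to $E^{ws}$ and negatively transverse to $E^{wu}$, i.e.\ dynamically negative. So for this direction I would only need to record that the smoothing built into the standard construction preserves these strict inequalities.

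For the converse of (1), I start from a bicontact structure $(\xi_-,\xi_+)$ supporting the a priori only projectively Anosov flow $\Phi$, together with a dynamically negative Reeb field $R$ for $\xi_+$. Let $(\alpha_s,\alpha_u)$ be an intrinsic defining pair, so that $\ker\alpha_u=E^{ws}$, $\ker\alpha_s=E^{wu}$, $\mathcal{L}_X\alpha_s=r_s\alpha_s$ and $\mathcal{L}_X\alpha_u=r_u\alpha_u$. Because $X\in\xi_+$, the contact form $\alpha_+$ annihilates $X$ and hence decomposes as $\alpha_+=q\,\alpha_u+p\,\alpha_s$; using the orientation conventions for a supporting bicontact structure (Figure~\ref{fig:Anosovorientation}) one checks $q>0$ and $p<0$, so after rescaling $\alpha_s,\alpha_u$ by positive functions (which only shifts $r_s,r_u$ by Lie derivatives along $X$) I may assume $\alpha_+=\alpha_u-\alpha_s$. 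The key computation is then a one-line consequence of the Reeb equation: from $\alpha_u(X)=\alpha_s(X)=0$ and Cartan's formula, $\iota_X d\alpha_u=r_u\alpha_u$ and $\iota_X d\alpha_s=r_s\alpha_s$, whence $\iota_X d\alpha_+=r_u\alpha_u-r_s\alpha_s$. Pairing with $R$ and using $\iota_R d\alpha_+=0$,
\[
0=d\alpha_+(X,R)=r_u\,\alpha_u(R)-r_s\,\alpha_s(R).
\]

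Now I combine this identity with two sign inputs. Dynamical negativity gives $\alpha_u(R)>0$ and $\alpha_s(R)<0$, while the positivity of $\xi_+$ (that is, $\alpha_+\wedge d\alpha_+>0$) forces $r_u-r_s>0$ pointwise. If $r_u\le 0$ somewhere, the identity gives $r_s\,\alpha_s(R)=r_u\,\alpha_u(R)\le 0$, hence $r_s\ge 0$ (as $\alpha_s(R)<0$), contradicting $r_s<r_u\le 0$; therefore $r_u>0$ everywhere, and then $r_s\,\alpha_s(R)=r_u\,\alpha_u(R)>0$ forces $r_s<0$. Thus $r_s<0<r_u$ pointwise, and Proposition~\ref{prop:defpair}(3) shows $\Phi$ is Anosov. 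The delicate points are the regularity---the intrinsic forms $\alpha_s,\alpha_u$ are merely continuous and differentiable along $X$, so all Lie-derivative identities must be read in that sense and combined with the approximation scheme of the standard construction---and the sign normalization $q>0$, $p<0$, which I expect to be the main conceptual obstacle: it amounts to showing that any supporting $\xi_+$ sits in the correct quadrant relative to the invariant weak bundles.

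For (2) I would engineer the bicontact structure so that the Reeb field actually lies in $\xi_-$. Normalizing $\xi_+=\ker(\alpha_u-\alpha_s)$, the Reeb field has weak-stable and weak-unstable components proportional to $-r_u$ and $-r_s$; since $R$ is transverse to $\xi_+\ni X$, it lies in $\xi_-$ only if $\xi_-=\langle X,R\rangle$, and this plane is negative contact exactly when the function
\[
(r_u-r_s)+\mathcal{L}_X\log(-r_u/r_s)
\]
is positive. The Lie-derivative term is a coboundary, so it integrates to zero against every $\Phi$-invariant measure; hence this expression has the same, strictly positive, invariant-measure averages as $r_u-r_s$. The strategy is then to use the remaining freedom in the defining pair (rescaling $\alpha_s,\alpha_u$, which alters $r_s,r_u$ by coboundaries) to make the expression pointwise positive, via a Livšic-type theorem that a function with strictly positive averages over all invariant measures is cohomologous to a strictly positive one. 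Because the expression depends nonlinearly on the rates through the ratio $-r_u/r_s$, this is not a direct application of such a theorem, and solving the resulting cohomological inequality is the technical heart of (2)---the step where I would expect to spend most of the effort.
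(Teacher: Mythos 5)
First, note that the survey does not actually prove Theorem~\ref{thm:reebanosov}: it is quoted from Hozoori's work \cite{Hoz24, Hoz24-adapt}, so the only material in the paper to compare against is the computation in the ``Reeb vector fields'' subsection showing that the standard construction yields $\alpha_s(R_+)<0$ and $\alpha_u(R_+)>0$. Your forward direction of (1) is exactly that computation. Your converse of (1) is essentially correct and in the spirit of the known argument: the identity $0=d\alpha_+(X,R)=r_u\,\alpha_u(R)-r_s\,\alpha_s(R)$, combined with $\alpha_u(R)>0$, $\alpha_s(R)<0$ (dynamical negativity) and $r_u-r_s>0$ (which, as you rightly note, must be rederived from $\alpha_+\wedge d\alpha_+>0$ after rescaling $\alpha_s,\alpha_u$ by \emph{different} positive functions, since the defining-pair inequality is not automatically preserved), does force $r_s<0<r_u$, and Proposition~\ref{prop:defpair}(3) applies. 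The sign normalization $q>0$, $p<0$ that you flag as the main obstacle is the standard fact that the contact planes of a supporting bicontact structure are transverse to both weak bundles and separated by them (this is the content of Figure~\ref{fig:Anosovorientation}, and follows from the twisting of $\xi_\pm$ along the Legendrian $X$ against the invariance of $E^{ws}$ and $E^{wu}$); it needs to be recorded but is not a genuine difficulty.

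Part (2), however, is not proved. You correctly reduce it to the pointwise inequality $(r_u-r_s)+X\cdot\log(-r_u/r_s)>0$ for a suitably normalized defining pair, and correctly observe that every $\Phi$-invariant measure assigns this quantity a positive average. But passing from positive averages to a pointwise positive representative is a Liv\v{s}ic/Lopes--Thieullen-type statement for the \emph{linear} cohomological equation, whereas here the unknown rescalings enter both linearly in $r_u-r_s$ and nonlinearly inside the logarithm, so the two terms cannot be adjusted independently; you acknowledge that you do not know how to close this step. The statement is immediate in the volume-preserving case (a defining pair with $r_s+r_u=0$ kills the logarithmic term, consistent with the paper's later discussion of volume-preserving flows), but the general case is precisely the content of Hozoori's theorem and requires an input your sketch does not supply. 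As it stands, (1) is a proof modulo a standard lemma, while (2) is a reformulation of the problem rather than a solution.
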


The proof of~\cite[Theorem 6.3]{Hoz24} can also be adapted to the semi-Anosov case to show:

\begin{thm}
    Let $\Phi$ be a projectively Anosov flow. Then $\Phi$ is semi-Anosov if and only if it is supported by a bicontact structure $(\xi_-, \xi_+)$ such that $\xi_+$ admits a Reeb vector field which is negatively transverse to $E^{wu}$.
\end{thm}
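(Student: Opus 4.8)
The plan is to prove both implications, with the forward direction being a direct consequence of the material already assembled and the reverse direction being the analogue, for the weak unstable bundle alone, of Hozoori's argument for Theorem~\ref{thm:reebanosov}(1) in \cite{Hoz24}. For the forward implication, suppose $\Phi$ is semi-Anosov and fix a defining pair $(\alpha_s, \alpha_u)$. The standard construction produces a supporting bicontact structure $(\xi_-, \xi_+)$, and the computation recalled above in the discussion of Reeb vector fields for the standard construction shows that the Reeb field $R_+$ of the resulting contact form $\alpha_+$ satisfies $\alpha_s(R_+) < 0$. Since $\ker \alpha_s = E^{wu}$ and $\alpha_s$ coorients $E^{wu}$, this is exactly the statement that $R_+$ is negatively transverse to $E^{wu}$, so nothing more is needed here.

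For the reverse implication, assume $\Phi$ is projectively Anosov and supported by a bicontact structure $(\xi_-, \xi_+)$ for which $\xi_+$ carries a Reeb field $R_+$ negatively transverse to $E^{wu}$; thus $R_+$ is the Reeb field of a contact form $\beta$ with $\ker\beta = \xi_+$. Fix a defining pair $(\alpha_s, \alpha_u)$ and an adapted metric with unit unstable section $\overline e_u$. Since $X \in \xi_+$ we have $\beta(X)=0$, so $\beta$ descends to $N_X$ and $b := \beta(\overline e_u)$ is a well-defined function, nonzero because the contact plane $\xi_+$ is transverse to the integrable bundle $E^{wu}$. The key step is the pointwise identity $r_u = P - X(\log|b|)$, where $P := (\beta\wedge d\beta)(R_+, X, \overline e_u)/\beta(\overline e_u)$. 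I would obtain it by differentiating $t\mapsto \overline\beta(d\phi^t\,\overline e_u)$ at $t=0$: since $\overline E^u$ is $d\phi^t$-invariant one has $d\phi^t\overline e_u = e^{\int_0^t (r_u\circ\phi^\tau)\,d\tau}\,\overline e_u\circ\phi^t$, while the left-hand derivative equals $(\mathcal L_X\beta)(\overline e_u)$. Combining the two and using $\iota_{R_+}(\beta\wedge d\beta)=d\beta$ together with $\iota_{R_+}d\beta=0$ and $\beta(X)=0$ to rewrite $\mathcal L_X\beta = \iota_X d\beta = \iota_X\iota_{R_+}(\beta\wedge d\beta)$ yields the claimed formula.

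It then remains to check that $P > 0$ pointwise. Writing the projection of $R_+$ to $N_X$ as $\overline{R_+} = p\,\overline e_s + q\,\overline e_u$, the normalization $\overline\alpha_s(\overline e_s)=1$ gives $p = \alpha_s(R_+) < 0$, and a short computation with the orientation induced on $N_X$ shows that the numerator $(\beta\wedge d\beta)(R_+, X, \overline e_u)$ has the sign of $-p = -\alpha_s(R_+) > 0$, while the coorientation conventions for a bicontact structure supporting $\Phi$ (as in Figure~\ref{fig:Anosovorientation}) force $b=\beta(\overline e_u)>0$; hence $P>0$. Granting this, I would rescale the \emph{entire} defining pair by the positive function $b$: the pair $(b\,\alpha_s, b\,\alpha_u)$ is again a defining pair, its unstable rate becomes $r_u + X(\log b) = P > 0$, and since both rates are shifted by the same coboundary $X(\log b)$ the domination inequality $r_s < r_u$ is preserved. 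By Proposition~\ref{prop:defpair}(2) this shows $\Phi$ is semi-Anosov.

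The main obstacle is the bookkeeping in the sign of $P$: one must carefully track the coorientations of $\xi_+$ and of $E^{wu}$ against the orientation of $N_X$ to be sure that the transversality hypothesis $\alpha_s(R_+)<0$ produces the correct sign, as it is precisely this calibration that distinguishes the relevant half of the Anosov statement in Theorem~\ref{thm:reebanosov}. A secondary, purely technical, point is that $\alpha_s$, $\alpha_u$ and $\beta$ are in general only continuous and continuously differentiable along $X$ rather than genuinely $\mathcal C^1$; as elsewhere in the paper, this is handled by the smoothing techniques of \cite{Hoz24, Mas23}, which leave the above first-order computations unchanged in the limit.
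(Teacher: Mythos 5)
Your proof is correct and follows exactly the route the paper prescribes: the forward direction is the computation $\alpha_s(R_+)<0$ already recorded for the standard construction, and the reverse direction is the adaptation of Hozoori's argument for \cite[Theorem 6.3]{Hoz24} that the paper invokes without writing out, namely establishing $r_u = P - X(\log b)$ with $P>0$ and absorbing the coboundary by rescaling the defining pair so that Proposition~\ref{prop:defpair}(2) applies. The sign bookkeeping you flag does work out: $(\beta\wedge d\beta)(R_+,X,\overline{e}_u) = d\beta(X,\overline{e}_u)$ has the sign of $-\alpha_s(R_+)$ against the orientation $\theta\wedge\alpha_s\wedge\alpha_u>0$, and $b>0$ by the coorientation conventions of Figure~\ref{fig:Anosovorientation}.
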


Similar statements hold for Reeb vector fields of $\xi_-$.

\section{Anosov Liouville structures}

In the previous section, we detailed a construction of Liouville structures from (semi-)Anosov flows. However, this construction depends on various choices (defining pairs, appropriate smoothings). We now introduce a more abstract notion of Liouville structures that generalizes the standard construction of Mitsumatsu. 

        \subsection{Definition and contractibility}

The following definition already appeared in~\cite{Mas23, CLMM}. 

\begin{defn}
    An \textbf{Anosov Liouville structure} (or AL structure) on $\R \times M$ supporting an Anosov flow $\Phi$ is a Liouville structure $(\omega, \lambda)$ such that $\lambda$ is of the form of~\eqref{eq:expliouv} for a transverse Liouville pair $(\alpha_-, \alpha_+)$ supporting $\Phi$.
\end{defn}

This notion should be called an \emph{exponential} Anosov Liouville structure; there is a similar definition of \emph{linear} Anosov Liouville structure on the Liouville domain $[-1,1] \times M$. As noted before, the standard construction yields exponential and linear Liouville structures which are equivalent (exact symplectomorphic after completion). However, we only state the following theorem for exponential AL structures, as we do not know if the equivalent statement holds for linear AL structures.

\begin{thm}[\cite{Mas23}] \label{thm:ALcontr}
    The space of (exponential) AL structures supporting a given Anosov flow $\Phi$ on $M$ is contractible.
\end{thm}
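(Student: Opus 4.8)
The plan is to describe the space $\mathcal{A}(\Phi)$ of exponential AL structures supporting $\Phi$ as concretely as possible and then exhibit an explicit deformation retraction onto a fixed reference structure. Since an element of $\mathcal{A}(\Phi)$ is determined by a transverse exponential Liouville pair $(\alpha_-, \alpha_+)$ whose bicontact structure supports $\Phi$, and since $X \in \xi_- \cap \xi_+$ is equivalent to $\alpha_s(X) = \alpha_u(X) = 0$ for $\alpha_s = \tfrac12(\alpha_- - \alpha_+)$ and $\alpha_u = \tfrac12(\alpha_- + \alpha_+)$, I would parametrize $\mathcal{A}(\Phi)$ by pairs $(\alpha_s, \alpha_u)$ of $1$-forms annihilating $X$, subject to the transversality $\overline{\alpha}_s \wedge \overline{\alpha}_u > 0$ together with the contact conditions for $\alpha_\pm$ and the exponential Liouville nondegeneracy of $\lambda = e^{-s}\alpha_- + e^s\alpha_+$.

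First I would carry out the crucial reduction: because $\alpha_s(X) = \alpha_u(X) = 0$, contraction with the nowhere-vanishing $X$ is injective on top-degree forms, and Cartan's formula gives $\iota_X d\beta = \mathcal{L}_X\beta$ for any $\beta$ with $\beta(X) = 0$. Consequently every defining condition — the two contact inequalities and the Liouville nondegeneracy, which on expanding $d\lambda \wedge d\lambda$ reduces on the $3$-manifold to the positivity for all $s$ of a quadratic form in $e^{2s}$ with coefficients $\alpha_+ \wedge d\alpha_+$, $-\alpha_-\wedge d\alpha_-$ and a cross term — can be rewritten as a pointwise inequality involving only the values of $\overline{\alpha}_s, \overline{\alpha}_u$ and their flow derivatives $\overline{\mathcal{L}_X \alpha_s}, \overline{\mathcal{L}_X \alpha_u}$ on $N_X$. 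The point of this reduction is that the resulting differential relation constrains the derivatives of $(\alpha_s, \alpha_u)$ only along the one-dimensional foliation by flow lines and involves no transverse derivatives; this is exactly why one can hope for contractibility of the solution space despite the rigidity of the tight contact structures involved, for which no contact h-principle is available.

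Next, working in a fixed (only $\mathcal{C}^1$) frame of $N_X = \overline{E}^s \oplus \overline{E}^u$ adapted to $\Phi$, I would split the data of $(\alpha_s, \alpha_u)$ into a ``conformal'' part (the diagonal scalings, governing the expansion rates $r_s, r_u$) and a ``directional'' part (the off-diagonal coefficients measuring the tilt of $\ker \alpha_s, \ker\alpha_u$ away from the weak bundles $E^{wu}, E^{ws}$). The retraction then proceeds in two stages. In the conformal direction the argument is clean and convex: for kernels fixed along the splitting one has $r_s = r_s^{0} + X(\rho_s)$ and $r_u = r_u^{0} + X(\rho_u)$, where $\rho_s, \rho_u$ are the logarithms of the scalings, so the Anosov conditions $r_s < 0 < r_u$ become affine inequalities in $(\rho_s, \rho_u)$ and cut out a convex, hence contractible, set; a straight-line homotopy in $(\rho_s, \rho_u)$ contracts this part to the reference. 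The directional part is handled by scaling the off-diagonal coefficients towards their reference values, using the domination $r_u^0 - r_s^0 > 0$ to control the indefinite cross terms and keep the quadratic positivity open throughout. Combining the two stages — and invoking the Gray--Moser stability for Liouville domains recalled above to promote the resulting path of structures to a family of exact symplectomorphisms — yields the desired contraction onto the reference, proving the statement.

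The main obstacle is the directional stage. The contact and Liouville conditions are quadratic in the $1$-jet, so they are not preserved by naive linear interpolation, and the natural ``canonical'' endpoint — the pair with $\ker\alpha_s = E^{wu}$, $\ker\alpha_u = E^{ws}$ — is only $\mathcal{C}^1$ and hence lies on the boundary of the space of genuinely smooth structures. I would therefore retract not onto this canonical pair but onto a fixed smooth reference AL structure obtained from the standard construction with a chosen smoothing, and I would choose the directional homotopy (for instance by scaling the off-diagonal terms at a controlled rate, or by first straightening via a flow-box normalization) so that the domination inequality dominates the cross terms uniformly and the open quadratic positivity persists — in particular the refined Liouville inequality requiring the cross term to exceed $-2\sqrt{(\alpha_+\wedge d\alpha_+)(-\alpha_-\wedge d\alpha_-)}$. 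Verifying this persistence uniformly over the compact manifold $M$ is the technical heart of the proof.
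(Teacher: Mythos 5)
Your setup is sound: reducing every condition to pointwise inequalities in $\overline{\alpha}_s,\overline{\alpha}_u$ and their Lie derivatives along $X$ (via $\iota_X d\beta=\mathcal{L}_X\beta$ for $\beta(X)=0$), and identifying the Liouville condition as the positivity for all $s$ of a quadratic in $e^{2s}$, i.e.\ the cross term exceeding $-2\sqrt{(\alpha_+\wedge d\alpha_+)(-\alpha_-\wedge d\alpha_-)}$, is exactly the right framework. The conformal stage is also fine as far as it goes. The genuine gap is the directional stage, which you yourself flag as ``the technical heart'' but for which you supply only an assertion. Writing $\alpha_s,\alpha_u$ in a frame adapted to $E^{ws}\oplus E^{wu}$ and scaling the off-diagonal coefficients toward a reference does not manifestly preserve the two contact inequalities, let alone the quadratic Liouville inequality: these conditions are quadratic in the $1$-jet, they encode the \emph{twisting} of $\ker\alpha_\pm$ along flow lines relative to the invariant bundles, and for a plane field whose tilt away from the weak bundles is large (there is no a priori bound on this tilt for a general AL structure) the domination $r^0_u-r^0_s>0$ gives no uniform control over the indefinite cross terms produced by $X\cdot b$, $X\cdot c$ for the off-diagonal coefficients $b,c$. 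The contractibility of the space of bicontact structures supporting $\Phi$ is itself a nontrivial theorem (\cite[Theorem~4.4]{Mas23}), and your directional stage is essentially re-asserting it plus a lift to Liouville pairs without an argument; a flow-box normalization cannot help since the constraint is global (coherent twisting around closed orbits).

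Both known proofs avoid this head-on deformation by locating a hidden convexity. In \cite{Mas23} the problem is split differently: first the space of supporting bicontact structures is contracted, and then, over a \emph{fixed} bicontact structure, one observes that rescaling $\alpha_\pm\mapsto e^{f_\pm}\alpha_\pm$ changes the normalized Liouville inequality only through the sum $f_-+f_+$, and affinely in $d(f_-+f_+)$, so the fiber is convex. The proof given in this paper for the generalized statement (Theorem~\ref{thm:gcontract}, via Lemma~\ref{lem:gcontract*}) is different again and even more economical: one translates $\lambda\mapsto\lambda+\kappa\alpha_u$ for a fixed $u$-defining form $\alpha_u$ and $\kappa\gg 0$, after which the dominant contribution to every positivity condition is the fixed term $\kappa\,\omega\wedge d\alpha_u>0$ (resp.\ $\kappa\langle\alpha_\pm,\alpha_u\rangle$), so that straight-line interpolation between any two structures stays in the space; this uses the $u$-taming condition~\eqref{eq:alphau}, which is automatic for exponential pairs. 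To repair your argument you would need either to import \cite[Theorem~4.4]{Mas23} and the $f_-+f_+$ convexity as black boxes, or to replace the directional homotopy by one of these translation/convexity tricks; as written, the step that must ``keep the quadratic positivity open throughout'' is precisely what is not established. (A minor point: Gray--Moser stability is not needed to prove contractibility of the space of structures; it only converts a path of structures into an isotopy, which is not what the theorem asserts.)
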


As a result, the standard construction of Mitsumatsu yields a Liouville structure which is canonical and independent of all choices up to homotopy. In Section~\ref{sec:gAL} below, we propose a generalization of the notion of (exponential or linear) Liouville pair which is more natural from the perspective of symplectic geometry and we show that those `generalized AL structures' form a contractible space. As a result, all linear AL structures supporting a given flow are also equivalent, through a Liouville homotopy that might not remain within linear AL structures. Related results on the equivalence between linear/exponential and more general Liouville structures can be found in~\cite{Hoz24-reg}. The same results hold for Liouville structures supporting semi-Anosov flows.

\medskip

An important consequence of Theorem~\ref{thm:ALcontr} is that the symplectic invariants of an AL structure supporting an Anosov flow $\Phi$ are invariants of $\Phi$ itself. This includes all the Floer-theoretic invariants, e.g., symplectic cohomology and wrapped Fukaya category. These invariants are thoroughly studied in~\cite{CLMM}. The methods involved in the proof of Theorem~\ref{thm:ALcontr} and related results actually show that those AL structures are invariants of \emph{homotopy classes} of Anosov flows, in the sense that a path of Anosov flows `lifts' to a path of AL structures (see~\cite{Mas23} for more details). It is natural to ask if a similar invariance holds for two \emph{orbit equivalent} Anosov flows:

\begin{question} \label{quest:orbit1}
    If $\Phi_0$, $\Phi_1$ are two Anosov flows on $M$ with associated (linear, exponential, generalized) AL structures $(\omega_0, \lambda_0)$ and  $(\omega_1, \lambda_1)$. If $\Phi_0$ and $\Phi_1$ are orbit equivalent, are $(\omega_0, \lambda_0)$ and  $(\omega_1, \lambda_1)$ exact symplectomorphic?
\end{question}

In work in progress with Jonathan Bowden, we give an affirmative answer to this question, see the next subsection  for more details. In particular, all the aforementioned invariants are \emph{topological invariants} for Anosov and could be relevant to the classification of Anosov flows. It would be interesting to investigate if Anosov flows can be \emph{characterized} by their induced AL structures:

\begin{question} \label{quest:orbit2}
    With the notations of Question~\ref{quest:orbit1}, if $(\omega_0, \lambda_0)$ and  $(\omega_1, \lambda_1)$ exact symplectomorphic, are $\Phi_0$ and $\Phi_1$ orbit equivalent ?
\end{question}

By the work of Barthelm\'{e}--Mann--Fenley~\cite{BFM23}, transitive Anosov flows are essentially characterized by the free homotopy classes of their closed orbits (at least on an atoroidal manifold). This motivates the following

\begin{question} \label{quest:orbit5}
    Is it possible to recover the set of free homotopy classes of closed orbits of an Anosov flow from the Floer-theoretic invariants of its associated AL structure?
\end{question}

Notice that the closed orbits of the Anosov flow are Legendrian for both contact structures $\xi_\pm$ coming from a supporting bicontact structure. Moreover, if $\gamma$ is such a closed orbit, then $L_\gamma \coloneqq [-1,1] \times \gamma \subset [-1,1] \times M$ is \emph{Lagrangian} for any AL structure $(\omega, \lambda)$ supporting the flow, in the sense that $\omega_{\vert L_\gamma} \equiv 0$. We actually have $\lambda_{\vert L_\gamma} \equiv 0$, so that those are (strictly) exact Lagrangians, and constitute objects of the wrapped Fukaya category associated with the flow, see~\cite{CLMM}. Perhaps one could upgrade this category to include the information of the free homotopy classes of the (exact) Lagrangian submanifolds. This could potentially provide a path to an affirmative answer to Question~\ref{quest:orbit5}.

            \subsection{Effect of orbit equivalence} \label{sec:orbit}

In this section, we consider two Anosov flows $\Phi_0$ and $\Phi_1$ on $M$, and we assume that their Anosov splittings are orientable. We further assume that $\Phi_0$ and $\Phi_1$ are orbit equivalent, via an orbit equivalence $h : M \rightarrow M$. We do not assume that $h$ is orientation-preserving. It is well known that $h$ sends the weak stable and unstable foliations of $\Phi_0$ to the ones of $\Phi_1$, but this does not hold for the strong ones. We will assume that the weak foliations of $\Phi_0$ and $\Phi_1$ are oriented and that $h$ preserves their orientations. We further consider AL structures $(\omega_0, \lambda_0)$ and $(\omega_1, \lambda_1)$ on $V = [-1,1] \times M$ supporting $\Phi_0$ and $\Phi_1$, respectively, for the chosen orientations (we might have to consider two different orientations on $M$ for the two flows). The following result which answers Question~\ref{quest:orbit1} positively will appear in~\cite{BM}:

\begin{thm} \label{thm:orbiteq}
    With the previous notations and assumptions, $(\omega_0, \lambda_0)$ and $(\omega_1, \lambda_1)$ are exact symplectomorphic, via an exact symplectomorphism isotopic to $\mathrm{id} \times h$.
\end{thm}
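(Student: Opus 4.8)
The plan is to reduce the statement to the contractibility theorem (Theorem~\ref{thm:ALcontr}) by first observing that, up to exact symplectomorphism, an Anosov Liouville structure on $V = [-1,1]\times M$ depends only on the \emph{oriented weak foliations} $(\mathcal{F}^{ws},\mathcal{F}^{wu})$ of the underlying flow. Indeed, a defining pair $(\alpha_s,\alpha_u)$ satisfies $\ker\alpha_s = E^{wu}$ and $\ker\alpha_u = E^{ws}$, so $\alpha_s,\alpha_u$ are determined by the oriented weak foliations up to multiplication by positive functions; moreover the flow line field is recovered as $\langle X\rangle = E^{ws}\cap E^{wu}$, and reparametrising $X$ only rescales the rates $r_s,r_u$ without changing their signs, hence does not affect the class of defining pairs. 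Consequently any two defining pairs with the same oriented weak foliations lie in the same space of AL structures, which is contractible, and Theorem~\ref{thm:ALcontr} together with the Liouville analogue of Gray--Moser stability upgrades this to an exact symplectomorphism through an isotopy starting at the identity. The problem thus becomes: transport the AL structure of $\Phi_1$ to that of $\Phi_0$ along a map realising the correspondence of oriented weak foliations induced by $h$.

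The essential geometric input is that $H \coloneqq \mathrm{id}\times h$ preserves the relevant structures \emph{topologically}. Since $h$ carries the oriented weak leaves of $\Phi_0$ to those of $\Phi_1$ and the orbits of $\Phi_0$ to those of $\Phi_1$ (as $\mathcal{F}^{ws}\cap\mathcal{F}^{wu}$ is the orbit foliation), the homeomorphism $H$ sends the exact Lagrangian cylinders $L^0_\gamma = [-1,1]\times\gamma$ of $(\omega_0,\lambda_0)$ to the Lagrangian cylinders $L^1_{h(\gamma)}$ of $(\omega_1,\lambda_1)$, and matches the two boundary contact structures up to the weak-foliation correspondence. In other words, $H$ respects the Lagrangian foliation of $V$ by these cylinders together with the contact data along $\partial V$. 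The plan is to promote $H$, within its isotopy class, to a genuine diffeomorphism $F$ respecting this Lagrangian foliation and restricting to contactomorphisms on the boundary, using the $\mathcal{C}^1$-regularity of the weak bundles in dimension three (see~\cite{H94, Hoz24-reg}) to obtain smooth local product models in which the straightening can be carried out, and the Weinstein Lagrangian neighbourhood theorem near each cylinder to control the transverse directions.

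Granting such an $F$, the pullback $F^*(\omega_1,\lambda_1)$ is a smooth AL structure whose defining pair has kernels $E^{wu}_0$ and $E^{ws}_0$; by the first paragraph it supports a reparametrisation of $\Phi_0$ and therefore lies in the contractible space of AL structures supporting $\Phi_0$. Applying Theorem~\ref{thm:ALcontr} again yields an exact symplectomorphism $F_0\simeq\mathrm{id}$ with $F_0^*\big(F^*(\omega_1,\lambda_1)\big) = (\omega_0,\lambda_0)$ up to an exact term. The composite $F\circ F_0$ then satisfies $(F\circ F_0)^*(\omega_1,\lambda_1) = (\omega_0,\lambda_0)$, and since $F\simeq\mathrm{id}\times h$ and $F_0\simeq\mathrm{id}$ it is isotopic to $\mathrm{id}\times h$, as required.

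The main obstacle is the construction of the smooth model map $F$ in the second step. One cannot simply smooth $h$ on $M$: a diffeomorphism matching \emph{both} oriented weak foliations would send orbits to orbits and hence be a smooth orbit equivalence, but such maps are obstructed by rigidity phenomena, since a smooth orbit equivalence would rigidify the transverse expansion data (in the spirit of the results of Ghys quoted above). The non-smoothness of $h$ transverse to the flow is therefore unavoidable in dimension three, and the crux of the argument is to absorb it using the extra $[-1,1]$-direction of the thickening: the symplectic normal form transverse to the Lagrangian cylinders provides exactly the room needed to smooth the gluing of leaves while preserving $\omega$, at the cost of displacing individual orbits in the interior of $V$. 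Arranging this transverse smoothing to be compatible with the Liouville (rather than merely symplectic) structure, and controlling its behaviour near the compact leaves corresponding to closed orbits, is where the genuine work of~\cite{BM} lies.
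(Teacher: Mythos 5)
The paper does not actually prove Theorem~\ref{thm:orbiteq}: it is stated as a result that ``will appear in~\cite{BM}'', with only a one-sentence indication of strategy, namely that it follows from \emph{a more general result about Liouville structures induced by suitable taut foliations, applied to the weak-unstable foliations} of $\Phi_0$ and $\Phi_1$. So there is no in-paper argument to match yours against, and the relevant question is whether your outline closes the gap on its own. It does not, and you say so yourself: the construction of the smooth model map $F$ ``is where the genuine work of~\cite{BM} lies.'' That is not a small technical step to be deferred --- it is the entire content of the theorem. Everything before and after it (contractibility of the space of AL structures supporting a fixed flow, Liouville--Moser stability) is standard and already in the paper; the theorem is precisely the assertion that the orbit equivalence $h$, which is merely a homeomorphism, can be traded for an honest exact symplectomorphism.

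More pointedly, your strategy aims at a diffeomorphism $F$ of $[-1,1]\times M$ that respects the Lagrangian cylinders $[-1,1]\times\gamma$ over \emph{orbits} and matches \emph{both} oriented weak foliations. As you correctly observe, a smooth map of $M$ matching both weak foliations would be a smooth orbit equivalence, which is obstructed by Ghys-type rigidity; your proposed remedy --- absorbing the non-smoothness into the $[-1,1]$-direction via Weinstein neighbourhoods of the cylinders --- is asserted but not constructed, and it is unclear how a transverse symplectic normal form would repair a map that is not even $\mathcal{C}^1$ along a dense set of leaves. The paper's hint points to a structurally different and more robust reduction: show that the Liouville structure depends, up to exact symplectomorphism, on a \emph{single} taut (weak-unstable) foliation rather than on the pair of weak foliations or the orbit foliation. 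Smoothing a homeomorphism that preserves one codimension-one foliation is a far more tractable problem than smoothing an orbit equivalence, and it sidesteps the rigidity obstruction entirely. Your first paragraph, which reduces the AL structure to the data of \emph{both} oriented weak foliations, therefore does not reduce far enough: the key lemma you would need is that the exact symplectomorphism type is already determined by $\mathcal{F}^{wu}$ alone (together with its coorientation), and that is not addressed in your proposal.
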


This will be a consequence of a more general result about Liouville structures induced by suitable taut foliations, applied to the weak-unstable foliations of $\Phi_0$ and $\Phi_1$. We immediately obtain:

\begin{cor} \label{cor:bicont}
    If $(\xi^0_-, \xi^0_+)$ and $(\xi^1_-, \xi^1_+)$ are bicontact structures supporting $\Phi_0$ and $\Phi_1$, respectively, then $\xi^0_-$ and $\xi^1_-$ (resp.~$\xi^0_+$ and $\xi^1_+$) are contactomorphic (by possibly different contactomorphisms).
\end{cor}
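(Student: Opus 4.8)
The plan is to deduce Corollary~\ref{cor:bicont} directly from Theorem~\ref{thm:orbiteq}, exploiting the fact that an AL structure on $V = [-1,1] \times M$ restricts on each boundary component to one of the two contact structures of a supporting bicontact pair. First I would recall the boundary behavior established in the standard construction of Section~\ref{sec:Mit}: if $(\omega_i, \lambda_i)$ is the AL structure supporting $\Phi_i$ arising from a transverse Liouville pair $(\alpha_-^i, \alpha_+^i)$, then $\lambda_i$ restricts to $\alpha_+^i$ on $\{1\} \times M$ and (up to the usual orientation reversal) to $\alpha_-^i$ on $\{-1\} \times M$, so that $\ker \lambda_i|_{\{\pm 1\} \times M} = \xi_\pm^i$. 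Thus the two boundary components of $(V, \lambda_i)$ are exactly $(M, \xi_+^i)$ and $(-M, \xi_-^i)$ as contact manifolds.

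Next I would invoke Theorem~\ref{thm:orbiteq} to obtain an exact symplectomorphism $\Psi : (V, \lambda_0) \to (V, \lambda_1)$ isotopic to $\mathrm{id} \times h$. Since an exact symplectomorphism satisfies $\Psi^* \lambda_1 = \lambda_0 + df$ with $f$ compactly supported away from $\partial V$, the restriction of $\Psi$ to $\partial V$ is a contactomorphism between the induced boundary contact structures. Because $\Psi$ is isotopic to $\mathrm{id} \times h$ and $h$ preserves the orientations of the weak foliations (hence maps $\xi_+^0$-type boundary to $\xi_+^1$-type boundary and $\xi_-^0$ to $\xi_-^1$), the boundary contactomorphism decomposes into a piece on $\{1\} \times M$ realizing $\xi_+^0 \cong \xi_+^1$ and a piece on $\{-1\} \times M$ realizing $\xi_-^0 \cong \xi_-^1$. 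This gives the contactomorphisms claimed in the corollary.

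The last point to address is that Corollary~\ref{cor:bicont} is stated for \emph{arbitrary} supporting bicontact structures, not only those arising from the standard construction. To bridge this gap I would invoke Theorem~\ref{thm:ALcontr}: the space of AL structures supporting a fixed Anosov flow is contractible, and any two bicontact structures $(\xi_-^i, \xi_+^i)$ and $(\bar\xi_-^i, \bar\xi_+^i)$ supporting $\Phi_i$ are connected by a path of supporting bicontact structures (by the homotopy-equivalence results of~\cite{Mas23} relating Anosov flows to their bicontact structures). Applying Gray stability along such a path yields contactomorphisms $\xi_\pm^i \cong \bar\xi_\pm^i$, so the conclusion for the standard-construction bicontact structures propagates to all supporting ones.

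The main obstacle I anticipate is the careful bookkeeping of orientations and coorientations on the two boundary components. Since $h$ is not assumed orientation-preserving on $M$, and one may need to use different orientations on $M$ for the two flows (as the statement of Section~\ref{sec:orbit} warns), one must check that $\Psi$ genuinely sends the $\xi_+$-boundary of $(V,\lambda_0)$ to the $\xi_+$-boundary of $(V,\lambda_1)$ rather than swapping the two components; this is precisely where the hypothesis that $h$ preserves the orientations of the weak foliations is essential, and verifying the matching of coorientations under the boundary restriction of $\Psi$ is the delicate step.
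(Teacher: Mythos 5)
Your proposal is correct and follows essentially the same route as the paper, which derives the corollary immediately from Theorem~\ref{thm:orbiteq}: the exact symplectomorphism restricts to contactomorphisms on the two boundary components, and the passage from the standard-construction bicontact structures to arbitrary supporting ones is exactly the contractibility of the space of supporting bicontact structures combined with Gray stability, as you describe. The only cosmetic point is that the relevant contractibility statement is that of the space of supporting \emph{bicontact structures} (cited in the paper as~\cite[Theorem 4.4]{Mas23}) rather than Theorem~\ref{thm:ALcontr} itself, but you also invoke the correct fact in the same sentence.
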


We will also show a more precise result:

\begin{thm}[\cite{BM}]
    With the notations of Corollary~\ref{cor:bicont}, $(\xi^0_-, \xi^0_+)$ and $(\xi^1_-, \xi^1_+)$ are \emph{deformation equivalent} through bicontact structures. More precisely, there exists a diffeomorphism $\widetilde{h} : M \rightarrow M$, isotopic to $h$, such that $(\widetilde{h}^*\xi^1_-, \widetilde{h}^*\xi^1_+)$ and $(\xi^0_-, \xi^0_+)$ are homotopic through bicontact structures.
\end{thm}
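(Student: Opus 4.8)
The goal is to upgrade the contactomorphism statement of Corollary~\ref{cor:bicont} to a statement about the bicontact \emph{pair}, and to make it compatible with the ambient diffeomorphism $h$. My plan is to extract the diffeomorphism $\widetilde{h}$ and the homotopy directly from the exact symplectomorphism $\Psi$ of Theorem~\ref{thm:orbiteq}, by restricting the data on $V = [-1,1]\times M$ to its two boundary components and exploiting the product structure near $\partial V$.

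First I would recall that an AL structure $(\omega_i, \lambda_i)$ is built from a transverse linear Liouville pair $(\alpha^i_-, \alpha^i_+)$, so that along the two boundary components $\{-1\}\times M$ and $\{+1\}\times M$ the Liouville form $\lambda_i$ restricts to contact forms for $\xi^i_-$ and $\xi^i_+$ respectively (up to orientation conventions on $-M$ versus $M$). The exact symplectomorphism $\Psi$ from Theorem~\ref{thm:orbiteq} is isotopic to $\mathrm{id}\times h$; since a Liouville isomorphism carries the Liouville vector field of $\lambda_0$ to that of $\lambda_1$ and these are positively transverse to $\partial V$, the map $\Psi$ must send each boundary component of $(V,\lambda_0)$ to the corresponding boundary component of $(V,\lambda_1)$, inducing a contactomorphism there. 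Tracking which component maps to which (this is where orientation-preservation of $h$ on the weak foliations is used) yields boundary contactomorphisms $\Psi_- : (M,\xi^0_-)\to(M,\xi^1_-)$ and $\Psi_+ : (M,\xi^0_+)\to(M,\xi^1_+)$. The key point is that these two contactomorphisms are restrictions of a \emph{single} ambient map, so they agree as diffeomorphisms of $M$ up to isotopy; I would set $\widetilde{h}$ to be this common underlying diffeomorphism, which is isotopic to $h$ precisely because $\Psi$ is isotopic to $\mathrm{id}\times h$.

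Next, to produce the homotopy through bicontact structures, I would run the standard construction in a one-parameter family. The isotopy $(\Psi_t)_{t\in[0,1]}$ connecting $\mathrm{id}\times h$ to $\Psi$ transports the Liouville structure $(\omega_1,\lambda_1)$ back to a path of Liouville structures on $V$ interpolating between $(\Psi^*\omega_1,\Psi^*\lambda_1)=(\omega_0,\lambda_0)$ and $((\mathrm{id}\times h)^*\omega_1,(\mathrm{id}\times h)^*\lambda_1)$. Restricting this entire path to the two boundary components produces paths of contact forms, hence a path of bicontact structures, connecting $(\widetilde{h}^*\xi^1_-,\widetilde{h}^*\xi^1_+)$ to $(\xi^0_-,\xi^0_+)$. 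The transversality $\xi_-\pitchfork\xi_+$ is an open condition preserved along the path because it corresponds to the nondegeneracy of $d\lambda_t$ restricted to the relevant directions, which holds throughout since each $\lambda_t$ is Liouville; this is exactly the rephrasing of the bicontact condition in terms of $\overline{\alpha}_s\wedge\overline{\alpha}_u>0$ from Proposition~\ref{prop:defpair}.

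\textbf{The main obstacle.}
I expect the genuine difficulty to be verifying that the boundary restriction of the transported Liouville path really stays within \emph{transverse} bicontact structures, rather than merely within pairs of (individually) contact structures. The exact symplectomorphism respects $\omega$ and $\lambda$ on the interior, but a priori the intermediate forms $\Psi_t^*\lambda_1$ need not be of linear-Liouville-pair type, so one cannot read off transversality formally from the defining-pair inequalities; one must argue that the Liouville condition on $V$ forces positive transversality of the two boundary contact structures along the whole family. The cleanest way around this is presumably to appeal to the more general framework of Liouville structures induced by taut foliations alluded to after Theorem~\ref{thm:orbiteq}, in which the weak-unstable foliation provides a robust transverse direction that persists under the homotopy; making this persistence precise, and checking it is compatible with the orientation conventions needed to match $\xi_-$ with $\xi_-$ and $\xi_+$ with $\xi_+$, is the technical heart of the argument.
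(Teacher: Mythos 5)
The paper does not actually contain a proof of this theorem: it is stated as a result of the forthcoming work \cite{BM}, with no argument or sketch given beyond the remark that the related Theorem~\ref{thm:orbiteq} follows from a general result on Liouville structures induced by taut foliations. So there is nothing in the source to compare your proposal against, and I can only assess it on its own terms.

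On those terms, there is a genuine gap at the step you yourself flag as the ``technical heart,'' and it is more serious than your write-up suggests. The restrictions of $\Psi_t$ to the two boundary components $\{-1\}\times M$ and $\{+1\}\times M$ are two \emph{different} diffeomorphisms of $M$, say $\Psi_t^-$ and $\Psi_t^+$; consequently the intermediate pair produced by your construction is $\big((\Psi_t^-)^*\xi^1_-,\,(\Psi_t^+)^*\xi^1_+\big)$, i.e.\ the two contact structures are pulled back by \emph{different} maps, and there is no formal reason for them to be transverse. Your assertion that transversality ``corresponds to the nondegeneracy of $d\lambda_t$ restricted to the relevant directions, which holds throughout since each $\lambda_t$ is Liouville'' is not correct: the Liouville condition on $[-1,1]\times M$ does not force the two boundary contact structures to be transverse as plane fields on $M$ --- this is exactly why the paper introduces \emph{transverse} Liouville pairs as a distinguished subclass --- and the forms $\Psi_t^*\lambda_1$ are in any case not of linear-Liouville-pair type, so Proposition~\ref{prop:defpair} does not apply to them. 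The remark that transversality ``is an open condition preserved along the path'' is also not an argument: openness protects against small perturbations, not against a one-parameter family leaving the open set. Finally, the definition of $\widetilde{h}$ is not pinned down: $\Psi^-$ and $\Psi^+$ are merely isotopic, not equal, so there is no ``common underlying diffeomorphism''; choosing $\widetilde{h}=\Psi^-$ gives $\widetilde{h}^*\xi^1_-=\xi^0_-$ on the nose but says nothing about $\widetilde{h}^*\xi^1_+$, and symmetrically for $\Psi^+$. Some additional input --- plausibly the persistence of a common transverse direction coming from the weak-unstable foliations, or the contractibility of the space of bicontact structures supporting a fixed flow --- is needed to close the argument, and your proposal only gestures at this without supplying it.
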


In view of the correspondence between bicontact structures and projectively Anosov flows from Theorem~\ref{thm:projanosov}, we obtain:

\begin{cor}
    There exists a diffeomorphism $\widetilde{h} : M \rightarrow M$ isotopic to $h$ such that $\Phi_0$ and $\widetilde{h}^*\Phi_1$ are homotopic through projectively Anosov flows.
\end{cor}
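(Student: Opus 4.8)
The plan is to deduce the statement directly from the preceding theorem of~\cite{BM} together with the dictionary of Theorem~\ref{thm:projanosov}. Fix smooth nonsingular vector fields $X_0$ and $X_1$ generating $\Phi_0$ and $\Phi_1$, and choose bicontact structures $(\xi^0_-, \xi^0_+)$ and $(\xi^1_-, \xi^1_+)$ supporting them, so that $X_i \in \xi^i_- \cap \xi^i_+$. Applying the preceding theorem produces a diffeomorphism $\widetilde{h}$ isotopic to $h$ together with a homotopy of bicontact structures $(\eta^s_-, \eta^s_+)_{s \in [0,1]}$ with $(\eta^0_-, \eta^0_+) = (\xi^0_-, \xi^0_+)$ and $(\eta^1_-, \eta^1_+) = (\widetilde{h}^*\xi^1_-, \widetilde{h}^*\xi^1_+)$.

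The next step is to promote this homotopy of bicontact structures to a homotopy of projectively Anosov flows. For each $s$, the intersection $L_s \coloneqq \eta^s_- \cap \eta^s_+$ is a nowhere-vanishing line field, since two transverse plane fields in a $3$-manifold meet along a line; moreover $L_s$ inherits a canonical orientation from the coorientations of $\eta^s_\pm$ and the orientation of $M$ (the flow orientation of Figure~\ref{fig:bicontact}), and this orientation varies continuously with $s$. I would then choose a smooth family of nonsingular vector fields $(Y_s)_{s \in [0,1]}$ with $Y_s$ a positive generator of $L_s$, subject to the endpoint conditions $Y_0 = X_0$ and $Y_1 = \widetilde{h}^*X_1$. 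Such a family exists because the positive generators of an oriented line field form an open convex ray in each fiber, so the bundle of admissible choices over $[0,1] \times M$ has contractible fibers and admits sections with prescribed values at $s = 0, 1$ (matching the given generators, which are positive by the supporting convention).

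Finally, Theorem~\ref{thm:projanosov} guarantees that each $Y_s$ generates a projectively Anosov flow $\Psi_s$, since $Y_s \in \eta^s_- \cap \eta^s_+$ with $(\eta^s_-, \eta^s_+)$ bicontact. The family $(\Psi_s)_{s \in [0,1]}$ is then a homotopy through projectively Anosov flows from $\Psi_0 = \Phi_0$ to $\Psi_1 = \widetilde{h}^*\Phi_1$, the latter because $\widetilde{h}^*X_1$ generates the pullback flow $\widetilde{h}^{-1} \circ \phi^t_1 \circ \widetilde{h}$. This is exactly the asserted homotopy.

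I do not expect a genuine obstacle here: the substantive content---the existence of $\widetilde{h}$ and of a homotopy through \emph{bicontact} structures---is precisely the preceding theorem, which we take as given. The only point requiring care is the lift from the line-field homotopy to a vector-field homotopy: one must verify that positivity (hence nonsingularity) can be maintained throughout while pinning down the two prescribed endpoints, which is the convexity argument above. A secondary check is that the flow orientation on $L_s$ induced by the bicontact structure is the one compatible with the given orientations of $\Phi_0$ and $\Phi_1$, so that the endpoint generators are indeed positive; this follows from the orientation conventions recorded before Lemma~\ref{lem:semianosov} and in Figure~\ref{fig:bicontact}.
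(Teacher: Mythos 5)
Your proposal is correct and follows exactly the route the paper intends: the corollary is stated as an immediate consequence of the preceding theorem from~\cite{BM} combined with the Mitsumatsu--Eliashberg--Thurston correspondence of Theorem~\ref{thm:projanosov}, and your convexity argument for choosing a family of positive generators of $\eta^s_- \cap \eta^s_+$ with prescribed endpoints is the standard (and correct) way to fill in the only detail the paper leaves implicit.
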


In that case, we will say that $\Phi_0$ and $\Phi_1$ are \textbf{projectively deformation equivalent}. A converse to this result was conjectured by Hozoori~\cite[Conjecture 7.3]{Hoz24}:

\begin{conj}
    If two (oriented) Anosov flows on $M$ are projectively deformation equivalent, then they are orbit equivalent.
\end{conj}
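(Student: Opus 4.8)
The plan is to first peel off the diffeomorphism, then reduce to a purely dynamical statement, and finally attack it by promoting the deformation to genuine Anosov flows (with a foliation-rigidity fallback). By definition of projective deformation equivalence there is a diffeomorphism $g\colon M\to M$ and a path $\{\Phi_t\}_{t\in[0,1]}$ of projectively Anosov flows joining $\Phi_0$ to $g^*\Phi_1$. Since conjugation by $g$ is itself an orbit equivalence, $g^*\Phi_1$ is orbit equivalent to $\Phi_1$, so it suffices to show that $\Phi_0$ and $g^*\Phi_1$ are orbit equivalent. Replacing $\Phi_1$ by $g^*\Phi_1$, I may thus assume that the two Anosov flows $\Phi_0,\Phi_1$ are joined by a path $\{\Phi_t\}$ of projectively Anosov flows. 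The key reduction is that \emph{if the path could be taken to consist of Anosov flows, we would be done immediately}: by the structural stability theorem the orbit-equivalence class of an Anosov flow is open in the $\mathcal C^1$ topology, hence locally constant along any continuous path of Anosov flows, and since $[0,1]$ is connected $\Phi_0$ and $\Phi_1$ would be orbit equivalent.

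The heart of the matter is therefore to promote the projectively Anosov path to an Anosov one, relative to its already-Anosov endpoints. Choosing continuous defining pairs $(\alpha_s^t,\alpha_u^t)$ along the path—possible because the weak bundles vary continuously, via the branching-foliation machinery of~\cite{Mas24, BI08}—one obtains continuous expansion rates $r_s^t<r_u^t$ with $r_s^0,r_s^1<0<r_u^0,r_u^1$ by Proposition~\ref{prop:defpair}. The obstruction is exactly the signs of $r_s^t,r_u^t$ in the interior of the path, the Anosov condition being $r_s<0<r_u$. A first naive attempt is to reparametrize each $\Phi_t$, replacing its generator $X_t$ by $f_tX_t$ with $f_t>0$; but a direct computation using $\alpha_s(X)=0$ gives $\mathcal L_{fX}\alpha_s=f\,\iota_X d\alpha_s=f\,r_s\,\alpha_s$, so reparametrization merely rescales $r_s,r_u$ by a positive function and can never flip their signs. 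One is thus forced to deform the flow direction itself while keeping $r_s<0<r_u$; equivalently, to deform the supporting bicontact structures $(\xi_-^t,\xi_+^t)$ of Theorem~\ref{thm:projanosov} so that the Anosov characterization of Theorem~\ref{thm:reebanosov} (existence of a dynamically negative Reeb field for $\xi_+^t$) persists along the whole family. This is an open but non-closed condition, and the difficulty is to maintain it after a preliminary perturbation of the path.

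An alternative and perhaps more robust route bypasses the dynamics of the path and instead extracts a discrete orbit-equivalence invariant that is visibly preserved under projective deformation. By~\cite{BFM23}, a transitive Anosov flow on an atoroidal $M$ is determined up to orbit equivalence by the free homotopy classes of its closed orbits, equivalently by its $\pi_1(M)$-equivariant bifoliated orbit space. The only data entering this invariant are the two weak foliations $\mathcal F^{ws},\mathcal F^{wu}$, which along a projective deformation persist as continuously varying taut (branching) foliations. The plan here is to prove that the associated equivariant bifoliated plane, being a discrete combinatorial object carrying a $\pi_1(M)$-action, cannot change along a continuous family, so that $\Phi_0$ and $\Phi_1$ have conjugate orbit spaces and are therefore orbit equivalent.

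The main obstacle, in either approach, is the complete absence of structural stability for projectively Anosov flows: a projective deformation carries almost no dynamical information, so the entire difficulty is to inject rigidity. In the first approach this means showing that the semi-Anosov conditions $r_u>0$ and $r_s<0$ can be simultaneously restored along the whole path without leaving the space of projectively Anosov flows—there is no reason this is automatic, and it may fail for badly chosen paths, so one must first argue that the path can be deformed into a favorable position. In the second approach the obstacle is twofold: the branching foliations of a projectively Anosov flow are not canonical and their orbit space need not be as well-behaved as in the Anosov case, and the Fenley–Barthelm\'e–Mann input requires transitivity and atoroidality, so the general statement would demand an extension of their classification. A complete proof will most plausibly combine a foliation-rigidity statement—that the equivariant bifoliated plane is locally constant along projective deformations—with the symplectic characterizations of Section~\ref{sec:Mit} to handle the non-transitive and non-atoroidal cases.
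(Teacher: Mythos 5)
This statement is not proved in the paper: it is an open conjecture, attributed to Hozoori (\cite[Conjecture 7.3]{Hoz24}), and the survey offers no argument for it. Your text is accordingly not a proof but a survey of two possible strategies, and by your own admission both have essential gaps. In the first approach, the entire content of the conjecture is concentrated in the step you flag yourself: promoting a path of projectively Anosov flows, rel its Anosov endpoints, to a path of Anosov flows. You correctly observe that reparametrization only rescales the expansion rates by a positive function and cannot flip their signs, and that projectively Anosov flows enjoy no structural stability; but you then offer no mechanism for ``injecting rigidity,'' so the reduction to structural stability along an Anosov path is never achieved. Note also that the conjecture is stated for projective deformation equivalence of the \emph{bicontact structures}, i.e.\ a homotopy through bicontact structures; the intermediate plane-field pairs need not even single out a preferred parametrized flow beyond the line field $\xi_-\cap\xi_+$, so there is no canonical continuous family of defining pairs $(\alpha_s^t,\alpha_u^t)$ to begin with without further argument.

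The second approach has the same status. The claim that the $\pi_1(M)$-equivariant bifoliated plane is locally constant along a projective deformation is precisely the kind of rigidity statement the conjecture is asking for, and you give no argument for it; for a merely projectively Anosov flow the weak ``foliations'' are only branching foliations, are not canonical, and there is no orbit space theory comparable to the Anosov one, so the invariant you propose to track is not even well defined along the interior of the path. The appeal to Barthelm\'{e}--Fenley--Mann further requires transitivity and (a form of) atoroidality, which you acknowledge but do not circumvent. In short, both routes reduce the conjecture to unproved rigidity statements that are essentially equivalent to it. Your discussion is a reasonable map of the difficulties, but it should be presented as such, not as a proof; as far as this paper is concerned the statement remains open.
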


If true, this would provide a purely contact geometric characterization of orbit equivalence between Anosov flows, as a deformation equivalence between their associated bicontact structures. We might further ask:

\begin{question} \label{quest:orbit4}
    Let $\Phi_0$ and $\Phi_1$ be two (oriented) Anosov flows supporting bicontact structures $(\xi^0_-, \xi^0_+)$ and $(\xi^1_-, \xi^1_+)$, respectively. If $\xi^0_-$ is contact homotopic to $\xi^1_-$ and $\xi^0_+$ is contact homotopic to $\xi^1_+$, are $\Phi_0$ and $\Phi_1$ orbit equivalent?
\end{question}

            \subsection{Anosov Liouville pairs}

We saw in Theorem~\ref{thm:2ALpairs} that Anosov flows can be characterized in terms of the existence of two transverse Liouville pairs supporting the flow and its inverse. It is natural to ask whether one can find a single Liouville pairs adapted to both the flow and its inverse. It turns out that if $\Phi$ is an Anosov flow, then the standard construction from Section~\ref{sec:Mit} induces contact forms $\alpha_\pm$ such that both $(\alpha_-, \alpha_+)$ and $(-\alpha_-, \alpha_+)$ are (linear and exponential) Liouville pairs supporting $\Phi$ and its inverse, respectively. This motivates the following

\begin{defn}
    An \textbf{exponential (resp.~linear) Anosov Liouville pair} (or AL pair for short) is a pair of contact forms $(\alpha_-, \alpha_+)$ such that both $(\alpha_-, \alpha_+)$ and $(-\alpha_-, \alpha_+)$ are exponential (resp.~linear) Liouville pairs.
\end{defn}

Notice that this definition is purely contact and symplectic geometric and does not make any reference to Anosov flows. It turns out that an AL pair automatically induces contact structures which are transverse and which intersect along a projectively Anosov flow. Then by Theorem~\ref{thm:2ALpairs}, this flow is automatically Anosov. The latter is only defined up to positive time reparametrization. If $\mathcal{AL}$ denotes the space of exponential AL pairs on $M$, and $\mathcal{AF}$ denotes the space of Anosov flows up to positive time reparametrization (or equivalently, the space of unit Anosov vector fields for a fixed underlying metric), we get a well-defined map
$$\mathcal{I}:\mathcal{AL} \longrightarrow \mathcal{AF}$$
sending an AL pair $(\alpha_-, \alpha_+)$ to a flow generating $\ker \alpha_- \cap \ker \alpha_+$ with appropriate orientation (see Figure~\ref{fig:bicontact}). A similar map can be defined on the space of \emph{linear} AL pairs; however, we do not know if the following result holds for it.

\begin{thm}[\cite{Mas23}]
    The map $\mathcal{I}$ is a Serre fibration with contractible fibers, hence is a homotopy equivalence.
\end{thm}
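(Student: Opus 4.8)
The plan is to show that $\mathcal{I}\colon \mathcal{AL} \to \mathcal{AF}$ is a Serre fibration with contractible fibers, from which the homotopy equivalence follows immediately by the long exact sequence of a fibration (a Serre fibration whose fibers are weakly contractible, between reasonable spaces, is a weak homotopy equivalence; and since these are spaces of sections/forms which are infinite-dimensional manifolds of the homotopy type of CW complexes, weak equivalence upgrades to genuine homotopy equivalence). So the two genuine tasks are: (i) identify and prove contractibility of the fibers, and (ii) verify the homotopy lifting property.

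For the fibers, I would first observe that a point $\Phi \in \mathcal{AF}$ is (the positive-reparametrization class of) an Anosov flow, and that $\mathcal{I}^{-1}(\Phi)$ consists of all exponential AL pairs $(\alpha_-, \alpha_+)$ whose induced bicontact structure has $\ker\alpha_- \cap \ker\alpha_+ = \langle X\rangle$ with the correct orientation. The key point is that an AL pair is exactly the data of a pair giving simultaneously an AL structure for $\Phi$ and for $\Phi^{-1}$ (via $(-\alpha_-, \alpha_+)$), so the fiber is naturally identified with a space of AL structures supporting $\Phi$ together with the compatibility that $(-\alpha_-,\alpha_+)$ also be Liouville. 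I would therefore reduce contractibility of $\mathcal{I}^{-1}(\Phi)$ to Theorem~\ref{thm:ALcontr}: the space of exponential AL structures supporting a fixed Anosov flow is contractible. Concretely, the plan is to exhibit an explicit deformation retraction of the fiber onto the standard Mitsumatsu pair (built from a fixed defining pair and smoothing), using that the defining $1$-forms $\alpha_s, \alpha_u$ for a fixed flow form a convex (hence contractible) space up to the positive-scaling and rate-inequality constraints, and that the convex combinations of two AL pairs remain AL pairs because the defining conditions ($\alpha_\pm(X)=0$, $\overline\alpha_s\wedge\overline\alpha_u>0$, $r_s<0<r_u$) are preserved under convex interpolation of the underlying $(\alpha_s,\alpha_u)$. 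I would need to check that interpolating two AL pairs keeps \emph{both} the Liouville condition for $(\alpha_-,\alpha_+)$ and the one for $(-\alpha_-,\alpha_+)$; this is where the sign-symmetric ``AL pair'' condition does real work, and I expect it to follow from the same computation that underlies the standard construction, now run with $r_s<0<r_u$ so that both Liouville inequalities hold throughout the path.

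For the homotopy lifting property, I would use structural stability and the contractibility of fibers in tandem. Given a homotopy $F\colon K\times[0,1]\to\mathcal{AF}$ and a lift $\widetilde{F}_0\colon K\times\{0\}\to\mathcal{AL}$, the task is to lift $F$ to $\widetilde{F}\colon K\times[0,1]\to\mathcal{AL}$ agreeing with $\widetilde F_0$ at time $0$. The strategy is to first produce \emph{some} continuous family of AL pairs lying over $F$ by applying the standard Mitsumatsu construction fiberwise and parametrically—here I would invoke that the defining pairs, smoothings, and resulting contact forms can be chosen to depend continuously on the flow (this is exactly the parametric content of the correspondence refined in~\cite{Mas23}, and the openness of the contact/Liouville conditions makes the smoothings locally uniform over the compact parameter space $K\times[0,1]$). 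This gives a lift $\widetilde G$ of $F$ with an arbitrary initial value; to correct it to match $\widetilde F_0$ at $t=0$, I would use the fiberwise contractibility to interpolate within the fiber over $F|_{t=0}$ between $\widetilde G_0$ and $\widetilde F_0$, and then reparametrize/concatenate (a standard trick: contractibility of fibers plus the ability to lift over the base yields the HLP for maps out of CW pairs). The role of structural stability is to guarantee that a small perturbation of an Anosov flow stays Anosov and orbit-equivalent, so that the lift can be built locally on the base and then patched using a partition of unity on the compact parameter domain, with convexity of the AL-pair conditions ensuring the patched forms remain AL pairs.

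The main obstacle I anticipate is not the formal fibration argument but the regularity and smoothing bookkeeping: the canonical $1$-forms $\alpha_s,\alpha_u$ are only continuous and $\mathcal{C}^1$ along $X$, so every step—interpolation within a fiber, parametric smoothing to genuine contact forms, and verification that the Liouville determinant $\iota_X\iota_{\partial_t}(d\lambda\wedge d\lambda)$ stays nonvanishing—must be carried out at this low regularity and made to depend continuously on the parameters. The delicate point is arranging the smoothings $\widetilde\alpha_\pm$ to vary continuously over $K\times[0,1]$ while preserving \emph{both} Liouville inequalities simultaneously and without disturbing the prescribed boundary value $\widetilde F_0$; I expect this to be handled exactly as in~\cite{Hoz24, Mas23}, where the contact and Liouville conditions are open and hence survive uniformly small, continuously-varying perturbations on a compact parameter space. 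Once that analytic input is in place, the topological conclusion—Serre fibration with contractible fibers, hence homotopy equivalence—is formal.
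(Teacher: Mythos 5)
The survey states this theorem without proof, citing~\cite{Mas23}, so I am comparing your sketch against the argument there (and against the closely analogous proofs of Lemmas~\ref{lem:gsurjpin} and~\ref{lem:gcontract*} in Section~\ref{sec:gAL}, which are the survey's own template for this kind of statement). Your overall architecture --- contractible fibers plus the homotopy lifting property, then Whitehead --- is the right one. But there is a genuine gap at the central step: you assert that the fiber $\mathcal{I}^{-1}(\Phi)$ is contractible because ``convex combinations of two AL pairs remain AL pairs.'' This is false as stated. The contact condition $\pm\,\alpha_\pm\wedge d\alpha_\pm>0$ and the Liouville condition $d\lambda\wedge d\lambda>0$ are \emph{quadratic} in the pair $(\alpha_-,\alpha_+)$, and a general exponential AL pair supporting $\Phi$ is \emph{not} of the normalized form $\alpha_\pm=\alpha_u\mp\alpha_s$ for a defining pair $(\alpha_s,\alpha_u)$; your appeal to ``the same computation that underlies the standard construction, run with $r_s<0<r_u$'' only covers pairs in the image of that construction, which is a proper (convex-parametrized) subspace of the fiber. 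The actual content of the theorem in~\cite{Mas23} is precisely to handle arbitrary AL pairs, and the deformations used there are of the ``large perturbation'' type --- compare the proof of Lemma~\ref{lem:gcontract*} in this survey, where two structures are joined by first adding $\kappa\,\alpha_u$ for $\kappa\gg 0$ so that the subsequent linear interpolation becomes positive; naive straight-line interpolation between the two given structures does not work.

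The same issue propagates into your lifting argument: patching local lifts with a partition of unity on the base again requires the fiberwise conditions to be preserved under pointwise convex combination, which they are not. Separately, structural stability is not the right input for constructing the parametric lift --- it produces orbit equivalences (mere homeomorphisms), whereas what is needed is the continuous dependence of the invariant bundles, expansion rates, and hence the defining pairs $(\alpha_s,\alpha_u)$ on the generating vector field (persistence and continuity of dominated splittings), which is what makes the Mitsumatsu construction genuinely parametric in~\cite{Mas23}. With those two repairs --- a non-convex deformation retraction of the fiber onto the standard pairs, and a parametric construction of defining pairs in place of partition-of-unity patching --- your outline would match the intended proof; as written, the key contractibility and patching steps do not go through.
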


It would be interesting to use this result to study the homotopy type of the space of Anosov flows on a given manifold. It is known that this space may have infinitely many connected component and nontrivial fundamental group, see~\cite{M13}. Further properties of (linear and exponential) AL pairs are studied in~\cite{Mas23}.

        \subsection{Volume preserving Anosov flows}

We say that a flow $\Phi$ on $M$ generated by a smooth vector field $X$ is \textbf{volume preserving} if there exists a smooth volume form $\mathrm{dvol}$ satisfying
$$\mathcal{L}_X \mathrm{dvol} = 0.$$
In other words, $X$ is divergence-free with respect to $\mathrm{dvol}$. Hozoori showed in~\cite{H23} (see also~\cite[Lemma 3.4]{Mas23}) that volume preserving Anosov flows can be characterized in terms of their expansion rates $r_s$ and $r_u$:

\begin{thm}[\cite{H23}]
    A projectively Anosov flow $\Phi$ is volume preserving Anosov if and only if it admits a defining pair $(\alpha_s, \alpha_u)$ with expansion rates $(r_s, r_u)$ satisfying $r_s + r_u = 0$. 
\end{thm}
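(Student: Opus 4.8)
The plan is to reduce everything to one computation identifying $r_s+r_u$ with the expansion rate of a natural transverse volume form, together with a regularity upgrade in one direction. Fix a smooth $1$-form $\beta$ with $\beta(X)=1$, and for a defining pair $(\alpha_s,\alpha_u)$ form the everywhere-nonvanishing $3$-form $\mu\coloneqq\beta\wedge\alpha_s\wedge\alpha_u$ (continuous and continuously differentiable along $X$). The key identity is
\[
\mathcal{L}_X\mu=(r_s+r_u)\,\mu.
\]
Indeed, $\mathcal{L}_X\alpha_s=r_s\alpha_s$ and $\mathcal{L}_X\alpha_u=r_u\alpha_u$ contribute $(r_s+r_u)\mu$, while the term $(\mathcal{L}_X\beta)\wedge\alpha_s\wedge\alpha_u$ vanishes: in the coframe $\{\beta,\alpha_s,\alpha_u\}$ this wedge only sees the $\beta$-coefficient of $\mathcal{L}_X\beta$, which is $(\mathcal{L}_X\beta)(X)=(\iota_Xd\beta)(X)=0$. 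I also record the transformation rule under rescaling: replacing $\alpha_s$ by $h\alpha_s$ with $h>0$ changes $r_s$ to $r_s+X(\ln h)$, and likewise for $\alpha_u$; thus $r_s+r_u$ is only well defined modulo functions of the form $Xp$.

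For the forward implication, assume $\Phi$ is volume preserving Anosov with smooth invariant volume form $\Omega$. Choose any defining pair (which exists since $\Phi$ is Anosov) and write $\mu=\phi\,\Omega$ for a positive function $\phi$. Since $\mathcal{L}_X\Omega=0$, the identity gives $r_s+r_u=X(\ln\phi)$. Rescaling $\alpha_s\mapsto\phi^{-1}\alpha_s$ produces a defining pair with $r_s'+r_u'=(r_s+r_u)-X(\ln\phi)=0$; as $r_u'=r_u>0$ is unchanged one still has $r_s'<0<r_u'$, so this is a genuine (and, since $\phi$ is as regular as the forms, $\mathcal{C}^1$) defining pair. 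No analytic subtlety arises in this direction.

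For the converse, assume a defining pair with $r_s+r_u=0$ is given. Because every defining pair satisfies $r_s<r_u$, the relation $r_s=-r_u$ forces $r_u>0$ and $r_s<0$, so Proposition~\ref{prop:defpair}(3) shows that $\Phi$ is Anosov. The identity now reads $\mathcal{L}_X\mu=0$, so $\mu$ is a continuous, everywhere-positive, flow-invariant volume form. The main obstacle is that being volume preserving requires a \emph{smooth} invariant volume form, whereas $\mu$ is only continuous. To upgrade regularity, fix a smooth background volume $\Omega_0$, write $\mu=\rho\,\Omega_0$, and note that invariance of $\mu$ is the cohomological equation $X(\ln\rho)=-\mathrm{div}_{\Omega_0}(X)$, with smooth right-hand side and continuous solution $\ln\rho$. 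The measure $\mu$ has full support, so every point is nonwandering; by the spectral decomposition of Anosov flows this forces transitivity, and Livsic's regularity theorem then promotes the continuous solution $\ln\rho$ to a smooth function. Hence $\rho$, and therefore $\mu$, is smooth, exhibiting $\Phi$ as volume preserving. I expect this regularity step—and the transitivity it relies on—to be the only genuinely delicate point; everything else is the transformation rule for $r_s+r_u$ and the sign bookkeeping.
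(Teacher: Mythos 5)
Your argument is correct, and although the survey only quotes this theorem from Hozoori without proof, the two ingredients you isolate --- the identity $\mathcal{L}_X(\beta\wedge\alpha_s\wedge\alpha_u)=(r_s+r_u)\,\beta\wedge\alpha_s\wedge\alpha_u$ identifying $r_s+r_u$ with the divergence of $X$ for the transverse volume $\alpha_s\wedge\alpha_u\wedge\beta$, and the Livsic/de la Llave--Marco--Mori\'{o}n regularity upgrade from a continuous invariant volume to a smooth one --- are exactly the ones the paper itself deploys in its proof of Theorem~\ref{thm:volumepres}. The transitivity point you rightly flag as the delicate step is justified for the reason you give (a continuous invariant volume of full support makes every point nonwandering, and Smale's spectral decomposition then yields transitivity), so the application of the regularity theorem is legitimate.
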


Alternatively, volume preserving Anosov flows can be characterized geometrically in terms of the Reeb vector fields of supporting bicontact structures:

\begin{thm}[\cite{H23}]
    A projectively Anosov flow $\Phi$ is volume preserving Anosov if and only if it admits a supporting ($\mathcal{C}^1$) bicontact structure $(\xi_-, \xi_+)$ with Reeb vector fields $R_-$ and $R_+$ satisfying $R_\pm \in \xi_\mp$.
\end{thm}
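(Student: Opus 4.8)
The plan is to show that, for the standard construction $\alpha_- = \alpha_u + \alpha_s$ and $\alpha_+ = \alpha_u - \alpha_s$ attached to a defining pair $(\alpha_s,\alpha_u)$, the Reeb transversality $R_\pm \in \xi_\mp$ is equivalent to the balanced-rate condition $r_s + r_u \equiv 0$, and then to match this with volume preservation in both directions. The key is a direct computation: working in a frame $(X, e_s, e_u)$ adapted to the splitting, with dual coframe $(\eta, \alpha_s, \alpha_u)$ normalised by $\eta(X) = 1$, the relations $\mathcal{L}_X\alpha_s = r_s\alpha_s$ and $\mathcal{L}_X\alpha_u = r_u\alpha_u$ give $\iota_X d\alpha_s = r_s\alpha_s$ and $\iota_X d\alpha_u = r_u\alpha_u$, and hence pin down $d\alpha_\pm$ up to the $\alpha_s\wedge\alpha_u$-coefficient. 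Solving the Reeb equations $\alpha_+(R_+) = 1$, $\iota_{R_+}d\alpha_+ = 0$ (and symmetrically for $R_-$) then yields $\alpha_-(R_+) = \alpha_+(R_-) = (r_s + r_u)/(r_s - r_u)$. Since $r_s < r_u$, these vanish exactly when $r_s + r_u \equiv 0$, which is the analytic heart of the statement.

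For the forward implication, suppose $\Phi$ is volume preserving Anosov, with preserved volume form $\mathrm{dvol}$. In dimension three the weak bundles $E^{ws}$ and $E^{wu}$ of a smooth Anosov flow are $\mathcal{C}^1$ (see~\cite{H94}), so I may choose $\mathcal{C}^1$ eigenforms $\alpha_s, \alpha_u$ with $\ker\alpha_s = E^{wu}$, $\ker\alpha_u = E^{ws}$ and rates $r_s < 0 < r_u$. Fixing any smooth $\eta$ with $\eta(X) = 1$, a short computation gives $\mathcal{L}_X(\eta\wedge\alpha_s\wedge\alpha_u) = (r_s + r_u)\,\eta\wedge\alpha_s\wedge\alpha_u$ (the $\mathcal{L}_X\eta$-contribution drops out, since $(\mathcal{L}_X\eta)(X) = 0$), so writing $\mathrm{dvol} = f\,\eta\wedge\alpha_s\wedge\alpha_u$ with $f > 0$ of class $\mathcal{C}^1$ forces $r_s + r_u = -X\ln f$. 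Rescaling $\alpha_s, \alpha_u$ by $\sqrt{f}$ leaves the difference $r_u - r_s$ unchanged but shifts the sum to $0$, producing a $\mathcal{C}^1$ defining pair with $r_s + r_u \equiv 0$ and $r_s < 0 < r_u$. The associated $\alpha_\pm$ are then $\mathcal{C}^1$ contact forms cutting out a supporting bicontact structure, and the computation of the first paragraph gives $R_\pm \in \xi_\mp$.

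For the converse, assume $\Phi$ is projectively Anosov and supported by a $\mathcal{C}^1$ bicontact structure whose Reeb fields satisfy $R_\pm \in \xi_\mp$, i.e.~$\alpha_\mp(R_\pm) = 0$. Then $(X, R_-, R_+)$ is a frame with dual coframe $(\gamma, \alpha_-, \alpha_+)$, and the Reeb conditions $\iota_{R_\pm}d\alpha_\pm = 0$ force $d\alpha_- = B\,\gamma\wedge\alpha_+$ and $d\alpha_+ = A'\,\gamma\wedge\alpha_-$ for continuous functions $A', B$; consequently $\mathcal{L}_X\alpha_- = B\alpha_+$ and $\mathcal{L}_X\alpha_+ = A'\alpha_-$. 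From these I read off $\mathcal{L}_X(\alpha_-\wedge\alpha_+) = 0$ and, since $\mathcal{L}_X\gamma$ annihilates $X$ and hence lies in the span of $\alpha_\pm$, also $\mathcal{L}_X(\gamma\wedge\alpha_-\wedge\alpha_+) = 0$. The latter exhibits $\gamma\wedge\alpha_-\wedge\alpha_+$ as a flow-invariant volume form, so $\Phi$ is volume preserving; the former shows that the transverse area form $\overline\alpha_-\wedge\overline\alpha_+$ on $N_X$ is invariant. Area invariance together with the domination of the projectively Anosov splitting upgrades domination to genuine hyperbolicity, by a standard two-dimensional argument: on compact $M$ the angle between $\overline E^s$ and $\overline E^u$ is bounded below, so invariance of the area forces exponential contraction on $\overline E^s$ and expansion on $\overline E^u$. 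Hence both $\Phi$ and $\Phi^{-1}$ are semi-Anosov, and $\Phi$ is Anosov by Lemma~\ref{lem:semianosov}; combined with volume invariance, $\Phi$ is volume preserving Anosov.

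I expect the main obstacle to be regularity bookkeeping rather than the algebra. In the forward direction one must ensure the balancing $r_s + r_u \equiv 0$ is achieved with genuinely $\mathcal{C}^1$ forms, so that $\alpha_\pm$ are bona fide $\mathcal{C}^1$ contact forms and their Reeb fields are defined; this is exactly why the $\mathcal{C}^1$-regularity of the weak bundles in dimension three is essential, since without it one would be forced back to the approximation scheme of the standard construction, where the Reeb condition would only hold approximately. In the reverse direction the delicate point is the passage from a dominated, area-preserving normal cocycle to an honest hyperbolic splitting, where one must control angles uniformly using compactness. The remaining identities are routine once the frame $(\gamma, \alpha_-, \alpha_+)$ and the structure equations are in place.
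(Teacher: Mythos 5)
The paper itself contains no proof of this statement: it is quoted verbatim from Hozoori~\cite{H23}, with the companion characterization via $r_s+r_u\equiv 0$ stated just before it, so there is no internal argument to compare against. Your reconstruction is essentially correct and follows the natural route through that companion theorem: the coframe computation giving $\alpha_\mp(R_\pm)=(r_s+r_u)/(r_s-r_u)$ is right (I verified it), as is the rescaling by $\sqrt{f}$ that shifts the sum of the rates to zero while preserving the difference, and the frame computation in the converse showing $d\alpha_-=B\,\gamma\wedge\alpha_+$, $d\alpha_+=A'\,\gamma\wedge\alpha_-$. Three points of bookkeeping deserve repair, none fatal. First, in the converse the coframe element $\gamma$ dual to $(X,R_-,R_+)$ is only continuous (the Reeb fields of $\mathcal{C}^1$ contact forms are merely $\mathcal{C}^0$), so $\mathcal{L}_X\gamma$ is not defined; replace $\gamma$ by any smooth $\eta$ with $\eta(X)=1$, which leaves the volume form unchanged since $\eta-\gamma$ lies in the span of $\alpha_\pm$, and run the Lie derivative argument with $\eta$. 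Second, your area-plus-bounded-angle argument produces exponential contraction and expansion on the \emph{normal} bundle $N_X$, which is not literally the semi-Anosov condition of the paper (that asks for strong bundles inside $TM$); the clean way to conclude within this survey's framework is to pass to an adapted metric on $N_X$ and invoke item (3) of Proposition~\ref{prop:defpair}, rather than Lemma~\ref{lem:semianosov}. Third, the invariant volume form you produce is only $\mathcal{C}^1$, whereas the paper's definition of volume preserving requires a smooth one; this is upgraded exactly as in the proof of Theorem~\ref{thm:volumepres}, via~\cite[Corollary 2.1]{LMM}. With these adjustments the argument is complete.
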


Compare with the second item of Theorem~\ref{thm:reebanosov}.

\begin{rem}
    We say that a flow $\Phi$ is \textbf{(topologically) transitive} if it has a dense orbit. It is well-known (for instance using \emph{Smale's spectral decomposition}) that volume preserving Anosov flows are transitive. Conversely, Asaoka~\cite{A08} showed that (codimension-one) transitive Anosov flows are orbit equivalent to volume preserving ones. The first example of nontransitive Anosov flow in dimension three was constructed by Franks--Williams~\cite{FW80}.
\end{rem}

In~\cite[Theorem 1.4]{Hoz24}, Hozoori shows that every bicontact structure supporting an Anosov flow is the kernel of a \emph{linear} Liouville pair, and the same proof holds for semi-Anosov flows:

\begin{thm}[\cite{Hoz24}]
    If $\Phi$ is a semi-Anosov flow on $M$, then every bicontact structure supporting $\Phi$ is the kernel of a linear Liouville pair.
\end{thm}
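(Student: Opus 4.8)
The plan is to reduce the statement to two pointwise positivity conditions on $M$, and then to arrange these by rescaling the defining contact forms, the semi-Anosov hypothesis entering exactly once to solve a cohomological inequality along the flow.

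First, fix smooth contact forms $\alpha_\pm$ with $\ker\alpha_\pm=\xi_\pm$ and the signs dictated by the bicontact structure; since $X\in\xi_-\cap\xi_+$ we may normalize them so that $\alpha_\pm(X)=0$. Set $\beta\coloneqq\alpha_+-\alpha_-$ and $\lambda\coloneqq(1-t)\alpha_-+(1+t)\alpha_+$ on $[-1,1]_t\times M$. Because $d\alpha_\pm$ are pulled back from the $3$-manifold $M$, every wedge of two of them vanishes, and a direct computation gives
\[ d\lambda\wedge d\lambda=2\,dt\wedge\big[(1-t)\,\beta\wedge d\alpha_-+(1+t)\,\beta\wedge d\alpha_+\big]. \]
The bracketed $3$-form is affine in $t$, so it is a positive volume form for all $t\in[-1,1]$ if and only if it is positive at the two endpoints, i.e. if and only if $\beta\wedge d\alpha_+>0$ and $\beta\wedge d\alpha_->0$. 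Since $\lambda|_{t=\pm1}=2\alpha_\pm$ are contact for $\xi_\pm$, the boundary conditions are automatic, and it suffices to choose the contact forms so that these two inequalities hold.

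Next I would reformulate the inequalities dynamically. As $\alpha_\pm(X)=0$, Cartan's formula gives $\mathcal{L}_X\alpha_\pm=\iota_X d\alpha_\pm$, a $1$-form annihilating $X$; since $\alpha_+,\alpha_-$ span the annihilator of $\langle X\rangle$ we may write $\mathcal{L}_X\alpha_+=p\,\alpha_++q\,\alpha_-$ and $\mathcal{L}_X\alpha_-=\tilde p\,\alpha_-+\tilde q\,\alpha_+$. Contracting $\alpha_+\wedge d\alpha_+$ with $X$ yields $\iota_X(\alpha_+\wedge d\alpha_+)=-q\,\alpha_+\wedge\alpha_-$; comparing with the contact condition $\alpha_+\wedge d\alpha_+>0$ and the orientation convention for a supporting bicontact structure forces $q>0$, and symmetrically $\tilde q>0$. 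The same contraction converts the two target inequalities into $p+q>0$ and $\tilde p+\tilde q>0$. For the defining pair of Section~\ref{sec:Mit} one computes $p+q=\tilde p+\tilde q=r_u$, which is positive precisely because $\Phi$ is semi-Anosov --- this is the shadow of the hypothesis that must survive for a general supporting bicontact structure.

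Finally I would exploit the rescaling freedom. Replacing $\alpha_\pm$ by $e^{\phi_\pm}\alpha_\pm$ preserves $\alpha_\pm(X)=0$ and transforms the data by $p\mapsto p+X(\phi_+)$, $\tilde p\mapsto\tilde p+X(\phi_-)$, $q\mapsto q\,e^{\phi_+-\phi_-}$, $\tilde q\mapsto\tilde q\,e^{\phi_--\phi_+}$, so the two conditions become the coupled system
\[ p+X(\phi_+)+q\,e^{\phi_+-\phi_-}>0,\qquad \tilde p+X(\phi_-)+\tilde q\,e^{\phi_--\phi_+}>0. \]
The two exponential terms have fixed product $q\tilde q>0$ and cannot be made simultaneously large, so the real task is to absorb $p$ and $\tilde p$ into the coboundary terms $X(\phi_\pm)$. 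Choosing the interpolation $\phi_+-\phi_-$ favorably and reducing to a single differential inequality $X(\phi)>k$ for an appropriate continuous $k$ built from $p,\tilde p,q,\tilde q$, I would invoke a Schwartzman-type solvability criterion: such $\phi$ exists if and only if $\int_M k\,d\mu<0$ for every $\Phi$-invariant Borel probability measure $\mu$. The main obstacle is verifying this measure-theoretic positivity, and it is exactly here that the semi-Anosov hypothesis is indispensable: using the dominated splitting to control the off-diagonal coefficients and the strict positivity of the unstable expansion ($r_u>0$, equivalently the tautness of $\mathcal{F}^{wu}$), one shows that the relevant averages against every invariant measure agree with $\int_M r_u\,d\mu>0$ up to coboundaries, yielding $\int_M k\,d\mu<0$. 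Solving for $\phi$, smoothing it while preserving the open inequalities (the bicontact structure being smooth, no further approximation of the forms is needed), and rescaling accordingly produces the desired linear Liouville pair.
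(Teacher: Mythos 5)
Your reduction is correct and is the right starting point: with $\lambda=(1-t)\alpha_-+(1+t)\alpha_+$ one indeed has $d\lambda\wedge d\lambda=2\,dt\wedge(\alpha_+-\alpha_-)\wedge\bigl[(1-t)\,d\alpha_-+(1+t)\,d\alpha_+\bigr]$ because all $4$-forms pulled back from $M$ vanish, affineness in $t$ reduces everything to the two inequalities $(\alpha_+-\alpha_-)\wedge d\alpha_\pm>0$, and your translation into $p+q>0$, $\tilde p+\tilde q>0$ together with the transformation law under rescaling is also correct. (The survey does not actually prove this statement --- it cites~\cite{Hoz24} --- but this is exactly the computation underlying the standard construction of Section~\ref{sec:Mit}, where $p+q=\tilde p+\tilde q=r_u$.)

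The gap is in your last paragraph: the entire difficulty is hidden in the phrase ``choosing the interpolation $\phi_+-\phi_-$ favorably,'' and the correct choice is a dynamical normalization that cannot be ``built from $p,\tilde p,q,\tilde q$'' pointwise. For a generic choice of $\phi_+-\phi_-$, your single inequality $X(\phi)>k$ has $k$ equal to a pointwise \emph{maximum} of two functions, each of which is $-r_u$ plus a coboundary; the integral of a maximum against an invariant measure can be strictly positive even when each term integrates negatively, so the Schwartzman criterion need not be verifiable and the argument does not close as written. (The decoupled sufficient conditions $p+X(\phi_+)\geq 0$ and $\tilde p+X(\phi_-)\geq 0$ also fail in general, since $\int p\,d\mu=\tfrac12\int(r_s+r_u)\,d\mu$ can be negative for non--volume-preserving flows.) The missing idea is the following: write $\alpha_\pm=a_\pm\alpha_s+b_\pm\alpha_u$ in terms of a defining pair with adapted metric, so that $r_u>0$ pointwise, and choose $\phi_+-\phi_-=\ln(b_-/b_+)$, i.e.\ normalize so that $\alpha_+$ and $\alpha_-$ have the \emph{same} $\alpha_u$-component $b\,\alpha_u$ --- equivalently $\ker(\alpha_+-\alpha_-)=E^{wu}$, exactly as in the standard construction where $\alpha_+-\alpha_-=-2\alpha_s$. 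A two-line computation with $\mathcal{L}_X\alpha_s=r_s\alpha_s$ and $\mathcal{L}_X\alpha_u=r_u\alpha_u$ then shows that $(\alpha_+-\alpha_-)\wedge d\alpha_+$ and $(\alpha_+-\alpha_-)\wedge d\alpha_-$ become \emph{identical} positive multiples of $(Xb+b\,r_u)$ times the volume form, so the coupled system collapses to the single condition $r_u+X(\ln b)>0$, which the further common rescaling by $1/b$ reduces to $r_u>0$. No invariant measures or Livschitz-type positivity result is needed. What does remain, and what you dismiss, is the regularity issue: the rescaling functions are built from the invariant splitting and are therefore only continuous (and differentiable along $X$), so the smoothing step of~\cite{Hoz24, Mas23} is genuinely required to produce smooth contact forms satisfying the open inequalities.
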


We also showed a similar result for exponential Liouville pairs in the context of \emph{volume preserving} Anosov flows in~\cite[Section 3.4]{Mas23}:

\begin{prop}[\cite{Mas23}]
    If $\Phi$ is a volume preserving Anosov flow, then every bicontact structure supporting $\Phi$ is the kernel of an exponential (Anosov) Liouville pair.
\end{prop}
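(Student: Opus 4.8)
The plan is to reduce the existence of an exponential Anosov Liouville pair with prescribed kernels $(\xi_-,\xi_+)$ to a single pointwise inequality, and then to satisfy that inequality by rescaling the contact forms, using the volume preserving hypothesis in the guise $r_s+r_u=0$.

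First I would fix a $\mathcal{C}^1$ defining pair $(\alpha_s,\alpha_u)$ for $\Phi$ with $r_s+r_u=0$, which exists by the characterization of volume preserving Anosov flows recalled above. Given a supporting bicontact structure $(\xi_-,\xi_+)$, I choose smooth contact forms $\alpha_\pm$ with $\ker\alpha_\pm=\xi_\pm$; since $X\in\xi_\pm$ one automatically has $\alpha_\pm(X)=0$. Fixing a positive volume form $\Omega\coloneqq\alpha_0\wedge\alpha_s\wedge\alpha_u$ for any $1$-form $\alpha_0$ with $\alpha_0(X)=1$, I write $\alpha_-\wedge d\alpha_-=-a\,\Omega$, $\alpha_+\wedge d\alpha_+=b\,\Omega$ and $\alpha_+\wedge d\alpha_- -\alpha_-\wedge d\alpha_+=c\,\Omega$, so that $a,b>0$ by the contact conditions. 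A direct computation for $\lambda=e^{-s}\alpha_-+e^s\alpha_+$ gives $d\lambda\wedge d\lambda=2\,ds\wedge(c+e^{-2s}a+e^{2s}b)\,\Omega$, so $(\alpha_-,\alpha_+)$ is an exponential Liouville pair iff $c+e^{-2s}a+e^{2s}b>0$ for all $s$, i.e. (minimizing over $s$) iff $c>-2\sqrt{ab}$. Replacing $\alpha_-$ by $-\alpha_-$ leaves $a,b$ unchanged and sends $c\mapsto-c$, so the pair is an exponential AL pair precisely when $c^2<4ab$ pointwise on $M$.

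The heart of the argument is the identity $c=(r_s+r_u)P+X(P)$, where $P$ is defined by $\alpha_-\wedge\alpha_+=P\,\alpha_s\wedge\alpha_u$ (nowhere zero by transversality, positive with the standard orientation conventions). I would prove it by expanding $d\alpha_s,d\alpha_u$ via the structure equations $\iota_Xd\alpha_s=r_s\alpha_s$, $\iota_Xd\alpha_u=r_u\alpha_u$ (equivalent to $\mathcal{L}_X\alpha_s=r_s\alpha_s$, $\mathcal{L}_X\alpha_u=r_u\alpha_u$), writing $\alpha_\pm$ in the basis $\{\alpha_s,\alpha_u\}$, and extracting the $\Omega$-coefficient of $\alpha_+\wedge d\alpha_- -\alpha_-\wedge d\alpha_+$: the first-order terms reassemble into $X(P)$ by the Leibniz rule, while the zeroth-order terms give $(r_s+r_u)P$. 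This is where the hypothesis enters decisively: with $r_s+r_u=0$ the obstruction term vanishes and $c=X(P)$ becomes an exact derivative along the flow.

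Finally I would rescale. Replacing $(\alpha_-,\alpha_+)$ by $(\psi\alpha_-,\psi\alpha_+)$ for a positive function $\psi$ does not change the kernels and replaces $P$ by $\psi^2P$; choosing $\psi=|P|^{-1/2}$ normalizes $P$ to a constant, whence $c=X(\mathrm{const})=0$ and the inequality $c^2=0<4ab$ holds everywhere. The only subtlety is regularity: $P$ is merely $\mathcal{C}^1$, so $\psi\alpha_\pm$ are a priori only $\mathcal{C}^1$. Since $\{c^2<4ab\}$ is an open condition in the $\mathcal{C}^1$-topology and holds with room to spare (namely $c=0$) on the compact manifold $M$, I would conclude by $\mathcal{C}^1$-approximating $\psi$ by smooth positive functions using the smoothing methods of \cite{Hoz24, Mas23}; the resulting smooth forms still have kernels $\xi_\pm$ and define an exponential AL pair. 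The main obstacle is thus the identity $c=(r_s+r_u)P+X(P)$ together with the recognition that volume preservation is exactly what turns the cross term into a coboundary $X(P)$, killable by a rescaling; without it one faces the transport equation $X(\ln P)=-(r_s+r_u)$, whose solvability is genuinely obstructed by the averages of $r_s+r_u$ over the closed orbits of $\Phi$.
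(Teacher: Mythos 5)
Your argument is correct, and it is essentially the approach of the source the paper cites for this statement: reducing the exponential AL condition for $\lambda=e^{-s}\alpha_-+e^s\alpha_+$ to the pointwise discriminant inequality $c^2<4ab$, identifying $c=(r_s+r_u)P+X(P)$ (which checks out by expanding $\alpha_\pm$ in the coframe $\{\alpha_s,\alpha_u\}$ and contracting with $X$), and killing $c$ by the conformal rescaling $\psi=|P|^{-1/2}$ once $r_s+r_u=0$, with the $\mathcal{C}^1$-openness of the discriminant condition handling the smoothing. Your closing observation that without volume preservation one faces the cohomological equation $X(\ln P)=-(r_s+r_u)$, obstructed by periodic-orbit averages, is precisely the mechanism the paper runs in reverse (via Lopes--Thieullen) in the proof of Theorem~\ref{thm:volumepres}.
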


However, we showed in~\cite[Theorem 3.15]{Mas23} that the situation is more complicated for exponential Liouville pairs in general: the only Anosov flows such that every supporting bicontact structure is the kernel of an \emph{exponential Anosov Liouville pair} are the volume preserving one. We generalize this result further by considering exponential Liouville pairs instead of exponential \emph{Anosov} Liouville pairs:

\begin{thm} \label{thm:volumepres}
    Let $\Phi$ be an Anosov flow on $M$, and assume that every bicontact structure supporting $\Phi$ is the kernel of an exponential Liouville pair. Then $\Phi$ is volume preserving.
\end{thm}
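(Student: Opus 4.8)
The plan is to prove the contrapositive: assuming $\Phi$ is *not* volume preserving, I would exhibit a single bicontact structure supporting $\Phi$ that is *not* the kernel of any exponential Liouville pair. The strategy is to work with the expansion rates. By the characterization in Theorem~\cite{H23}, $\Phi$ being volume preserving Anosov is equivalent to the existence of a defining pair with $r_s + r_u = 0$; so the failure of volume preservation should be measured by the quantity $\int_\gamma (r_s + r_u)$ over closed orbits $\gamma$ (or, equivalently, the cohomological/averaged obstruction to making $r_s + r_u$ vanish). The first step is to recall that whereas $r_s, r_u$ individually depend on the choice of metric/defining pair, the combination $r_s + r_u$ is governed by the divergence of $X$ and its integral over closed orbits or invariant measures is a genuine invariant of the flow (up to the freedom in reparametrization). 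This is exactly the obstruction that $r_s + r_u = 0$ removes.

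Next I would make precise what the exponential Liouville condition demands. For a pair $(\alpha_-, \alpha_+)$ with $\alpha_\pm = \alpha_u \pm \alpha_s$ (after smoothing), $\lambda_\mathrm{exp} = e^{-s}\alpha_- + e^s\alpha_+$ is Liouville iff $\alpha_-$ is a negative contact form, $\alpha_+$ is positive, *and* $d\lambda_\mathrm{exp}$ is nondegenerate on all of $\R_s \times M$. Expanding $d\lambda_\mathrm{exp}\wedge d\lambda_\mathrm{exp}$ and extracting the coefficient via $\iota_X\iota_{\partial_s}$ as in the standard construction, I would identify the nondegeneracy condition as a pointwise inequality on $M$ involving $r_s$, $r_u$, and the Lie derivatives/smoothing data — schematically, a sign condition on an expression of the form $e^{2s}(\cdots) + (\cdots) + e^{-2s}(\cdots)$ that must hold for *every* $s \in \R$. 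This is strictly stronger than the linear Liouville condition (which only needs it on $[-1,1]$), and its persistence as $s \to \pm\infty$ is what forces constraints on the asymptotic behavior of the two quadratic-in-$e^{\pm s}$ terms. The key point, following \cite{Mas23}, is that for a given bicontact structure one can freely rescale $\alpha_\pm$ by positive functions (reparametrizing the defining $1$-forms), and the exponential Liouville condition survives this rescaling for *all* bicontact structures supporting $\Phi$ precisely when $r_s + r_u$ can be arranged to vanish.

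The main obstacle — and the heart of the argument — is to show that if $r_s + r_u \not\equiv 0$ in the invariant sense (i.e.\ $\Phi$ is not volume preserving), then one can *choose* a supporting bicontact structure, equivalently a choice of smoothing $\widetilde\alpha_\pm$ and rescaling, for which the asymptotic ($s\to\pm\infty$) nondegeneracy of $d\lambda_\mathrm{exp}$ fails somewhere on $M$. Concretely, I would track the two leading coefficients (the $e^{2s}$ and $e^{-2s}$ terms of $d\lambda_\mathrm{exp}\wedge d\lambda_\mathrm{exp}$) and show that their product controls a sign that, when $r_s + r_u$ has a definite sign along some orbit or region, cannot be kept positive simultaneously at both ends by any allowed rescaling. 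This is where I expect the real work to lie: the linear Liouville pairs always exist (Theorem~\cite{Hoz24}), so the obstruction is genuinely an *exponential* phenomenon, detecting the asymptotic failure that the linear interpolation never sees. I would mirror the computation of \cite[Theorem 3.15]{Mas23} but, rather than imposing the AL-pair symmetry $(\alpha_-,\alpha_+) \leftrightarrow (-\alpha_-,\alpha_+)$, work directly with the nondegeneracy of $d\lambda_\mathrm{exp}$ for a *single* pair; the generalization amounts to observing that the same asymptotic sign obstruction already rules out the bare exponential Liouville condition, so the weaker hypothesis (exponential Liouville pair, not necessarily AL) still forces $r_s + r_u = 0$ and hence, by Theorem~\cite{H23}, volume preservation.
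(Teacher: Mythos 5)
Your general orientation is right---the hypothesis is exploited through the quantity $r_s+r_u$, and the exponential (as opposed to linear) Liouville condition is an asymptotic constraint as $s\to\pm\infty$ that forces an inequality on $r_s+r_u$ up to coboundary---but there is a genuine gap at the step you describe as ``the heart of the argument,'' and it is not where you locate it. The difficulty is not primarily in manufacturing a bad bicontact structure from a sign of $r_s+r_u$; it is in relating the failure of volume preservation to any sign condition on $r_s+r_u$ at all. The function $r_s+r_u$ depends on the choice of defining pair and can be altered by coboundaries $X\cdot h$; the only genuine invariants are its integrals over periodic orbits (and invariant measures). Your proposal implicitly assumes that ``not volume preserving'' yields a region or orbit where $r_s+r_u$ has a definite bad sign that no rescaling can remove, but establishing the needed implication is precisely the analytic content of the paper's argument, which runs in the direct (not contrapositive) direction: from the hypothesis one extracts, for every $\epsilon>0$, a smooth $h_\epsilon$ with $X\cdot h_\epsilon + r_s+r_u\geq -\epsilon$; this forces all periodic orbit integrals of $r_s+r_u$ to be nonnegative; the positive Liv\v{s}ic-type theorem of Lopes--Thieullen~\cite{LT05} then upgrades this to an exact continuous coboundary $h$ with $\delta\coloneqq X\cdot h+r_s+r_u\geq 0$; an integration-over-$M$ argument (Fatou) shows $\delta\equiv 0$, i.e.\ $e^h\,\alpha_s\wedge\alpha_u\wedge\theta$ is an invariant volume; and finally a regularity result~\cite{LMM} promotes this continuous invariant volume to a smooth one. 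None of these four ingredients appears in your proposal, and without them the passage between ``$r_s+r_u$ cannot be arranged to vanish'' and ``not volume preserving'' is unjustified.

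A secondary issue: even granting a periodic orbit $\gamma$ with $\int_\gamma(r_s+r_u)<0$, your contrapositive route still requires constructing an explicit supporting bicontact structure admitting no exponential Liouville pair, which is the content of~\cite[Proposition B.1]{Mas23}---an argument the paper deliberately replaces because it is the more delicate half. So the two computational halves you sketch (the $e^{\pm 2s}$ asymptotics of $d\lambda_{\mathrm{exp}}\wedge d\lambda_{\mathrm{exp}}$, and the rescaling freedom) are consistent with the cited computations from~\cite[Theorem 3.15]{Mas23}, but the proposal as written is a plan for a proof rather than a proof: the bridge from the cohomological condition on $r_s+r_u$ to an honest smooth invariant volume form is missing, and that bridge is the theorem's main new content.
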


\begin{proof}
    We follow the strategy of the proof of~\cite[Theorem~3.15]{Mas23}, but we will use different arguments than in~\cite[Proposition~B.1]{Mas23}. In fact, we will give a more direct proof using the main result of Lopes--Thieullen~\cite{LT05}.

    Computations as in the proof of~\cite[Theorem 3.15]{Mas23} show that in the present setting, for any defining pair $(\alpha_s, \alpha_u)$ for $\Phi$ with corresponding expansion rates $r_s$ and $r_u$ satisfying $r_s < 0 < r_u$, the following holds:

    \begin{center}
    \begin{flushleft}
        \textit{\textbf{Claim.} For every $\epsilon > 0$, there exists a smooth function $h_\epsilon : M \rightarrow \R$ such that 
        \begin{align*}
            X \cdot h_\epsilon + r_s + r_u \geq -\epsilon. \label{eq:rurs}
        \end{align*}}
    \end{flushleft}
    \end{center}
    In particular, this readily implies that for every periodic orbit $(\phi^t(x))_{0 \leq t \leq T}$ of $\Phi$ with period $T$,
    \begin{align*}
        \int_0^T (r_s + r_u)\circ \phi^t(x) \, dt \geq 0.
    \end{align*}
    We may further assume that $r_s$ and $r_u$ are $\mathcal{C}^1$ (see~\cite[Lemma 3.5]{Mas23}), so that the main result of~\cite{LT05} applies and there exists a (H\"{o}lder) continuous function $h : M \rightarrow \R$, continuously differentiable along $X$, satisfying
    \begin{align}
        \delta \coloneqq X \cdot h + r_s + r_u \geq 0.
    \end{align}
    It is easy to check that the quantity $r_s + r_u$ corresponds to the divergence of $X$ for the $\mathcal{C}^1$ volume form 
    $$\mathrm{dvol}\coloneqq \alpha_s \wedge \alpha_u \wedge \theta,$$
    where $\theta$ is any smooth $1$-form satisfying $\theta(X)\equiv 1$. We then consider the volume form $\mathrm{dvol}' \coloneqq e^h \mathrm{dvol}$ which satisfies 
    $$\mathcal{L}_X \mathrm{dvol}' = \delta \, \mathrm{dvol}',$$
    so that the divergence $\delta$ of $X$ for $\mathrm{dvol}'$ is nonnegative. In particular, $(\phi^t)^* \mathrm{dvol}'\geq \mathrm{dvol}'$ for every $t \geq 0$. This implies that $X$ \emph{preserves} $\mathrm{dvol}'$:
    \begin{align*}
        0 \leq \int_M \delta \, \mathrm{dvol}' &= \int_M \mathcal{L}_X \mathrm{dvol}' \\
        &= \int_M \lim_{t \rightarrow 0} \frac{1}{t}\big((\phi^t)^*\mathrm{dvol}' - \mathrm{dvol}'\big) \\
        &\leq \liminf_{t \rightarrow 0} \frac{1}{t} \left( \int_M (\phi^t)^*\mathrm{dvol}' - \int_M \mathrm{dvol}' \right) \\
        &= 0
    \end{align*}
    by Fatou's lemma, hence $\delta \equiv 0$ since $\delta \geq 0$ is continuous. By~\cite[Corollary 2.1]{LMM}, we conclude that $\mathrm{dvol}'$ is a \emph{smooth} invariant volume form for $\Phi$.
\end{proof}

Combining the two previous theorems, we readily get:

\begin{cor}
    If $\Phi$ is an Anosov flow which is \underline{not} volume preserving (e.g., $\Phi$ is not transitive), then there exists a linear Liouville pair supporting $\Phi$ which is not an exponential Liouville pair, and whose underlying bicontact structure is not the kernel of an exponential Liouville pair.
\end{cor}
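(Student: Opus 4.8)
The plan is to prove the corollary by contraposition of the key dichotomy just established. Theorem~\ref{thm:volumepres} tells us that if \emph{every} supporting bicontact structure is the kernel of an exponential Liouville pair, then $\Phi$ is volume preserving. Contrapositively, if $\Phi$ is \emph{not} volume preserving, then there must exist \emph{at least one} supporting bicontact structure $(\xi_-, \xi_+)$ which is \underline{not} the kernel of any exponential Liouville pair. This is exactly the bicontact structure we wish to exhibit; the task is to upgrade this abstract non-existence into a concrete \emph{linear} Liouville pair realizing that same bicontact structure, and then to verify it fails the exponential condition.

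First I would invoke the theorem of Hozoori cited just before the statement (the one asserting that for semi-Anosov flows, and hence for Anosov flows, \emph{every} supporting bicontact structure is the kernel of a \emph{linear} Liouville pair). Applying this to the bicontact structure $(\xi_-, \xi_+)$ produced in the previous paragraph yields a linear Liouville pair $(\alpha_-, \alpha_+)$ with $\ker \alpha_- = \xi_-$ and $\ker \alpha_+ = \xi_+$ supporting $\Phi$. Since $(\xi_-, \xi_+)$ was chosen precisely so as \emph{not} to be the kernel of any exponential Liouville pair, this linear Liouville pair cannot itself be an exponential Liouville pair---its underlying bicontact structure is by construction not the kernel of one. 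This simultaneously delivers both assertions of the corollary: the pair is linear but not exponential, and its bicontact structure is not the kernel of any exponential Liouville pair.

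I would phrase the argument carefully to keep the logical quantifiers straight: the statement we negate is the \emph{universal} hypothesis ``every bicontact structure supporting $\Phi$ is the kernel of an exponential Liouville pair,'' whose failure furnishes a single witness $(\xi_-, \xi_+)$. The only subtlety is confirming that a linear Liouville pair whose kernel bicontact structure is not realizable by an exponential pair is itself not an exponential pair---but this is immediate, since an exponential Liouville pair $(\alpha_-, \alpha_+)$ has kernel bicontact structure $(\ker \alpha_-, \ker \alpha_+)$, so if the pair were exponential its bicontact structure \emph{would} be the kernel of an exponential pair, a contradiction.

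The parenthetical remark that non-transitivity suffices follows from the discussion in the preceding Remark: non-transitive Anosov flows cannot be volume preserving, since volume preserving Anosov flows are transitive (via Smale's spectral decomposition). The main conceptual point, and the only place where real content enters, is the correct contrapositive reading of Theorem~\ref{thm:volumepres}; the rest is a bookkeeping combination of that theorem with Hozoori's linear realization result, so I do not expect any genuine obstacle beyond stating the quantifiers precisely.
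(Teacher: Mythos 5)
Your argument is correct and is precisely the combination the paper intends: the paper derives this corollary by "combining the two previous theorems," i.e., the contrapositive of Theorem~\ref{thm:volumepres} to produce a witness bicontact structure not realizable by an exponential Liouville pair, together with Hozoori's result realizing every supporting bicontact structure as the kernel of a linear Liouville pair. Your handling of the quantifiers and the final observation that such a linear pair cannot itself be exponential match the intended (unwritten) proof exactly.
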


\section{Liouville dynamics}

In this section, we describe some features of the dynamics of the Liouville vector fields of Anosov Liouville structures, with a particular focus on their \emph{skeleton}; see~\eqref{eq:skel}. The content of this section is mostly based on~\cite[Section 4.4]{Mas24} and~\cite{Hoz24-reg}. We refer to the latter article for an extensive study of these topics.

            \subsection{The skeleton}

In general, the skeleton of a Liouville structure can be extremely complicated. However, we showed that the skeleton of a Liouville structure defined from an exponential Liouville pair (not necessarily transverse) is always a codimension-$1$ submanifold of $\R \times M$ homeomorphic to $M$. A similar result holds for linear Liouville pairs. More precisely:

\begin{thm}
    Let $(\alpha_-, \alpha_+)$ be a Liouville pair on $M$ and $\lambda$ be the Liouville form defined by~\eqref{eq:expliouv}. Then there exists a continuous map $\sigma : M \rightarrow \R$ such that
    $$\mathfrak{skel}(\lambda) = \mathrm{graph}(\sigma) \coloneqq \{ (\sigma(p), p) \ \vert \ p \in M \}.$$
\end{thm}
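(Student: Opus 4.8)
The plan is to read off the skeleton directly from the dynamics of the Liouville vector field $Z$. First, note that in \eqref{eq:skel} the set $\bigcap_{t>0}\phi_Z^{-t}(K)$ consists of the points whose \emph{forward} $Z$-orbit stays in $K$, so $p$-point $x$ lies in $\mathfrak{skel}(\lambda)$ exactly when its forward $Z$-orbit is bounded. I would then compute $Z$. Since $\lambda=e^{-s}\alpha_-+e^s\alpha_+$ has no $ds$-component, writing $\beta=e^s\alpha_+-e^{-s}\alpha_-$ and $\Omega=e^{-s}d\alpha_-+e^s d\alpha_+$ gives $\omega=ds\wedge\beta+\Omega$ and $\iota_{\partial_s}\omega=\beta$. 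Contracting $\iota_Z\omega=\lambda$ with $\partial_s$ yields $\beta(Z)=0$, while $\lambda(Z)=\omega(Z,Z)=0$; together these force $\alpha_-(Z)=\alpha_+(Z)=0$, so the horizontal part of $Z$ lies in $\ker\alpha_-\cap\ker\alpha_+=\langle X\rangle$ and $Z=f\,\partial_s+c\,X$ for continuous functions $f,c$. Using $\tfrac12\iota_Z(\omega\wedge\omega)=\lambda\wedge\omega$ and $\omega\wedge\omega=2\,ds\wedge\beta\wedge\Omega$, the vertical speed $f=ds(Z)$ is extracted as the ratio of top-degree forms on $M$ defined by $\lambda\wedge\Omega=f\,(\beta\wedge\Omega)$. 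The denominator $\beta\wedge\Omega$ is the positive symplectic volume, and the numerator equals $e^{2s}\,\alpha_+\wedge d\alpha_+ + e^{-2s}\,\alpha_-\wedge d\alpha_- + \alpha_-\wedge d\alpha_+ + \alpha_+\wedge d\alpha_-$; since $\alpha_+\wedge d\alpha_+>0$ and $\alpha_-\wedge d\alpha_-<0$, its $s$-derivative $2e^{2s}\,\alpha_+\wedge d\alpha_+-2e^{-2s}\,\alpha_-\wedge d\alpha_-$ is a positive volume. Thus on every fiber $\{p\}\times\R$ the numerator is strictly increasing in $s$, running from $-\infty$ to $+\infty$, and $f\to\pm1$ as $s\to\pm\infty$.

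The decomposition $Z=f\,\partial_s+c\,X$ shows that over each $X$-orbit $\gamma\subset M$ the cylinder (or plane) $S_\gamma=\R_s\times\gamma$ is $Z$-invariant; in fact one checks $\lambda_{|S_\gamma}\equiv0$ and $\omega_{|S_\gamma}\equiv0$, so these are the Lagrangian cylinders attached to closed orbits in the discussion of $L_\gamma$. The problem therefore reduces to the family of $2$-dimensional flows $\dot s=f,\ \dot\tau=c$ on $S_\gamma$ (with $\tau$ the $X$-parameter), each with model behaviour $(\dot s,\dot\tau)\to(\pm1,0)$ as $s\to\pm\infty$: both ends are exits for the forward flow, so any forward-bounded orbit is trapped in a compact band $\{|s|\le S_0\}$.

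For existence of a skeleton point on each fiber, I would argue by connectedness: on $\{p\}\times\R$ the basins $U_+$ and $U_-$ of points whose forward orbit escapes to $s=+\infty$ and to $s=-\infty$ are open (by continuity of the flow together with the source structure at infinity), nonempty, and disjoint, so they cannot cover the connected fiber; the complement is the forward-bounded set $\Sigma(p)\subset\R$, and $\mathfrak{skel}(\lambda)=\bigcup_{p}\{p\}\times\Sigma(p)=M\times\R\setminus(U_+\cup U_-)$. Granting that each $\Sigma(p)$ is a single point $\sigma(p)$, continuity of $\sigma$ is then formal: its graph is the closed set complementary to the open basins and meets each fiber once, hence is the graph of a continuous function.

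The hard part will be uniqueness, i.e.\ showing $\Sigma(p)$ is a single point rather than an interval or Cantor set. The natural obstruction is that the candidate $\{f=0\}$ (the zero locus of the vertical speed, a genuine continuous graph by the monotonicity above) is \emph{not} $Z$-invariant, since $Z\cdot f=c\,X\cdot f\neq0$ there; the true skeleton is an invariant graph that must be produced dynamically. I would establish normal (vertical) repulsion of the forward-bounded set: a $\partial_s$-perturbation grows under the forward flow, so two distinct forward-bounded points on one fiber would be driven apart until one escapes, a contradiction. Making this quantitative requires controlling the integrated vertical expansion $\int \partial_s f$ along bounded orbits, and the subtlety is that $\partial_s f$ is not pointwise positive—only the numerator is monotone. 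I expect to resolve this using the $2$-dimensional structure on each $S_\gamma$, either through a Poincaré--Bendixson/monotonicity analysis of $\dot s=f,\ \dot\tau=c$ on the cylinder, or by identifying the forward-bounded set as a normally repelling invariant manifold whose fiberwise slices are consequently singletons.
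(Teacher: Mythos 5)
Your reduction of the problem is correct and matches the known setup: the identification of $\mathfrak{skel}(\lambda)$ from~\eqref{eq:skel} as the set of points with precompact forward $Z$-orbit, the computation $\alpha_\pm(Z)=0$, the formula $f\,\beta\wedge\Omega=\lambda\wedge\Omega$ for the vertical speed, the fiberwise monotonicity of the numerator with $f\to\pm1$ as $s\to\pm\infty$, and the connectedness argument showing that the skeleton meets every fiber $\{p\}\times\R$ are all sound. (For the record, this survey does not reprove the theorem; it defers to the references, so I am judging your argument on its own.) But the statement being proved is that the skeleton is a \emph{graph}, i.e.\ meets each fiber in \emph{exactly} one point, and the ``at most one'' half is precisely the part you leave open. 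As written, you have only shown that $\mathfrak{skel}(\lambda)$ is a compact set contained in a band $\{|s|\le S_0\}$ whose fiberwise slices are nonempty; that is strictly weaker than the theorem, so this is a genuine gap rather than a routine verification to be filled in.

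Moreover, the two strategies you propose for uniqueness face concrete obstructions. First, the theorem does not assume the pair is transverse, so $\ker\alpha_-\cap\ker\alpha_+$ need not be a line field and there is no decomposition $Z=f\,\partial_s+c\,X$ nor invariant surfaces $S_\gamma$; your entire reduction to planar dynamics is unavailable in that generality. Second, even in the transverse case, a Poincar\'e--Bendixson analysis on $S_\gamma$ only has teeth when $\gamma$ is a closed orbit (so that $S_\gamma$ is a cylinder with some recurrence); for a non-closed orbit $S_\gamma$ is a plane and the relevant compactness lives in $M$, not in $S_\gamma$, so nothing forces two forward-bounded points on one fiber to interact. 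Third, the ``normal repulsion'' route needs more than integrated positivity of $\partial_s f$: the variational equation for a vertical displacement couples to the base dynamics through $c\,X\cdot f$, exactly the term you identify as the obstruction, and you give no mechanism to control it. The missing ingredient --- visible in the characterization $\ker\{e^{-\sigma}\alpha_-+e^\sigma\alpha_+\}=E^{ws}$ stated right after the theorem --- is the invariant plane field coming from the dominated splitting: the plane $\ker(\lambda_{(s,p)}|_{T_pM})$ rotates monotonically from $\xi_-$ to $\xi_+$ as $s$ increases, and the unique parameter at which it hits the flow-invariant plane field produces an explicit $Z$-invariant graph, whose complement is then swept out by the two symplectization ends. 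That dynamical input (or a substitute for it in the non-transverse case) cannot be recovered from the two-dimensional phase-plane analysis alone, so the proof is incomplete at its central step.
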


If $(\alpha_-, \alpha_+)$ is a transverse Liouville pair supporting a semi-Anosov flow $\Phi$, then the map $\sigma$ is characterized by
$$\ker\left\{e^{-\sigma} \alpha_- + e^\sigma \alpha_+\right\} = E^{ws}.$$
Then, the regularity of $\sigma$ is related to the regularity of the weak-stable bundle of $\Phi$, which is not necessarily $\mathcal{C}^1$. However, it is $\mathcal{C}^1$ when $\Phi$ is Anosov by a result of Hasselblatt. We refer to~\cite{Hoz24-reg} for more results about the regularity of the skeleton.

To summarize, the skeleton of an AL structure is a $\mathcal{C}^1$ codimension-$1$ submanifold of $\R \times M$ which is graphical over $\{0\} \times M$. Moreover, the Liouville vector field is tangent to it and can be identified with (a reparametrization of) the underlying Anosov flow. In other words, the Liouville dynamics `remembers' the Anosov flow along the skeleton. It is also worth nothing that $\R \times M \setminus \mathfrak{skel}$ is the disjoint union of the symplectizations of $\alpha_-$ and $\alpha_+$; however, those symplectizations are `glued together' along their negative ends!

Perhaps a strategy to solve Question~\ref{quest:orbit2} would be to relate the two skeleta of the AL structures induced by the two Anosov flows $\Phi_0$ and $\Phi_1$. An intermediate step would be the following

\begin{question} \label{quest:orbit3}
    Let $\Phi$ be an Anosov flow with an induced AL structure $(\omega, \lambda)$ on $V \cong \R \times M$. Assume that there exists a smooth compactly supported function $f :V \rightarrow \R$ such that the following hold:
    \begin{itemize}
        \item The skeleton of $\lambda' \coloneqq \lambda + df$ is a codimension-$1$, $\mathcal{C}^1$ submanifold of $\R \times M$,
        \item The Liouville vector field $Z'$ of $\lambda'$ restricted to its skeleton generates an Anosov flow $\Phi'$.
    \end{itemize}
    Are $\Phi$ and $\Phi'$ orbit equivalent? Are they related in any way?
\end{question}

            \subsection{Towards a dynamical characterization}

We saw that the skeleton of an AL structure supporting an Anosov flow $\Phi$ is always a $\mathcal{C}^1$ codimension-$1$ submanifold of $V$ which is invariant under the Liouville flow. Hozoori further observed in~\cite{Hoz24-reg} that the skeleton is \textbf{$\mathcal{C}^1$-persistent}, in the sense that the skeleton of a $\mathcal{C}^1$-small perturbation of the Liouville vector field (not necessarily through Liouville vector fields!) is a $\mathcal{C}^1$ submanifold which is $\mathcal{C}^1$-close to the original skeleton. By deep theorems of Hirsh--Pugh--Shub and Ma\~{n}\'{e}, this is equivalent to the fact that the skeleton is \emph{normally hyperbolic} with respect to the Liouville flow. Interestingly, the corresponding statement does not hold for the skeleton of a transverse Liouville pair induced by a semi-Anosov but not Anosov flow. We refer to~\cite{Hoz24-reg} for a more complete discussion.

It is natural to conjecture that these properties of the skeleton are enough to \emph{characterize} AL manifolds:

\begin{conj} \label{conj:skelpersist}
    Let $(V, \omega, \lambda)$ be a $4$-dimensional finite type Liouville manifold satisfying the following assumptions.
    \begin{itemize}
        \item The skeleton $\mathfrak{skel}$ of $\lambda$ is a codimension-$1$ $\mathcal{C}^1$ submanifold,
        \item $\mathfrak{skel}$ is normally hyperbolic with respect to the Liouville flow,
        \item The Liouville flow induces an Anosov flow $\Phi$ on $\mathfrak{skel}$.
    \end{itemize}
    Then $(V, \omega, \lambda)$ is exact symplectomorphic to an AL structure supporting $\Phi$.
\end{conj}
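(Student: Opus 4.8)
The plan is to reconstruct a transverse exponential Liouville pair on the skeleton directly from the Liouville data, and then to invoke the uniqueness result behind Theorem~\ref{thm:ALcontr} to identify $(V, \omega, \lambda)$ with the resulting standard AL structure. First I would exploit the finite-type hypothesis: $V$ retracts onto the compact skeleton $\mathfrak{skel}$ under the backward Liouville flow, and the complement $V \setminus \mathfrak{skel}$ is exact symplectomorphic to a cylindrical end over the ideal contact boundary $\partial_\infty V$. I would identify $\mathfrak{skel}$ with the closed $3$-manifold $M$ and the restricted Liouville vector field $Z|_{\mathfrak{skel}}$ with a reparametrization of the Anosov generator $X$. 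Normal hyperbolicity provides a $\mathcal{C}^1$ normal line bundle on which $Z$ acts by expansion; using the orientability of the Anosov splitting I would show this line bundle is trivial, so that $\mathfrak{skel}$ is two-sided and separates $V$ into two escaping regions $V_-$ and $V_+$ for the forward Liouville flow.

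On each component $V_\pm$ the Liouville flow is complete and outward, so, combined with finite type, $V_\pm$ is a convex (positive) symplectization of a component $(M_\pm, \xi_\pm)$ of $\partial_\infty V$, and normal hyperbolicity forces the normal projection to identify each $M_\pm$ with $\mathfrak{skel} \cong M$. This produces contact forms $\alpha_-$ and $\alpha_+$ on $M$ of opposite signs. The crux of this step is to promote the abstract symplectization coordinates on the two sides to a single genuine collar coordinate across the skeleton: I would linearize the normal dynamics along $\mathfrak{skel}$, via a Hirsch--Pugh--Shub / Sternberg-type normal form adapted to the $\mathcal{C}^1$ setting, to obtain a tubular neighborhood $(-\epsilon, \epsilon)_s \times M$ with $\mathfrak{skel} = \{s = 0\}$, with $Z$ tangent to $\{s=0\}$, and with the forward flow strictly increasing $|s|$.

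In this collar I would show that, after an exact symplectomorphism supported near the skeleton, $\lambda$ takes the exponential form $e^{-s}\alpha_- + e^s \alpha_+$ of~\eqref{eq:expliouv}. The relation $\mathcal{L}_Z \omega = \omega$ together with the tangency of $Z$ to $\{s=0\}$ pins down the $s$-dependence of $\lambda$ to leading order, while the restriction $\lambda|_{\mathfrak{skel}}$ has kernel equal to the weak-stable plane field $E^{ws}$ of $\Phi$, since $\lambda(Z) \equiv 0$ and $Z|_{\mathfrak{skel}} = X$; this fixes the combination $e^{-\sigma}\alpha_- + e^\sigma \alpha_+$ and realizes $\mathfrak{skel}$ as a graph $\{s = \sigma(p)\}$, matching the expected skeleton of an AL structure. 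Nondegeneracy of $\omega = d\lambda$ near $\mathfrak{skel}$ then forces $\xi_- = \ker \alpha_-$ and $\xi_+ = \ker \alpha_+$ to be transverse with $\xi_- \cap \xi_+ = \langle X \rangle$, so that $(\alpha_-, \alpha_+)$ is a transverse exponential Liouville pair whose induced flow is $\Phi$ by Theorem~\ref{thm:2ALpairs}. Finally, this pair determines via the standard construction an AL structure $(\omega_0, \lambda_0)$ supporting $\Phi$ whose skeleton, ideal boundary, and germ along $\mathfrak{skel}$ all agree with those of $(\omega, \lambda)$; I would interpolate between $\lambda$ and $\lambda_0$ through Liouville forms agreeing near $\mathfrak{skel}$ and at infinity, and apply the Liouville stability underlying Theorem~\ref{thm:ALcontr} to extract the desired exact symplectomorphism.

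The hard part will be the normal form of the third paragraph: the skeleton is only $\mathcal{C}^1$ and the tangential Anosov dynamics has both expanding and contracting directions, so the standard smooth linearization theorems for normally hyperbolic invariant manifolds must be upgraded to a Liouville-adapted version that controls the primitive $\lambda$ itself, not merely the flow, and that survives the low regularity of $\mathfrak{skel}$ and of the Anosov bundles. A secondary obstacle is excluding the non-separating case, where the normal bundle is nonorientable and $\partial_\infty V$ is a single connected boundary behaving like a double cover of $M$; I expect the orientability conventions on the Anosov splitting to rule this out, but it must be checked carefully, as it is exactly the point distinguishing a genuine AL structure from a $\mathbb{Z}/2$-quotient thereof.
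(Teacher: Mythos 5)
This statement is a \emph{conjecture} in the paper, not a theorem: the author explicitly records that it is open, noting only that Hozoori's~\cite[Theorem 1.12]{Hoz24-reg} gives a partial result (a $\mathcal{C}^1$-\emph{strict} equivalence to an AL structure) and that ``this does not immediately imply the previous conjecture.'' So there is no proof in the paper to compare yours against, and your proposal should be judged as an attempt at an open problem.

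As such an attempt, your outline is a sensible strategy and broadly parallels the known partial result, but it contains genuine gaps at exactly the points the paper identifies as open. First, the normal-form step that you yourself flag as ``the hard part'' is the whole difficulty: producing a collar $(-\epsilon,\epsilon)_s\times M$ in which $\lambda$ literally takes the exponential form $e^{-s}\alpha_-+e^s\alpha_+$ requires a linearization that controls the primitive $\lambda$ (not just the flow $Z$) in the $\mathcal{C}^1$ regularity of the skeleton and of the Anosov bundles; no existing normally-hyperbolic linearization theorem does this, and asserting that it ``would'' follow is not a proof. Second, your final step --- interpolating between $\lambda$ and the standard AL form $\lambda_0$ and invoking the stability behind Theorem~\ref{thm:ALcontr} --- is not available: the contractibility results in the paper (Theorem~\ref{thm:ALcontr} and Theorem~\ref{thm:gcontract}) apply only to exponential AL structures, respectively to $g$AL structures satisfying the $u$-taming condition $\omega\wedge d\alpha_u>0$. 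An arbitrary Liouville manifold satisfying the hypotheses of the conjecture is not known to satisfy that condition, and the paper explicitly poses as a separate open question whether a general Liouville filling of a supporting bicontact structure is even homotopic to a $g$AL structure. Without establishing the $u$-taming condition (or some substitute), you have no contractible ambient space in which to run the interpolation, so the last paragraph of your argument does not close.
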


A first step is obtained in~\cite[Theorem 1.12]{Hoz24-reg}: under these assumptions, the Liouville structure $\lambda$ is \emph{$\mathcal{C}^1$-strictly equivalent} to an AL structure, in the sense that there exists a $\mathcal{C}^1$-diffeomorphism $\varphi$ such that $\varphi^*\lambda$ is an AL structure supporting $\Phi$. Unfortunately, this does not immediately imply the previous conjecture, but the methods of~\cite{Hoz24-reg} could perhaps be extended to yield such a result.

\section{AL structures revisited} \label{sec:gAL}

In this section, we define a larger space of Liouville structures associated with an Anosov or semi-Anosov flow. The goal is twofold: we want to make minimal assumptions on such structures while ensuring that the space at hand is contractible for `essentially standard' reasons. 

    \subsection{Generalized AL structures}

Let $\Phi$ be an (oriented) Anosov flow on $M$, or more generally a semi-Anosov flow.

\begin{defn} \label{def:gAL}
    A \textbf{generalized Anosov Liouville structure} (or \textbf{gAL structure} for short) supporting $\Phi$ is a Liouville structure $(\omega, \lambda)$ on $[-1,1] \times M$ satisfying the following conditions.
    \begin{enumerate}
        \item $(\omega,\lambda)$ is a Liouville filling of a bicontact structure $(\xi_-, \xi_+)$ supporting $\Phi$.
        \item There exists a $u$-defining $1$-form $\alpha_u$ as in Definition~\ref{def:defpair} such that
        \begin{align} \label{eq:alphau}
        \omega \wedge d\alpha_u > 0.
    \end{align}
    \end{enumerate}

    If moreover the Reeb vector fields $R_\pm$ induced by $\lambda$ on $\{\pm1\}\times M$ satisfy that $R_+$ (resp.~$R_-$) is positively (resp.~negatively) transverse to $E^{ws}$, we say that $(\omega, \lambda)$ is \textbf{Reeb-adapted}.
\end{defn}

The $1$-form $\alpha_u$ is not part of the data of this definition. Notice that the space of $u$-defining $1$-forms satisfying~\eqref{eq:alphau} is convex, hence contractible. Interestingly, this second condition only depends on the symplectic form $\omega$, \emph{not on the Liouville form $\lambda$}! We shall refer to this condition as the \textbf{$u$-taming condition}.

\begin{rem}
    The $u$-taming condition can be rephrased in terms of the vector field $X_u$ on $[-1,1] \times M$ defined by $$\iota_{X_u} \omega = \alpha_u.$$ An easy computation shows that~\eqref{eq:alphau} is equivalent to $$\mathcal{L}_{X_u} (\omega \wedge \omega )> 0,$$
which means that $X_u$ has \emph{positive divergence} for the volume form $\omega \wedge \omega$.

This condition can also be rephrased by considering the (oriented) kernel of $d\alpha_u$ on $TM$, which is spanned by a vector $Z_u$ which projects to a positive generator of $\overline{E}^{ws} \subset TM \slash \langle X \rangle$. Equation~\eqref{eq:alphau} is then equivalent to 
$$\omega(Z_u, \partial_t) > 0.$$
Notice that $\mathrm{span}\{Z_u, \partial_t\}$ is tangent to a $2$-dimensional foliation on $[-1, 1] \times M$, which is then symplectic for $\omega$.

Let us give a last interpretation of condition~\ref{eq:alphau}. A symplectic form $\omega$ on $[-1,1] \times M$ is equivalent to the data of a family $(\alpha_t)_{t \in [-1,1]}$ of $1$-forms on $M$ and a family $(\omega_t)_{t \in [-1,1]}$ of \emph{closed} $2$-forms on $M$ satisfying that for every $t \in [-1,1]$, 
\begin{itemize}
    \item $\alpha_t \wedge \omega_t > 0$,
    \item $d \alpha_t = \partial_t \omega_t$.
\end{itemize}
It is easy to see that there is a unique $2$-form $\omega$ on $[-1,1] \times M$ satisfying $\iota_{\partial_t} \omega = \alpha_t$ and $\omega_t = \omega_{\vert \{t\} \times M}$. The first item is equivalent to $\omega$ being nondegenerate, and the second item is equivalent to $\omega$ being closed. The first item also means that $\alpha_t$ evaluates positively on $\ker \omega_t$, and $\omega_t$ evaluates positively on $\ker \alpha_t$. From this viewpoint, condition~\ref{eq:alphau} becomes 
$$\alpha_t \wedge d\alpha_u > 0,$$
i.e., $d\alpha_u$ is positive on $\xi_t \coloneqq \ker \alpha_t = \ker \iota_{\partial_t} \omega$. In that regards, condition~\ref{eq:alphau} means that the plane fields $\xi_t$ remain in the cone defined by $\{d\alpha_u > 0\}$; informally, they don't `twist'.
\end{rem}

We denote by $g\mathcal{AL}_\Phi$ the space of gAL structures on $[-1,1] \times M$ supporting $\Phi$, and by $g\mathcal{AL}^*_\Phi \subset g\mathcal{AL}_\Phi$ the subspace of Reeb-adapted ones. Notice that Definition~\ref{def:gAL} still makes sense when $\lambda$ is only $\mathcal{C}^1$.

\begin{ex} \label{ex:interp}
    The second condition in Definition~\ref{def:gAL} is automatically satisfied for $1$-forms on $[-1,1] \times M$ obtained as suitable interpolations between two contact forms $\alpha_\pm$ whose kernels form a bicontact structure supporting $\Phi$. This includes linear and exponential Liouville pairs (or rather truncations thereof). More generally, let us consider two such contact forms $\alpha_\pm$, and two functions $\chi_\pm : [-1,1]\times M \rightarrow \R$ satisfying
    \begin{align*}
        \pm \partial_t \chi_\pm > 0.
    \end{align*}
    We define a $1$-form $\lambda = \lambda_{(\chi_\pm, \alpha_\pm)}$ on $[-1,1] \times M$ by
    \begin{align*}
        \lambda \coloneqq \chi_- \alpha_- + \chi_+ \alpha_+.
    \end{align*}
    Such interpolations are extensively studied by Hozoori in~\cite{Hoz24-reg}. An immediate computation shows
    $$d\lambda \wedge d\alpha_u = dt \wedge \left( \partial_t \chi_t \, \alpha_- \wedge d\alpha_u + \partial_t\chi_+ \,  \alpha_+ \wedge d\alpha_u \right) > 0,$$
    since $\pm \alpha_\pm \wedge d\alpha_u > 0$. Of course, the first condition in Definition~\ref{def:gAL} imposes further highly nontrivial constraints on $\chi_\pm$ and $\alpha_\pm$.
\end{ex}

\begin{rem}
    Every bicontact structure supporting $\Phi$ admits a Liouville filling which is a $g$AL structure, since it admits a Liouville filling induced by a linear Liouville pair by~\cite[Theorem 1.6]{Hoz24}.
\end{rem}

We now state the main theorem of this section. 

\begin{thm} \label{thm:gcontract}
    The space of $g$AL structures supporting $\Phi$ is contractible. 
\end{thm}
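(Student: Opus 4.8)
The plan is to prove contractibility of $g\mathcal{AL}_\Phi$ by exhibiting it as the total space of a fibration (or by a direct convex-combination deformation retraction) onto a space that is manifestly contractible, exploiting the fact that the two defining conditions decouple nicely. First I would analyze the structure of the space: a point of $g\mathcal{AL}_\Phi$ is a Liouville form $\lambda$ on $[-1,1]\times M$ that (i) fills a bicontact structure supporting $\Phi$ and (ii) satisfies the $u$-taming condition $\omega\wedge d\alpha_u>0$ for some $u$-defining form $\alpha_u$. The key observation, emphasized in the excerpt, is that condition (ii) depends only on $\omega=d\lambda$ and not on $\lambda$ itself, and that the set of admissible $\alpha_u$ is convex. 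I would first show that the set of $u$-taming \emph{symplectic forms} is convex (or at least contractible): if $\omega_0\wedge d\alpha_u^0>0$ and $\omega_1\wedge d\alpha_u^1>0$, then for the convex combination $\omega_r=(1-r)\omega_0+r\omega_1$ one can take $\alpha_u^r=(1-r)\alpha_u^0+r\alpha_u^1$, since the space of $u$-defining forms is convex and the taming inequality is convex in the pair $(\omega,\alpha_u)$.

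The heart of the argument is to separate the \emph{symplectic} data from the \emph{Liouville primitive} data. Given $\omega$, the space of Liouville primitives $\lambda$ with $d\lambda=\omega$ and $\lambda$ restricting to a contact form of the correct sign on each boundary component is an affine space modeled on closed $1$-forms subject to open (hence convex-stable) boundary conditions; I would argue this fiber is convex, hence contractible. The remaining subtlety is condition (i): I must ensure that throughout any deformation the induced bicontact structure continues to support $\Phi$, i.e., that $\ker\lambda_{|\{-1\}\times M}$ and $\ker\lambda_{|\{+1\}\times M}$ remain transverse contact structures of opposite sign whose intersection line field generates (a reparametrization of) $X$. Here I would use that the $u$-taming condition controls the boundary behavior enough to keep the Liouville vector field positively transverse to the boundary and to pin the contact planes in the non-twisting cone $\{d\alpha_u>0\}$ described in the remark, so that the flow direction $\xi_-\cap\xi_+$ is preserved.

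Concretely I would set up a deformation retraction onto a distinguished standard slice. Fix a reference gAL structure $\lambda_{\mathrm{std}}$ coming from the standard Mitsumatsu construction (a linear Liouville pair, which lies in $g\mathcal{AL}_\Phi$ by Example~\ref{ex:interp} and the remark citing \cite[Theorem 1.6]{Hoz24}). For an arbitrary $\lambda\in g\mathcal{AL}_\Phi$, I would build an explicit path: first straight-line homotope the symplectic forms $d\lambda_r=(1-r)\,d\lambda+r\,d\lambda_{\mathrm{std}}$, which stays $u$-taming and Liouville-filling by the convexity just established, simultaneously dragging along compatible primitives; then within the now-fixed symplectic form, convexly interpolate the primitives to $\lambda_{\mathrm{std}}$. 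Continuity of this construction in $\lambda$ — and the verification that every intermediate form remains a genuine Liouville filling of a \emph{supporting} bicontact structure — gives the contraction. I expect the main obstacle to be precisely this last verification: maintaining simultaneously the nondegeneracy (symplectic condition), the boundary contact/transversality conditions, and the \emph{supporting} property (that the plane-field intersection still integrates to the oriented flow line field of $\Phi$) along the entire homotopy. The $u$-taming cone condition is the essential tool that makes these compatible, and the delicate point is to phrase all three as simultaneously open, convex-stable conditions so that straight-line and affine-fiber homotopies never exit $g\mathcal{AL}_\Phi$.
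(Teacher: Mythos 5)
There is a genuine gap at the central step of your argument, namely the claimed convexity. The $u$-taming condition $\omega\wedge d\alpha_u>0$ is \emph{bilinear} in the pair $(\omega,\alpha_u)$, not convex: for $\omega_r=(1-r)\omega_0+r\omega_1$ and $\alpha_u^r=(1-r)\alpha_u^0+r\alpha_u^1$ the product $\omega_r\wedge d\alpha_u^r$ contains the cross terms $r(1-r)\left(\omega_0\wedge d\alpha_u^1+\omega_1\wedge d\alpha_u^0\right)$, which are uncontrolled. (What is true, and what the paper uses, is convexity in $\alpha_u$ for \emph{fixed} $\omega$.) Worse, even if you arrange a common taming form, the straight-line interpolation of symplectic forms is not nondegenerate in general: $\omega_r\wedge\omega_r$ contains the term $2r(1-r)\,\omega_0\wedge\omega_1$, which can be negative. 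So the step ``$d\lambda_r=(1-r)d\lambda+r\,d\lambda_{\mathrm{std}}$ stays $u$-taming and Liouville-filling by the convexity just established'' fails, and this is precisely the difficulty the theorem has to overcome, not a formality.

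The paper's resolution is different and is worth internalizing: given $(\omega,\lambda)$ with taming form $\alpha_u$, one first replaces $\lambda$ and the base point $\lambda_0$ by $\lambda+\kappa\alpha_u$ and $\lambda_0+\kappa\alpha_u$ for $\kappa\gg0$. Then along the linear interpolation one gets
\begin{align*}
\omega^\kappa_\tau\wedge\omega^\kappa_\tau=\bigl((1-\tau)\omega_0+\tau\omega\bigr)^{\wedge2}+2\kappa\bigl((1-\tau)\omega_0+\tau\omega\bigr)\wedge d\alpha_u,
\end{align*}
and the $\kappa$-term, which is positive because both $\omega$ and the standard $\omega_0$ are tamed by $\alpha_u$, dominates the possibly negative quadratic term. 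This is the whole point of including the $u$-taming condition in the definition of a $g$AL structure. In addition, your sketch does not handle the boundary: the paper needs a separate preliminary lemma (surjectivity of $g\mathcal{AL}^*_\Phi\hookrightarrow g\mathcal{AL}_\Phi$ on homotopy groups, proved by straightening the Liouville vector field near $\partial([-1,1]\times M)$ and normalizing the boundary contact forms) so that the Reeb-adaptedness inequality $\pm\,\alpha_u\wedge d\alpha_\pm>0$ is available; this inequality is what guarantees that the restrictions of $\lambda^\kappa_\tau$ to $\{\pm1\}\times M$ remain contact of the correct sign, a point your ``affine fiber of primitives'' argument leaves unaddressed. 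Your decomposition into symplectic data plus primitives, and the appeal to the non-twisting cone, do not by themselves produce these estimates.
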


We split into two steps summarized in the following technical lemmas, whose proofs are deferred to the next section.

\begin{lem} \label{lem:gsurjpin}
    The inclusion $g\mathcal{AL}^*_\Phi \subset g\mathcal{AL}_\Phi$ induces surjections on all homotopy groups.
\end{lem}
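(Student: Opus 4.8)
The plan is to prove surjectivity on homotopy groups by showing that any sphere of gAL structures can be homotoped, rel nothing, into the Reeb-adapted subspace $g\mathcal{AL}^*_\Phi$. The key observation is that the Reeb-adapted condition is a condition on the Reeb vector fields $R_\pm$ induced by $\lambda$ on the two boundary components $\{\pm 1\}\times M$, namely that $R_+$ (resp.~$R_-$) be positively (resp.~negatively) transverse to the weak-stable bundle $E^{ws}=\ker\alpha_u$. Since $\alpha_s(R)$ controls transversality to $E^{ws}$ (up to the flow direction), the strategy is to push each boundary contact form toward a model for which this transversality is manifest, without leaving the space of gAL structures. Crucially, the $u$-taming condition~\eqref{eq:alphau} depends only on $\omega=d\lambda$, so any homotopy of $\lambda$ that fixes $\omega$ (i.e.~modifies $\lambda$ by closed $1$-forms) automatically preserves condition~(2); this gives us a large supply of deformations to work with.

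First I would recall, following the discussion of Reeb vector fields in Section~\ref{sec:Mit} and~\cite[Section 3.2]{Mas23}, that for the standard construction the induced Reeb fields are automatically Reeb-adapted, since $\alpha_s(R_-)>0$ and $\alpha_s(R_+)<0$ there. So $g\mathcal{AL}^*_\Phi$ is nonempty and contains a natural basepoint. Given a continuous family $(\omega_x,\lambda_x)_{x\in S^k}$ of gAL structures parametrized by a sphere, I would first arrange, using that the space of $u$-defining $1$-forms satisfying~\eqref{eq:alphau} for a fixed $\omega$ is convex (hence contractible), a continuous choice of witnessing $\alpha_u=\alpha_u(x)$ over the parameter sphere. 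Then the transversality data of $R_\pm(x)$ to $E^{ws}(x)=\ker\alpha_u(x)$ varies continuously in $x$. The goal is to deform the family so that these transversality conditions hold uniformly.

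The mechanism for the deformation is to modify $\lambda$ on collar neighborhoods of the two boundary components by adding exact or closed $1$-forms, thereby adjusting the restriction $\lambda_{\vert\{\pm1\}\times M}$ (and hence the Reeb fields $R_\pm$) while keeping $\omega$ fixed and keeping the restrictions contact. Concretely, near $\{+1\}\times M$ one writes $\lambda$ in a collar and interpolates the boundary contact form toward one whose Reeb field is positively transverse to $E^{ws}$; the contact and Liouville conditions are open, so a sufficiently small such push preserves them, and the whole deformation can be made continuous in the sphere parameter $x$ since all the conditions involved are open and the starting family is compact. The $u$-taming condition survives automatically because $\omega$ is untouched. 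Because the target transversality conditions are open and the space of contact forms with a prescribed transversality property is itself connected in the relevant sense (the Reeb field can be steered using the freedom in choosing the boundary contact form within its isotopy class), the homotopy can be completed within $g\mathcal{AL}_\Phi$ and lands in $g\mathcal{AL}^*_\Phi$.

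The main obstacle I anticipate is ensuring that the deformation of the boundary Reeb fields into the Reeb-adapted cone can be carried out \emph{simultaneously and continuously over the whole parameter sphere} while preserving both the global Liouville condition and the bicontact condition~(1) of Definition~\ref{def:gAL}. Transversality of $R_\pm$ to $E^{ws}$ is an open condition, so the difficulty is not rigidity but producing an honest path: one must verify that the collar modification does not destroy nondegeneracy of $\omega$ in the interior and that the deformed $\lambda$ remains a genuine Liouville filling of a \emph{supporting} bicontact structure throughout. I expect this to reduce to a partition-of-unity/openness argument combined with the convexity of the relevant choices, but the bookkeeping of keeping all three conditions (filling, $u$-taming, Reeb-adapted) open simultaneously over $S^k$ is where the real work lies.
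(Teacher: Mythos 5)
There is a genuine gap, and it lies at the heart of your proposed mechanism. You plan to adjust the boundary contact forms, and hence the Reeb fields $R_\pm$, by adding closed $1$-forms to $\lambda$ so that $\omega$ is untouched and the $u$-taming condition survives for free. But adding a closed $1$-form $\beta$ to a contact form $\alpha$ does not change the direction of the Reeb vector field at all: since $d(\alpha+\beta)=d\alpha$, the new Reeb field must lie in $\ker d\alpha=\langle R_\alpha\rangle$, and the normalization gives $R_{\alpha+\beta}=\big(1+\beta(R_\alpha)\big)^{-1}R_\alpha$, a positive rescaling. Transversality of $R_\pm$ to $E^{ws}$ with a prescribed sign is invariant under positive rescaling, so deformations of $\lambda$ by closed $1$-forms can never produce Reeb-adaptedness if it fails at the start. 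Any deformation that actually steers the Reeb direction must change the boundary contact form in an essential way, hence change $\omega$ near the boundary, and then the $u$-taming condition is no longer automatic and must be re-verified --- precisely the bookkeeping you hoped to avoid.

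The second, related, gap is that the required deformation of the boundary contact forms is not small, so openness alone does not produce the homotopy. The paper's proof first straightens the Liouville vector field to $t\partial_t$ in a collar, then uses the contractibility of the space of bicontact structures supporting $\Phi$ (from~\cite[Theorem 4.4]{Mas23}) to choose a \emph{path} of supporting contact forms from $\alpha_\pm$ to fixed Reeb-adapted models $\alpha^1_\pm$, and grafts this path into the collar via an interpolation of the form $e^{v(t)}\widehat{\alpha}_+$ with $v'=A$ large. The large slope $A$ is what makes the $dt\wedge(\cdot)$ terms dominate, so that both the Liouville condition and the $u$-taming condition hold along the whole neck; this is a quantitative estimate, not an openness statement. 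Your outline correctly identifies where the difficulty sits, but the two tools needed to resolve it --- the contractibility of the space of supporting bicontact structures, and the exponential neck-stretching estimate --- are absent, and the one concrete mechanism you do propose (closed-form deformations) provably cannot work.
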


\begin{lem} \label{lem:gcontract*}
    The space $g\mathcal{AL}^*_\Phi$ is weakly contractible.
\end{lem}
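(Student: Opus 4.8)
The plan is to prove Lemma~\ref{lem:gcontract*} by exhibiting an explicit deformation retraction of $g\mathcal{AL}^*_\Phi$ onto a single, ``standard'' Reeb-adapted gAL structure, using the observation (emphasized in the remark after Definition~\ref{def:gAL}) that the $u$-taming condition depends only on $\omega$ and defines a \emph{convex} constraint. First I would fix a reference defining pair $(\alpha_s, \alpha_u)$ for $\Phi$ and a reference gAL structure $\lambda_0$ (e.g.\ the truncated linear or exponential Liouville pair produced by the standard construction, which is Reeb-adapted by the computations recalled in Section~\ref{sec:Mit}). The goal is to contract an arbitrary $(\omega, \lambda) \in g\mathcal{AL}^*_\Phi$ to $\lambda_0$ through a path staying inside $g\mathcal{AL}^*_\Phi$.

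The key point is that all four defining properties of $g\mathcal{AL}^*_\Phi$ are \emph{convex} or can be made so after decoupling $\omega$ from $\lambda$. Concretely, I would first contract the symplectic form: since the $u$-taming condition $\omega \wedge d\alpha_u > 0$ is a pointwise open convex condition on $\omega$ (for a fixed $\alpha_u$, the set of $2$-forms satisfying it is a half-space), and nondegeneracy together with the boundary contact conditions are preserved along affine combinations when the forms are suitably normalized, the straight-line path $\omega_r = (1-r)\omega + r\,\omega_0$ should remain symplectic and $u$-tamed provided one argues nondegeneracy carefully. The subtlety is that an affine path of closed $2$-forms need not be nondegenerate throughout, so rather than interpolating $\omega$ directly I would interpolate the \emph{primitives}: set $\lambda_r = (1-r)\lambda + r\lambda_0$, exploiting that $\lambda$ and $\lambda_0$ restrict to the same (or deformation-equivalent) bicontact data on the boundary, and that $\omega_r = d\lambda_r$ inherits $u$-taming automatically from Example~\ref{ex:interp}-type computations since $d\lambda_r \wedge d\alpha_u = (1-r)\,\omega\wedge d\alpha_u + r\,\omega_0 \wedge d\alpha_u > 0$ by convexity. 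Thus $u$-taming is \emph{free} along the entire linear homotopy, and the only thing to check is that $d\lambda_r$ remains nondegenerate and that $\lambda_r$ remains a Liouville form (Liouville vector field transverse to the boundary, filling the correct bicontact structure).

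The hard part will be ensuring that the interpolated $\omega_r = d\lambda_r$ stays \emph{nondegenerate} throughout, and that the boundary contact structures and the Reeb-adapted transversality conditions $\alpha_u(R_\pm)$-type inequalities are preserved. Nondegeneracy of $\omega_r$ is not automatic from convexity of the $u$-taming cone alone, so I would need to show that $u$-taming, combined with the fixed supporting bicontact structure (which pins down $\ker\iota_{\partial_t}\omega_r = \xi_t$ staying inside the cone $\{d\alpha_u > 0\}$, as in the final interpretation in the remark), forces $\omega_r \wedge \omega_r > 0$. The cleanest route is the ``family of $1$-forms and closed $2$-forms'' description from that remark: writing $\omega_r \leftrightarrow (\alpha_t^r, \omega_t^r)$, nondegeneracy is equivalent to $\alpha_t^r \wedge \omega_t^r > 0$ pointwise, and both the $u$-taming condition and the convex combination should keep this positive because the plane fields $\xi_t^r$ do not twist out of the cone. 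For the boundary and Reeb-adapted conditions, I would note these are again open conditions that hold at both endpoints $r=0,1$ and, being transversality statements determined by $\lambda_r|_{\partial}$ which interpolates contact forms for the \emph{same} bicontact structure, persist along the path (shrinking to a smaller neighborhood of the reference if necessary, or invoking Gray--Moser stability for the Liouville-domain statement quoted in the excerpt). Once the path $\lambda_r$ is shown to lie in $g\mathcal{AL}^*_\Phi$, the same construction applied fiberwise over a parameter sphere $S^n$ contracts any map $S^n \to g\mathcal{AL}^*_\Phi$ to the constant map at $\lambda_0$, proving that all homotopy groups vanish and hence that $g\mathcal{AL}^*_\Phi$ is weakly contractible.
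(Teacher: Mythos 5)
There is a genuine gap at the step you yourself flag as ``the hard part'': nondegeneracy of $\omega_r = d\lambda_r$ along the straight-line path $\lambda_r = (1-r)\lambda + r\lambda_0$. The $u$-taming condition~\eqref{eq:alphau} is linear in $\omega$, so it is indeed preserved under convex combination, but nondegeneracy $\omega_r \wedge \omega_r > 0$ is a \emph{quadratic} condition and is not convex; a convex combination of two symplectic forms, each $u$-tamed, need not be symplectic. Your proposed rescue --- that $u$-taming together with the cone condition on the plane fields $\xi_t = \ker \iota_{\partial_t}\omega_r$ forces $\alpha^r_t \wedge \omega^r_t > 0$ --- does not work: the cone condition controls the sign of $\alpha_t \wedge d\alpha_u$, not of $\alpha_t \wedge \omega_t$, and there is no implication from $\omega \wedge d\alpha_u > 0$ to $\omega \wedge \omega > 0$ (scale a degenerate tamed form). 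The same problem afflicts the boundary contact condition and the Reeb-adaptedness inequalities: openness at the two endpoints $r = 0, 1$ does not propagate along the path, and you cannot assume $\lambda$ and $\lambda_0$ induce the same boundary contact forms, since distinct elements of $g\mathcal{AL}^*_\Phi$ may fill distinct bicontact structures supporting $\Phi$.

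The missing idea is to first translate both Liouville forms by a large multiple of $\alpha_u$: set $\lambda^\kappa = \lambda + \kappa\alpha_u$ and $\lambda^\kappa_0 = \lambda_0 + \kappa\alpha_u$. Since $\alpha_u$ is pulled back from $M$, one has $d\alpha_u \wedge d\alpha_u = 0$, so along the linear interpolation $\lambda^\kappa_\tau = (1-\tau)\lambda_0 + \tau\lambda + \kappa\alpha_u$ the top power is $\omega^\kappa_\tau \wedge \omega^\kappa_\tau = \bigl((1-\tau)\omega_0 + \tau\omega\bigr)^{\wedge 2} + 2\kappa\bigl((1-\tau)\omega_0 + \tau\omega\bigr)\wedge d\alpha_u$; the first term is bounded and possibly negative, while the second is $2\kappa$ times a strictly positive quantity precisely because the $u$-taming condition is convex. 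Hence for $\kappa$ large (uniformly in $\tau$, and uniformly over a compact parameter sphere $S^n$) the interpolation stays symplectic, and analogous explicit computations, using $\pm\alpha_\pm \wedge d\alpha_u > 0$ together with the Reeb-adaptedness inequality $\pm\alpha_u \wedge d\alpha_\pm > 0$, show that the boundary restrictions remain contact and Reeb-adapted for $\kappa$ large. One then concatenates the three linear segments $\lambda_0 \rightsquigarrow \lambda^\kappa_0 \rightsquigarrow \lambda^\kappa \rightsquigarrow \lambda$ (the outer two are handled by the same $\kappa$-dominance computation) and smooths the resulting $\mathcal{C}^1$ path. Without this translation trick --- which is exactly where the $u$-taming hypothesis earns its keep --- the linear homotopy you propose cannot be shown to stay inside $g\mathcal{AL}^*_\Phi$.
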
 

\begin{proof}[Proof of Theorem~\ref{thm:gcontract}]
    Lemma~\ref{lem:gsurjpin} and Lemma~\ref{lem:gcontract*} imply that $g\mathcal{AL}_\Phi$ is weakly contractible. Moreover, it is an open subset of a Fr\'{e}chet space, the space of smooth $1$-forms $\lambda$ on $[-1,1] \times M$ such that the flow $\Phi$ is contained in $\ker \lambda$ on $\{\pm1\} \times M$ (a closed subspace of a Fr\'{e}chet space), hence is homotopy equivalent to a CW complex (see~\cite[Section 4]{Mas23} for a complete argument). Whitehead theorem then implies that $g\mathcal{AL}_\Phi$ is contractible.
\end{proof}

With Example~\ref{ex:interp} in mind, Theorem~\ref{thm:gcontract} readily implies:

\begin{cor}
    Any two (linear or exponential) Liouville pairs supporting a given (semi-)Anosov flow give rise to equivalent Liouville structures on $[-1,1]\times M$.
\end{cor}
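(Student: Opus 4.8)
The plan is to prove the final statement, the corollary asserting that any two (linear or exponential) Liouville pairs supporting a given (semi-)Anosov flow $\Phi$ yield equivalent Liouville structures on $[-1,1] \times M$. The key observation is that Theorem~\ref{thm:gcontract} establishes that the space $g\mathcal{AL}_\Phi$ of generalized AL structures supporting $\Phi$ is \emph{contractible}, and in particular path-connected. So the entire task reduces to exhibiting, for each Liouville pair of either flavor, a corresponding point in $g\mathcal{AL}_\Phi$, and then invoking the stability of Liouville structures under homotopy.

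First I would verify that a Liouville structure arising from a (linear or exponential) Liouville pair supporting $\Phi$ genuinely defines an element of $g\mathcal{AL}_\Phi$. Condition (1) of Definition~\ref{def:gAL}, that $(\omega, \lambda)$ is a Liouville filling of a supporting bicontact structure, is immediate from the standard construction of Section~\ref{sec:Mit}: the pair $(\alpha_-, \alpha_+)$ has kernels forming a bicontact structure $(\xi_-, \xi_+)$ whose intersection is $\langle X \rangle$. For the exponential case on $\R \times M$, one first truncates to $[-A, A] \times M$ and rescales the interval to $[-1,1]$, which does not affect the exact symplectomorphism class after completion. The $u$-taming condition (2), namely $\omega \wedge d\alpha_u > 0$, is precisely the content verified in Example~\ref{ex:interp}: both the linear interpolation $(t-1)\alpha_- + (t+1)\alpha_+$ and the exponential interpolation $e^{-s}\alpha_- + e^s \alpha_+$ are instances of the family $\lambda_{(\chi_\pm, \alpha_\pm)}$ with $\pm \partial_t \chi_\pm > 0$, so the computation $d\lambda \wedge d\alpha_u > 0$ given there applies verbatim (using that $\pm \alpha_\pm \wedge d\alpha_u > 0$, which follows from $\alpha_\pm = \alpha_u \pm \alpha_s$ together with $d\alpha_u|_{E^{wu}} > 0$).

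Having placed both Liouville structures inside $g\mathcal{AL}_\Phi$, I would then use contractibility. Since $g\mathcal{AL}_\Phi$ is contractible by Theorem~\ref{thm:gcontract}, it is in particular path-connected, so any two such Liouville structures $(\omega_0, \lambda_0)$ and $(\omega_1, \lambda_1)$ are joined by a path $(\omega_t, \lambda_t)_{t \in [0,1]}$ of Liouville structures on $[-1,1] \times M$. The stability result for Liouville domains recalled in the excerpt (obtained by combining Gray and Moser stability) then furnishes an isotopy $(\varphi_t)$ with $\varphi_t : ([-1,1] \times M, \lambda_0) \to ([-1,1] \times M, \lambda_t)$ an exact symplectomorphism for every $t$. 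Taking $t = 1$ yields the desired equivalence between the two Liouville structures.

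The main obstacle is not in the corollary itself, which is essentially a formal consequence, but in ensuring the reduction is clean across the linear/exponential divide. The subtlety flagged repeatedly in the excerpt is that linear and exponential Liouville pairs are \emph{genuinely different notions}, so one cannot directly compare a linear pair's structure with an exponential pair's structure as literal $1$-forms; the point of passing through $g\mathcal{AL}_\Phi$ is precisely that it contains \emph{both} as special cases and is connected, allowing a homotopy that need not preserve either the linear or the exponential form along the way. I would therefore take care to state that the resulting Liouville homotopy generally leaves the classes of linear and exponential pairs, travelling through the larger space of gAL structures — exactly as anticipated in the discussion following Theorem~\ref{thm:ALcontr}. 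A minor technical point to address is the $\mathcal{C}^1$ versus smooth regularity of the defining forms; here I would invoke the approximation methods of~\cite{Hoz24, Mas23} already used throughout, noting that Definition~\ref{def:gAL} was explicitly set up to make sense for merely $\mathcal{C}^1$ Liouville forms.
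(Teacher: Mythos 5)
Your proposal is correct and follows essentially the same route as the paper: the corollary is stated as an immediate consequence of Theorem~\ref{thm:gcontract} together with Example~\ref{ex:interp}, which is exactly the reduction you carry out (placing both linear and exponential pairs, after truncation, inside $g\mathcal{AL}_\Phi$ and invoking contractibility plus Liouville stability). Your additional remarks on the linear/exponential distinction and the regularity issues match the paper's own caveats.
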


While the $u$-taming condition of Definition~\ref{def:gAL} is crucial in our proof, we ask:

\begin{question}
If $\big([-1,1] \times M, \lambda \big)$ is a Liouville filling of a bicontact structure $(\xi_-, \xi_+)$ supporting an Anosov (or semi-Anosov) flow $\Phi$, is it Liouville homotopic to a $g$AL structure supporting $\Phi$?
\end{question}

Note that a given pair of oppositely oriented contact structures $(\xi_-, \xi_+)$ might admit different Liouville fillings of the form $[-1,1] \times M$ which are homotopic to $g$AL structures supporting \emph{different} Anosov flows. For instance, $(\xi_-, \xi_+)$ might be a bicontact structure supporting an Anosov flow $\Phi$ such that there exists another contact structure $\xi'_-$, contact homotopic to $\xi_-$, such that $(\xi'_-, \xi_+)$ is a bicontact structure supporting another Anosov flow $\Phi'$. In this situation, we do not know if $\Phi$ and $\Phi'$ are orbit-equivalent, see Question~\ref{quest:orbit4} above. However, if $\Phi$ and $\Phi'$ are orbit equivalent, then Theorem~\ref{thm:orbiteq} above implies that the $g$AL structures supporting $\Phi$ and $\Phi'$ are all exact symplectomorphic to each other.

On the other end, an affirmative answer to this question would imply that the following one also holds:

\begin{question}
    Let $\Phi$, $\Phi'$ be two Anosov flows on $M$ supporting bicontact structures $(\xi_-, \xi_+)$ and $(\xi'_-, \xi'_+)$, respectively, such that $\xi_\pm$ and $\xi'_\pm$ are contact homotopic. Are the $g$AL structures supporting $\Phi$ and $\Phi'$ Liouville homotopic to each other?
\end{question}

\begin{rem}
    There exists many closed $3$-manifolds $M$ such that $[-1,1] \times M$ admits a Liouville structure, but $M$ does not admit an Anosov flow. Liouville structures on $[-1,1] \times M$ can be constructed from suitable taut foliations that we call \emph{hypertaut} in~\cite{Mas24}. Taut foliations on rational homology spheres are automatically hypertaut.\footnote{Here, `foliation' means $\mathcal{C}^0$-foliation and `taut' means everywhere taut, see~\cite{CKR19}.} For instance, Bin Yu showed in~\cite{Yu23} that every non-integral surgery on the figure-eight knot in $S^3$ yields a rational homology sphere which does not admit Anosov flows. However, every nontrivial surgery on the figure-eight knot admits a taut foliation by the work of Gabai.
\end{rem}

        \subsection{Proofs of technical lemmas}

We now turn to the proofs of the two lemmas used in the proof of Theorem~\ref{thm:gcontract}.

\begin{proof}[Proof of Lemma~\ref{lem:gsurjpin}]
Let $(\alpha^1_-, \alpha^1_+)$ be a pair of contact forms defining a bicontact structure supporting $\Phi$ and whose Reeb vector fields $R^1_\pm$ are transverse to $E^{ws}$ with the correct signs, as in the definition of Reeb-adaptedness.

We first show that every $(\omega, \lambda) \in g\mathcal{AL}_\Phi$ is homotopic within $g\mathcal{AL}_\Phi$ to a $g$AL structure $(\omega_1, \lambda_1)$ whose boundary contact \emph{forms} are $\alpha^1_\pm$, which guarantees Reed-adaptedness. This shows that the map $\pi_0\left( g\mathcal{AL}^*_\Phi \right) \rightarrow \pi_0\left( g\mathcal{AL}_\Phi \right)$ induced by the inclusion is surjective. We later explain how to adapt the proof to show surjectivity on $\pi_n$ for $n \geq 1$.

We deform $\lambda$ near $\{\pm1\} \times M$ in two steps.
\begin{itemize}[leftmargin=*]
    \item \textit{Step 1: straightening the Liouville vector field near the boundary.} Since $\lambda$ is a Liouville form, its Liouville vector field $Z$ is positively transverse to the boundary of $[-1,1] \times M$. We deform $\lambda$ near this boundary so that the new Liouville vector field coincides with $t \partial_t$ near $\{\pm1\} \times M$. Some care is needed to ensure that this deformation stays within $g$AL structures, but the strategy is standard.

    Let us deform $\lambda$ near $\{1\} \times M$, the deformation near $\{-1\} \times M$ being similar. Writing $\alpha_+ = \lambda_{\left\vert \{1\} \times M \right.}$, there exist $\epsilon > 0$ and a $1$-form $\mu$ such that 
    $$\lambda = t\alpha_+ + \mu$$
    on $N_\epsilon \coloneqq (1-\epsilon, 1] \times M$, and $\mu$ vanishes on $\{1\} \times M$. We can further write $$\mu = (t-1) \delta$$ for some $1$-form $\delta$ on $N_\epsilon$, and the condition that $\lambda$ is symplectic near $\{1\} \times M$ is equivalent to
    \begin{align*}
        (\alpha_+ + \delta_+) \wedge d\alpha_+ > 0,
    \end{align*}
    where $\delta_+ \coloneqq \delta_{\left\vert \{1\} \times M\right.}$. Let us define a smooth function $f_+ : M \rightarrow \R_{>0}$ by
    $$(\alpha_+ + \delta_+) \wedge d\alpha_+ = f_+ \, \alpha_+ \wedge d\alpha_+.$$

    Similarly, the $u$-taming condition near $\{1\} \times M$ is equivalent to
    \begin{align*}
        (\alpha_+ + \delta_+) \wedge d\alpha_u > 0,
    \end{align*}
    and we define a continuous function $f_u : M \rightarrow \R_{>0}$ by
    $$(\alpha_+ + \delta_+) \wedge d\alpha_u = f_u \, \alpha_+ \wedge d\alpha_+.$$
    
    We set
    \begin{align*}
        m_0 &\coloneqq \mathrm{min}\big\{ \mathrm{min}(f_+), \mathrm{min}(f_u), 1/2\big\}, \\
        \kappa_0 &\coloneqq \frac{1}{1-m_0} > 0.
    \end{align*}
    Let $0 < \epsilon_0 < \epsilon$ to be chosen sufficiently small later, and let $\varphi : (1-\epsilon, 1] \rightarrow (-\epsilon, 0]$ be a smooth function satisfying:
    \begin{enumerate}
        \item $\forall t \in (1-\epsilon, 1-\epsilon_0]$, $\varphi(t) = t -1 $,
        \item $\forall t \in (1-\epsilon, 1]$, $0 \leq \varphi'(t) \leq 1 + \kappa_0$,
        \item $\forall t \in [1-\frac{\epsilon_0}{2}, 1]$, $\varphi(t) = 0$.
    \end{enumerate}

We define a $1$-form $\widetilde{\lambda}_+$ on $N_\epsilon$ by
$$\widetilde{\lambda}_+ \coloneqq t \alpha_+ + \varphi(t) \delta,$$
so that $\lambda$ and $\widetilde{\lambda}_+$ coincide on $N_\epsilon \setminus N_{\epsilon_0}$. Moreover, $\widetilde{\lambda}_+$ coincides with $ t \alpha_+$ on $N_{\frac{\epsilon_0}{2}}$, which is a Liouville form with Liouville vector field $t\partial_t$. We claim that for $\epsilon_0$ small enough, $\widetilde{\omega}_+ \coloneqq d\widetilde{\lambda}_+$ is symplectic and satisfies $\widetilde{\omega}_+ \wedge d\alpha_u > 0$. It is enough to show these properties on $N_{\epsilon_0}$.

A straightforward computation shows that
\begin{align*}
    \widetilde{\omega}_+ \wedge \widetilde{\omega}_+ &= 2 tdt \wedge \big( \alpha_+ + \varphi' \delta_+\big)\wedge d\alpha_+ + \varphi \, \Theta_+,
\end{align*}
where $\Theta_+$ is a $4$-form depending on $\varphi$ which is bounded independently of $\varphi$ and $\epsilon_0$. More precisely, writing $\mathrm{dvol} = dt \wedge \alpha_+ \wedge d\alpha_+$ and $\Theta_+ = \theta_+ \, \mathrm{dvol}$, the function $\vert \theta_+ \vert$ is bounded by a constant $C_+$ independent of $\varphi$ and $\epsilon_0$. The previous equation becomes
$$\widetilde{\omega}_+ \wedge \widetilde{\omega}_+ = \left\{2t\big( 1 + \varphi' (f_+-1) \big) + \varphi \theta_+ \right\} \mathrm{dvol},$$
and our choice of $\varphi$ ensures that $t\big(1 + \varphi' (f_+ -1)\big) > \epsilon_+ > 0$ for some $\epsilon_+$ independent of $\epsilon_0$. Notice that on $(1-\epsilon_0, 1]$, $-\epsilon_0 < \varphi \leq 0$ so choosing $\epsilon_0$ such that $C_+ \epsilon_0 \ll \epsilon_+$ ensures that $\widetilde{\omega}_+ \wedge \widetilde{\omega}_+ > 0$ on $N_{\epsilon_0}$, as desired.

Similarly, we have
\begin{align*}
    \widetilde{\omega}_+ \wedge d\alpha_u = \left\{t\big( 1 + \varphi' (f_u-1) \big) + \varphi \theta_u\right\} \mathrm{dvol},
\end{align*}
where $\theta_u$ is a (continuous) function such that $\vert \theta_u \vert \leq C_u$ for a constant $C_u$ independent of $\varphi$ and $\epsilon_0$. Moreover, $t\big(1 + \varphi' (f_u -1)\big) > \epsilon_u > 0$ for some $\epsilon_u$ independent of $\epsilon_0$. Choosing $\epsilon_0$ so that $C_u \epsilon_0 \ll \epsilon_u$ guarantees that $\widetilde{\omega}_+ \wedge d\alpha_u > 0$ on $N_{\epsilon_0}$.

By symmetry, we can also construct a $1$-form $\widetilde{\lambda}$ on $[-1,-1+\epsilon) \times M$ which coincides with $\lambda$ on $[-1+\epsilon_0, -1+ \epsilon) \times M$ and with $t \alpha_- = t\lambda_{\left\vert \{-1\} \times M \right.}$ on $[-1, -1+ \frac{\epsilon_0}{2}]\times M$, and which satisfies that $\widetilde{\omega}_- \coloneqq d\widetilde{\lambda}_-$ is symplectic and $\widetilde{\omega}_- \wedge d\alpha_u > 0$. We then define a $1$-form $\widetilde{\lambda}$ on $[-1,1] \times M$ as 
$$\widetilde{\lambda} \coloneqq \begin{cases} \widetilde{\lambda}_- &\mathrm{on} \ [-1, -1+\epsilon) \times M,\\
\lambda &\mathrm{on}\ [-1+\epsilon, 1-\epsilon] \times M,\\
\widetilde{\lambda}_+ & \mathrm{on}\ (1-\epsilon, 1] \times M,
\end{cases}$$
which defines a $g$AL structure supporting $\Phi$ and whose Liouville vector field is $t\partial_t$ near the boundary of $[-1,1]\times M$.

Finally, we claim that the linear interpolation $\lambda_\tau \coloneqq (1-\tau) \lambda + \tau \widetilde{\lambda}$, $0 \leq \tau \leq 1$, induces a path of $g$AL structures supporting $\Phi$ (after possibly shrinking $\epsilon_0$ further). Obviously, $d\lambda_\tau \wedge d\alpha_u > 0$, and $\lambda_\tau$ restricts to $\alpha_\pm$ on $\{\pm1\} \times M$, so it suffices to show that $\omega_\tau \coloneqq d\lambda_\tau$ is symplectic. It is obvious on $[-1+\epsilon, 1-\epsilon] \times M$, and we show it on $N_\epsilon = (1-\epsilon, 1] \times M$, the case of $[-1, -1+\epsilon) \times M$ being identical.

On $N_\epsilon$, $\widetilde{\lambda}$ writes
$$\widetilde{\lambda} = t\alpha_+ + \varphi_\tau(t) \delta,$$
where $\varphi_\tau(t) = (1-\tau) (t-1) + \tau \varphi(t)$. Notice that $\varphi_\tau$ satisfies the conditions 1 and 2 in the definition of $\varphi$, and $\widetilde{\omega} \wedge \widetilde{\omega}$ is of the form
$$\widetilde{\omega} \wedge \widetilde{\omega} = 2tdt \wedge \big( \alpha_+ + \varphi'_\tau \delta_+\big)\wedge d\alpha_+ + \varphi_\tau \, \Theta_\tau,$$
where $\Theta_\tau$ is a $4$-form which is bounded independently of $\varphi$, $\tau$, and $\epsilon_0$. The same strategy as before guarantees that $\widetilde{\omega} \wedge \widetilde{\omega} > 0$ for $\epsilon_0$ small enough. This completes Step 1. Notice that the boundary contact structures remain unchanged during that step.

\item \textit{Step 2: modifying the boundary contact forms.} By Step 1, we can now assume that near $\{\pm1\} \times M$, $\lambda$ is of the form $t \alpha_\pm$. Choosing $\epsilon > 0$ so that the latter is satisfied on $\big([-1,-1+2\epsilon) \cup (1-2\epsilon, 1] \big) \times M$, we modify $\lambda$ in this region into a new Liouville form $\lambda_1$ inducing a $g$AL structure whose induced contact forms at the boundary components are $C \alpha^1_\pm$ for some constant $C > 0$. In particular, $\lambda_1$ is Reeb-adapted. Here as well, the strategy is quite standard.

Since the space of bicontact structures supporting $\Phi$ is contractible (see~\cite[Theorem 4.4]{Mas23}), we can find a path of contact forms $(\alpha^\kappa_-, \alpha^\kappa_+)$, $0 \leq \kappa \leq 1$, such that $\alpha^0_\pm = \alpha_\pm$ and whose kernels define bicontact structures supporting $\Phi$. We can further assume that this path is constant near $\kappa \in \{0,1\}$. Let us describe the modification on $N_{2\epsilon} = (1-2\epsilon ,1] \times M$, the other case being identical.

Let us define functions $f_+, f_u : [0,1]_\kappa\times M \rightarrow \R_{>0}$ and $g_+, g_u : [0,1]_\kappa \times M \rightarrow \R$ by
\begin{align*}
    \alpha^\kappa_+ \wedge d\alpha^\kappa_+ &= f_+(\kappa, \cdot \ ) \, \mathrm{dvol}_M, & \partial_\kappa \alpha^\kappa_+ \wedge d\alpha^\kappa_+ &= g_+(\kappa, \cdot \ ) \, \mathrm{dvol}_M,\\
    \alpha^\kappa_+ \wedge d\alpha_u &= f_u(\kappa, \cdot \ ) \, \mathrm{dvol}_M,& \partial_\kappa\alpha^\kappa_+ \wedge d\alpha_u &= g_u(\kappa, \cdot \ ) \, \mathrm{dvol}_M,
\end{align*}
where $\mathrm{dvol}_M \coloneqq \alpha_+ \wedge d\alpha_+$. We choose $A > 0$ satisfying
\begin{align} \label{eq:A}
        \epsilon A > \mathrm{max}\left\{ \frac{ \max\vert g_+ \vert}{\min f_+}, \frac{\vert \max g_u \vert }{\min f_u}\right\}.
\end{align}

Let $\widehat{\alpha}_+$ be the $1$-form on $N_{2\epsilon}$ defined by
$$\widehat{\alpha}_+(t,x) \coloneqq \begin{cases}
    \alpha_+(x) &\mathrm{if \ } t \in [1-2\epsilon , 1-\epsilon], \\
    \alpha^{(t-1+\epsilon)/\epsilon}_+(x) &\mathrm{if \ } t \in [1-\epsilon, 1].
\end{cases}$$
We further choose a smooth function $v : [1-2\epsilon, 1] \rightarrow \R$ satisfying
\begin{enumerate}
    \item $\forall t \in [1-2\epsilon, 1],$ $v'(t) > 0$,
    \item $\forall t \in [1-2\epsilon, 1 - \frac{3\epsilon}{2}],$ $v(t) = \ln(t)$,
    \item $\forall t \in [1-\epsilon, 1],$ $v(t) = A(t-1+\epsilon)$,
\end{enumerate}
and we define
$$\widehat{\lambda}_+ \coloneqq e^v \widehat{\alpha}_+$$
on $N_{2\epsilon}$.
Notice that $\widehat{\lambda}_+$ coincides with $\lambda$ on $N_{2\epsilon} \setminus N_{\frac{3\epsilon}{2}}$. We check that $\widehat{\omega}_+ \coloneqq d\widehat{\lambda}$ is symplectic and satisfies $\widehat{\omega}_+ \wedge d\alpha_u > 0$. It is automatic on $N_{2\epsilon} \setminus N_\epsilon$ by our choice of $v$. We then check it on $N_\epsilon$.

A straightforward computation shows that at a point $ p = (t,x)  \in N_\epsilon$,
$$\widehat{\omega}_+ \wedge \widehat{\omega}_+(p) = 2e^{2v(t)} \left( A f_+ + \frac{g_+}{\epsilon}\right)\!\big(\tau, x\big) \, dt \wedge \mathrm{dvol}_M > 0,$$
where $\tau = \frac{t-1+\epsilon}{\epsilon}$,
by our choice of $v$ and $A$ as in~\eqref{eq:A}, and
$$\widehat{\omega}_+ \wedge d\alpha_u(p) =  e^v\left( A f_u + \frac{g_u}{\epsilon} \right)\!(\tau,x) \,  dt\wedge \mathrm{dvol}_M > 0$$
as well. Notice that $\widehat{\lambda}_+$ restricts to $C \alpha^1_+$ on $\{1\} \times M$, for some constant $C > 0$.

We similarly construct a $1$-form $\widehat{\lambda}_-$ on $[-1, -1 + 2\epsilon) \times M$, and we define
$$\widehat{\lambda} \coloneqq \begin{cases} \widehat{\lambda}_- &\mathrm{on} \ [-1, -1+2\epsilon] \times M,\\
\lambda &\mathrm{on}\ [-1+2\epsilon, 1-2\epsilon] \times M,\\
\widehat{\lambda}_+ & \mathrm{on}\ [1-2\epsilon, 1] \times M,
\end{cases}$$
which induces a Reeb-adapted $g$AL structure supporting $\Phi$.

It is then easy to show that $(\omega,\lambda)$ and $\big(\widehat{\omega}, \widehat{\lambda}\big)$ are homotopic through $g$AL structures supporting $\Phi$. For $\tau\in [0,1]$, we define $\psi_\tau : [-1,1] \rightarrow [-1,1]$ by $\psi_\tau(t) \coloneqq (1-2\tau \epsilon)t$, and $\Psi_\tau : [-1,1] \times M \rightarrow [-1,1] \times M$ as $\psi_\tau \times \mathrm{id}_M$. Then, the path of $1$-forms 
$$\lambda_\tau \coloneqq 
\begin{cases}
    \Psi^*_{2\tau}\lambda &\mathrm{for \ } \tau \in [0, \frac{1}{2}], \\
    \Psi^*_{2(1-\tau)}\widehat{\lambda} & \mathrm{for \ } \tau \in [\frac{1}{2}, 1]
\end{cases}$$
provides the desired homotopy.
\end{itemize}

These two steps show that $(\omega, \lambda)$ is homotopic to a Reeb-adapted $g$AL structure  through $g$AL structures supporting $\Phi$. Let us now consider a family $(\omega_\sigma,\lambda_\sigma) \in g\mathcal{AL}_\Phi$, $\sigma \in S^n$, indexed over the $n$-sphere. We want to construct a homotopy $[0,1]\times S^n \rightarrow g\mathcal{AL}_\Phi$ from the chosen one on $\{0\} \times S^n$ to a family on $\{1\} \times S^n$ which belongs to $g \mathcal{AL}^*_\Phi$. For that, we first choose a (continuous) family of $u$-defining $1$-forms $\alpha^\sigma_u$ satisfying $\omega_\sigma \wedge d\alpha^\sigma_u > 0$; this is possible since for each fixed $\sigma$, the space of such $\alpha^\sigma_u$ is open and convex. We then apply the two previous steps to $(\omega_\sigma, \lambda_\sigma)$, by choosing the relevant parameters uniformly over $S^n$. Notice that we can impose the final boundary contact forms to be independent of $\sigma \in S^n$. The details are left to the reader.
\end{proof}

\begin{proof}[Proof of Lemma~\ref{lem:gcontract*}]
    Let us fixed a base point $(\omega_0, \lambda_0) \in g\mathcal{AL}^*_\Phi$ given by the standard linear construction of Section~\ref{sec:Mit}. We first show that every $(\omega, \lambda) \in g\mathcal{AL}^*_\Phi$ is homotopic to $(\omega_0, \lambda_0)$ within $g\mathcal{AL}^*_\Phi$. We then explain how to adapt the proof to show that $g\mathcal{AL}^*_\Phi$ is (weakly) contractible.

    Let $(\omega, \lambda) \in g\mathcal{AL}^*_\Phi$ be arbitrary, and let $\alpha_u$ be a $u$-defining $1$-form satisfying~\eqref{eq:alphau}. Notice that the Reeb-adaptedness condition is equivalent to 
    \begin{align} \label{eq:reebadapt}
        \pm \alpha_u \wedge d\alpha_\pm > 0. 
    \end{align}

    We claim that for every $\kappa > 0$, the $\mathcal{C}^1$ $1$-form
    $$\lambda^\kappa \coloneqq \lambda + \kappa \alpha_u$$
    defines a Reeb-adapted $g$AL structure supporting $\Phi$ of class $\mathcal{C}^1$. We need to check the following.

    \begin{itemize}
        \item \textit{$\omega^\kappa \coloneqq d\lambda^\kappa$ is symplectic and satisfies~\eqref{eq:alphau}.} We simply compute
            $$\omega^\kappa \wedge \omega^\kappa = \omega \wedge \omega + 2\kappa \, \omega \wedge d\alpha_u > 0$$
        by \eqref{eq:alphau}. Moreover, $\omega^\kappa \wedge d\alpha_u = \omega \wedge d\alpha_u> 0$ as desired.
        
        \item \textit{$\lambda^\kappa$ restricts to contact forms on $\{\pm1\} \times M$.} The restriction of $\lambda^\kappa$ to $\{\pm1\} \times M$ is simply $\alpha^\kappa_\pm \coloneqq \alpha_\pm + \kappa \alpha_u$. Since $\alpha_u \wedge d \alpha_u = 0$, we have:
        $$\alpha^\kappa_\pm \wedge d\alpha^\kappa_\pm = \alpha_\pm \wedge d\alpha_\pm + \kappa \langle \alpha_\pm, \alpha_u\rangle,$$
        where $\langle \alpha_\pm, \alpha_u\rangle \coloneqq \alpha_\pm \wedge d\alpha_u + \alpha_u \wedge d\alpha_\pm$. Notice that
        $$\alpha_\pm \wedge d\alpha_u (X, e_s, e_u) = \alpha_\pm(e_s) d\alpha_u(e_u, X) = -r_u \alpha_\pm(e_s)$$
        where $r_u >0$ and $\mp \alpha_\pm(e_s) >0$, hence
        $$\pm \alpha_\pm \wedge d\alpha_u > 0.$$
        Together with~\eqref{eq:reebadapt}, this implies that $\pm \langle \alpha_\pm, \alpha_u \rangle > 0$ hence $\pm \alpha^\kappa_\pm \wedge d\alpha^\kappa_\pm > 0$.
        
        \item \textit{$\lambda^\kappa$ is Reeb-adapted.} We trivially have $\pm \alpha_u \wedge d\alpha^\kappa_\pm = \pm \alpha_u \wedge d\alpha_\pm > 0$ by~\eqref{eq:reebadapt}.
    \end{itemize}

    We similarly define
    $$\lambda^\kappa_0 \coloneqq \lambda_0 + \kappa \alpha_u,$$
    which is also a Reeb-adapted $g$AL structure supporting $\Phi$ of class $\mathcal{C}^1$.

    We now claim that for $\kappa$ large enough and for every $\tau \in [0,1]$, 
    \begin{align*}
    \lambda^\kappa_\tau &\coloneqq (1-\tau) \lambda^\kappa_0 + \tau \lambda^\kappa \\
    &= (1-\tau) \lambda_0 + \tau \lambda + \kappa \alpha_u
    \end{align*}
    is a Reeb-adapted $g$AL supporting $\Phi$ of class $\mathcal{C}^1$. We adapt the previous steps.

    \begin{itemize}
        \item \textit{$\omega^\kappa_\tau \coloneqq d\lambda^\kappa_\tau$ is symplectic and satisfies~\eqref{eq:alphau}.} We compute
            \begin{align*}
                \omega^\kappa_\tau \wedge \omega^\kappa_\tau &= 2\left( (1-\tau) \omega_0 + \tau \omega \right)^{\wedge2} + 2 \kappa \left((1-\tau) \omega_0 + \tau \omega \right) \wedge d\alpha_u,
            \end{align*}
        which is positive for $\kappa$ large enough (and independent of $\tau$). The inequality $\omega^\kappa_\tau \wedge d\alpha_u > 0$ is obviously satisfied.
        
        \item \textit{$\lambda^\kappa_\tau$ restricts to contact forms on $\{\pm1\} \times M$.} Denote by $\alpha^{\kappa, \tau}_\pm$ the restriction of $\lambda^\kappa_\tau$ to $\{\pm1\} \times M$. Then $\alpha^{\kappa, \tau}_\pm \wedge d\alpha^{\kappa, \tau}_\pm$ is of the form
        $$\alpha^{\kappa, \tau}_\pm \wedge d\alpha^{\kappa, \tau}_\pm = \beta_\tau + \kappa\left((1-\tau) \langle \alpha^0_\pm, \alpha_u \rangle  + \tau \langle \alpha_\pm, \alpha_u\rangle \right),$$
        where $\beta_\tau$ is a $1$-form on $M$ independent of $\kappa$. We already showed earlier that $\pm \langle \alpha^0_\pm, \alpha_u \rangle > 0$ and $\pm \langle \alpha_\pm, \alpha_u \rangle > 0$, so $\pm \alpha^{\kappa, \tau}_\pm \wedge d\alpha^{\kappa, \tau}_\pm > 0$ for $\kappa$ large enough (independent of $\tau$).
        
        \item \textit{$\lambda^\kappa_\tau$ is Reeb-adapted.} We trivially have 
        $$\pm \alpha_u \wedge d\alpha^{\kappa,\tau}_\pm = \pm \alpha_u \wedge \left((1-\tau)d\alpha^0_\pm + \tau d\alpha_\pm\right)> 0$$
        by~\eqref{eq:reebadapt}.
    \end{itemize}

    To summarize, we obtain a path from $\lambda_0$ to $\lambda$ within $\mathcal{C}^1$ Reeb-adapted $g$AL structures supporting $\Phi$ by choosing $\kappa >0 $ large enough (depending on $\lambda_0$ and $\lambda$), and concatenating a linear interpolation from $\lambda_0$ to $\lambda^\kappa_0$, a linear interpolation from $\lambda^\kappa_0$ to $\lambda^\kappa$, and a linear interpolation from $\lambda^\kappa$ to $\lambda$. This path can easily be upgraded to a path of \emph{smooth} $g$AL structures in $g\mathcal{AL}^*_\Phi$ from $(\omega_0, \lambda_0)$ to $(\omega, \lambda)$ (one could either smooth $\lambda$ directly, or consider a suitable smoothing of $\alpha_u$ instead).

    Let us now consider a family $(\omega_\sigma,\lambda_\sigma) \in g\mathcal{AL}^*_\Phi$, $\sigma \in S^n$, indexed over the $n$-sphere. We want to extend it to the $(n+1)$-ball $D^{n+1}$. As before, we first choose a (continuous) family of defining $1$-forms $\alpha^\sigma_u$ satisfying $\omega_\sigma \wedge d\alpha^\sigma_u > 0$. Then, we choose a sufficiently large $\kappa >0$, independent on $\sigma$, such that for every $\sigma \in S^n$, the previous construction gives a path from $\lambda_0$ to $\lambda_\sigma$ within $g \mathcal{AL}^*_\Phi$. Here, we use that for every $\sigma \in S^n$, $\omega_0 \wedge d\alpha^\sigma_u > 0$. It can easily be upgraded to continuous map $D^{n+1} \rightarrow g \mathcal{AL}^*_\Phi$ restricting to the family $(\omega_\sigma, \lambda_\sigma)$, $\sigma \in S^n$, on $\partial D^{n+1}$. This shows that $g \mathcal{AL}^*_\Phi$ is weakly contractible.
\end{proof}

\printbibliography[heading=bibintoc, title={References}]
\end{document}